\documentclass[12pt,oneside]{amsart}
\usepackage{amsmath,amsfonts,amssymb,enumitem,amsthm,epsfig,url,array,tikz,tikz-cd}
\usepackage{mathrsfs,mathtools}
\usetikzlibrary{matrix,arrows,decorations.pathmorphing,positioning,calc}
\usepackage[a4paper, total={5.5in, 9in}]{geometry}

\usepackage[backref]{hyperref}

\theoremstyle{plain}
\newtheorem{theorem}{Theorem}[section]
\newtheorem{corollary}[theorem]{Corollary}
\newtheorem{proposition}[theorem]{Proposition}
\newtheorem{lemma}[theorem]{Lemma}

\theoremstyle{definition}
\newtheorem{example}[theorem]{Example}
\newtheorem{remark}[theorem]{Remark}
\newtheorem{definition}[theorem]{Definition}

\newcommand{\DF}{\mathrm{DF}}

\newcommand{\C}{\mathbb{C}}

\renewcommand{\H}{\mathcal{H}}
\newcommand{\K}{\mathcal{K}}
\newcommand{\M}{\mathcal{M}}
\newcommand{\N}{\mathcal{N}}
\renewcommand{\P}{\mathbb{P}}
\newcommand{\R}{\mathbb{R}}

\newcommand{\X}{\mathcal{X}}
\renewcommand{\L}{\mathcal{L}}
\newcommand{\V}{\mathcal{V}}

\newcommand{\Y}{\mathcal{Y}}
\newcommand{\tY}{\widetilde{\mathcal{Y}}}
\newcommand{\red}{\mathrm{red}}


\newcommand{\Aut}{\mathrm{Aut}}               

\newcommand{\Isom}{\mathrm{Isom}}
\newcommand{\pt}{\mathrm{pt}}

\newcommand{\Ric}{\mathrm{Ric}}
\newcommand{\tr}{\mathrm{tr}}                       

\renewcommand{\d}{\partial}
\newcommand{\db}{\bar{\partial}}

\newcommand{\GL}{\mathrm{GL}}

\renewcommand{\O}{\mathrm{O}}




\subjclass[2020]{53C55, 32Q26}
\begin{document}

\title{Geodesics in the space of relatively K{\"a}hler metrics}

\author{Michael Hallam}
\address{Department of Mathematics, Aarhus University, Ny Munkegade 118, DK-8000 Aarhus C,
Denmark.}
\email{hallam@math.au.dk}

\date{}

\begin{abstract}
	We derive the geodesic equation for relatively K{\"a}hler metrics on fibrations and prove that any two such metrics with fibrewise constant scalar curvature are joined by a unique smooth geodesic. We then show convexity of the log-norm functional for this setting along geodesics, which yields simple proofs of Dervan and Sektnan's uniqueness result for optimal symplectic connections and a boundedness result for the log-norm functional. Next, we associate to a fibration degeneration a unique geodesic ray defined on a dense open subset. Calculating the limiting slope of the log-norm functional along a globally defined smooth geodesic ray, we prove that fibrations admitting optimal symplectic connections are polystable with respect to a large class of fibration degenerations that are smooth over the base. We give examples of such degenerations in the case of projectivised vector bundles and isotrivial fibrations.
\end{abstract}

\maketitle

\section{Introduction}

A highly celebrated result in geometry is the Kobayashi--Hitchin correspondence, which states that a holomorphic vector bundle \(E\) on a compact K{\"a}hler manifold \((X,\omega)\) admits a Hermite--Einstein metric if and only if the bundle is polystable \cite{Don85,UY86}. When the K{\"a}hler class \([\omega]\) is the first Chern class of an ample line bundle \(L\), this reveals a deep link between differential and algebraic geometry. 

Much of modern geometry has centred on finding various analogues of the Kobayashi--Hitchin correspondence in different geometric contexts. One such iteration is the Yau--Tian--Donaldson conjecture \cite{Yau93,Tia97,Don02}, which says that a polarised manifold \((X,L)\) should admit a constant scalar curvature K{\"a}hler (cscK) metric in the ample class \(c_1(L)\) if and only if \((X,L)\) is K-polystable. There are also other examples, such as the J-equation \cite{LS15,Che19} and the deformed Hermitian Yang--Mills equation \cite{CY18,Che19}.

In this paper, we study an appropriate analogue of the Kobayashi--Hitchin correspondence for holomorphic fibrations \(\pi:X\to B\), which was formulated and conjectured in a recent series of works by Dervan--Sektnan \cite{DS_osc,DS_moduli,DS_uniqueness}. On the differential geometric side, the fundamental objects under consideration are relatively K{\"a}hler metrics on \(X\) with fibrewise constant scalar curvature, called \emph{relatively cscK metrics}. Dervan and Sektnan identify a suitable notion of canonical relatively cscK metric, called an \emph{optimal symplectic connection}, together with a notion of polystability for fibrations which they conjecture is equivalent to the existence of an optimal symplectic connection.

Dervan and Sektnan have also proven a partial result in one direction: they show that when the relatively K{\"a}hler class is the first Chern class of a relatively ample line bundle, existence of an optimal symplectic connection implies \emph{semistability} of the fibration \cite{DS_moduli}. The approach taken is an asymptotic analysis of Bergman kernels, analogous to Donaldson's proof that polarised cscK manifolds are K-semistable \cite{Don05}. In this paper we strengthen their result by instead analysing the geometry of the space of relatively cscK potentials \(\K_E\). Not only does this refine their result to \emph{polystability} with respect to a large class of fibration degenerations, but also the proof works in the transcendental K{\"a}hler setting, so extends their result in another direction. 

First, we derive a geodesic equation arising from a natural Riemannian structure on the space \(\K_E\), and then prove: \begin{theorem}
	Any two points of \(\K_E\) are joined by a unique smooth geodesic.
\end{theorem}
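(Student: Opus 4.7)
My plan is to exploit the strong restriction that the fibrewise cscK condition imposes on tangent vectors in order to reduce the derived geodesic equation on $\K_E$ to a substantially simpler, solvable problem. The first step is to characterise the tangent space $T_\phi \K_E$ explicitly by linearising the fibrewise cscK equation at $\phi$. This produces a fibrewise Lichnerowicz-type operator on each fibre $X_b$, and tangent vectors $\psi$ to $\K_E$ must lie in its kernel pointwise in $b \in B$. By the standard theory of cscK metrics, this kernel is spanned by base-pullbacks (which restrict to constants on each fibre, and constants are always holomorphy potentials) together with sections of the bundle $\mathcal{E}\to B$ whose fibre at $b$ is the space of non-constant holomorphy potentials on $X_b$.

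Having described the tangent space, I would substitute tangent vectors of this special form into the derived geodesic equation and expect the equation to decouple into two much simpler pieces. The fibrewise holomorphy potential contribution should be handled by exponentiating a path in the Lie algebra bundle of fibrewise holomorphic vector fields over $B$, yielding smoothness automatically. The remaining base contribution should reduce to a linear, or at worst weakly nonlinear, elliptic equation on $B$ with prescribed endpoint values, which is classically solvable in smooth regularity. That the theorem asserts smoothness rather than merely the $C^{1,1}$ regularity obtained in the classical Mabuchi setting is already strong evidence that such a decoupling must take place: otherwise one would be stuck with a homogeneous complex Monge-Amp\`ere equation on $X\times\bar A$ and the usual obstructions to higher regularity.

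The main obstacle, I anticipate, is rigorously justifying this decoupling and verifying that the candidate path remains inside $\K_E$ at all intermediate times, not merely at the endpoints. Preserving the fibrewise cscK condition along the path is stronger than merely requiring that each tangent vector lie in the linearised kernel, so a careful argument is needed. The natural approach is to build the path fibrewise using uniqueness of cscK metrics in a fixed K\"ahler class (modulo the identity component of the automorphism group), so that the fibrewise cscK property is incorporated by construction rather than imposed as an extra constraint. Smoothness across fibres then follows from smoothness of the family of cscK metrics and of the parameters defining the path, while uniqueness of the geodesic reduces to uniqueness of solutions to the two reduced equations.
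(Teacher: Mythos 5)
Your strategy coincides with the paper's: build the geodesic fibrewise from the uniqueness theorem for cscK metrics, using the explicit holomorphy-potential geodesics \(\psi_{b,t}=\int_0^t\exp(sv_b)^*u_b\,ds\), so that the fibrewise cscK condition holds by construction, then correct by a base term. However, there is a genuine gap at the point you declare that smoothness across fibres ``follows from smoothness of the family of cscK metrics and of the parameters defining the path'' and that exponentiating in the Lie algebra bundle yields smoothness ``automatically''. This is precisely the technical heart of the theorem and it does not come for free: for each \(b\), the uniqueness theorem of Berman--Berndtsson produces a holomorphy potential \(u_b\in E_b\) relating \(\omega_b\) and \(\omega_b'\), but it does so fibre by fibre through an abstract existence statement, and nothing in that construction shows \(b\mapsto u_b\) is even continuous. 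You cannot simply apply an implicit function theorem to the fibrewise correspondence \(u_b\mapsto\) (normalised potential), because the map \(E_b\to\K_{0,b}\) goes from a finite-dimensional space into an infinite-dimensional one and is nowhere near surjective, so its linearisation is not invertible. The paper's proof (Lemma \ref{lem:bundle_isom} and Theorem \ref{prop:universal_property}) resolves this in two steps: first, the fibrewise potentials of any relatively K{\"a}hler metric, normalised by the Monge--Amp{\`e}re energy, form a \emph{smooth} section of a Banach bundle \(\K_0\to B\), via the inverse function theorem applied to the bundle isomorphism \(\K_0^k\cong\L^{k-2}\); second, the bundle map \(E\to\K_0\), \(u\mapsto\int_0^1\exp(tv_u)^*u\,dt\), is shown to be an injective immersion (equivariance reduces this to injectivity of the derivative at the origin) whose preimages of closed bounded sets are compact (proved by evaluating at a maximum point of \(u\), which is fixed by the flow, so the potential diverges linearly as \(t\,u(x)\)). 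Only the combination of immersion plus this properness lets one conclude that the set-theoretic section \(b\mapsto u_b\) is smooth. Your proposal contains no substitute for this argument, so as written it establishes only a fibrewise-smooth family of fibrewise geodesics, not a smooth geodesic in \(\K_E\).

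A smaller miscalibration: the base contribution is not an elliptic equation on \(B\), weakly nonlinear or otherwise. Functions pulled back from \(B\) are fibrewise constant, so \(\d_\V\) annihilates them and the geodesic equation \(\ddot{\varphi}_t=|\d_\V\dot{\varphi}_t|^2_{\V,t}\) reduces on the base component to the trivial ODE \(\ddot{\psi}_t=0\), giving linear interpolation \(\varphi_t=\psi_t+tf\). The endpoint datum \(f\in C^\infty(B)\) is supplied not by solving a PDE but by Lemma \ref{lem:base_form}: the fibrewise-constructed metric \(\omega_{X,1}\) and the target \(\omega_X'\) agree on every fibre, hence differ by \(i\d\db\pi^*f\). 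Uniqueness then splits exactly as you predict: fibrewise uniqueness from the standard K{\"a}hler theory, and for the base the elementary statement that \(\ddot{\psi}_t=0\) with \(\psi_0=\psi_1=0\) forces \(\psi\equiv0\). This part of your outline is harmlessly overcomplicated, but the smoothness-in-\(b\) gap above is essential and must be filled.
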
 This is essentially a consequence of the fact that on a fixed K{\"a}hler manifold, any two cscK metrics \(\omega_0\) and \(\omega_1\) are joined by a unique smooth geodesic in the space of K{\"a}hler potentials. This follows from the work of Berman--Berndtsonn \cite{BB17}, indeed there exists an automorphism \(g\) in the identity component of \(\mathrm{Aut}(X)\) such that \(g^*\omega_1=\omega_0\), and \(g\) arises from a holomorphic vector field, which in turn generates a geodesic joining \(\omega_0\) to \(\omega_1\). Using the explicit structure of geodesics, we compute a Riemannian decomposition of \(\K_E\) into the product of a flat manifold and a non-positively curved manifold. This parallels the description of the space of K{\"a}hler potentials as a negatively curved space \cite{Mab87,Sem92,Don99}.

An important tool in the study of canonical metrics is to consider so-called log-norm functionals on spaces of metrics, such as Donaldson's functional on the space of hermitian metrics on a vector bundle \cite{Don85}, or the Mabuchi functional on the space of K{\"a}hler metrics in a fixed K{\"a}hler class \cite{Mab86}. Dervan and Sektnan show that there exists a natural log-norm funtional \(\N:\K_E\to\R\) on the space of relatively cscK potentials, whose critical points are precisely the optimal symplectic connections \cite{DS_uniqueness}. We prove: \begin{theorem}
The functional \(\N\) is convex along smooth geodesics in \(\K_E\).
\end{theorem}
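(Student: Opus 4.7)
The plan is to prove convexity of $\N$ by an explicit calculation of its second derivative along a smooth geodesic, showing it equals a non-negative quantity. The overall strategy mirrors Mabuchi's classical proof that the Mabuchi functional is convex along smooth geodesics in the space of K{\"a}hler potentials, adapted to the relative setting. The simplification compared with the absolute K{\"a}hler case is that geodesics in $\K_E$ are already known to be smooth by the previous theorem, so no regularisation argument is required.

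First, I would recall Dervan-Sektnan's first variation formula for $\N$ from \cite{DS_uniqueness}: at any $\varphi \in \K_E$ and tangent vector $\psi \in T_\varphi\K_E$,
\[
d\N_\varphi(\psi) = \int_X \psi \cdot \Phi(\varphi) \, \mu_\varphi,
\]
where $\mu_\varphi$ is the natural volume form on $X$ and $\Phi(\varphi)$ is the optimal symplectic connection operator, whose vanishing characterises critical points of $\N$. Applied along a smooth geodesic $\varphi_t$, this gives
\[
\frac{d}{dt}\N(\varphi_t) = \int_X \dot\varphi_t \cdot \Phi(\varphi_t) \, \mu_{\varphi_t}.
\]
Differentiating once more produces three kinds of contributions: an $\ddot\varphi_t$-term coming from the second derivative of $\dot\varphi_t$, the linearisation $\delta\Phi(\varphi_t)[\dot\varphi_t]$ integrated against $\dot\varphi_t$, and the variation of the volume form $\mu_{\varphi_t}$. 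By analogy with the K{\"a}hler case, $\delta\Phi$ should decompose into a principal part $\mathcal{D}^*\mathcal{D}\dot\varphi_t$, where $\mathcal{D}$ is the linear operator whose kernel consists of potentials generating fibrewise holomorphic automorphisms, plus lower-order terms of the form $|\nabla^V\dot\varphi_t|^2 \cdot \Phi(\varphi_t)$ together with volume-variation corrections; here $\nabla^V$ denotes the vertical gradient. The geodesic equation derived in the preceding section takes the form $\ddot\varphi_t - |\nabla^V\dot\varphi_t|^2 = 0$, so substituting this identity into the $\ddot\varphi_t$-term causes the non-quadratic contributions to cancel exactly, leaving
\[
\frac{d^2}{dt^2}\N(\varphi_t) = \|\mathcal{D}\dot\varphi_t\|_{\varphi_t}^2 \geq 0,
\]
which is the desired convexity.

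The main obstacle is the algebraic cancellation in the second derivative: the lower-order part of $\delta\Phi$, the variation of $\mu_{\varphi_t}$, and the geodesic equation must align precisely so that all non-quadratic terms vanish, leaving only the manifestly non-negative $\|\mathcal{D}\dot\varphi_t\|^2$ term. This parallels the classical Mabuchi computation but is technically more involved because the fibration geometry couples fibrewise and base-wise quantities, so the operator $\mathcal{D}$ and the decomposition of $\Phi$ have components adapted to both the fibre directions and the horizontal distribution. Careful integration by parts on $X$, combined with the fibrewise cscK condition and the splitting of $\Phi(\varphi)$ into its horizontal and vertical parts as identified by Dervan-Sektnan, should deliver the required identity.
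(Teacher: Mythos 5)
Your plan is correct and follows essentially the same route as the paper: differentiating the first variation \(\tfrac{d}{dt}\N(\varphi_t)=-\int_X\dot\varphi_t\,p_t(\theta_t)\,\omega_t^m\wedge\omega_B^n\), using the linearisation \(-\mathcal{R}_\varphi^*\mathcal{R}_\varphi\psi+\langle\d_\V p_\varphi(\theta_\varphi),\d_\V\psi\rangle_{\V,\varphi}\) of the optimal symplectic connection operator (your \(\mathcal{D}\) is the paper's \(\mathcal{R}_\varphi=\db\nabla^{1,0}_{\V,\varphi}\), whose kernel on \(T_\varphi\K_E\) consists of potentials whose vertical gradient is \emph{globally} holomorphic, not merely fibrewise --- a distinction irrelevant for convexity, only for strictness), and integrating by parts fibrewise to reach \(\tfrac{d^2}{dt^2}\N(\varphi_t)=\|\mathcal{R}_t\dot\varphi_t\|^2-\int_X(\ddot\varphi_t-|\d_\V\dot\varphi_t|^2_{\V,t})\,p_t(\theta_t)\,\omega_t^m\wedge\omega_B^n\), so the geodesic equation kills the residual term exactly as you predict. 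The single ingredient you defer to analogy --- the formula for the linearisation --- is obtained in the paper by extracting the \(k^{-1}\)-coefficient from the adiabatic expansion \(S(\omega_k+k^{-1}i\d\db\psi_R)=S(\omega_b)+k^{-1}(S(\omega_B)+p(\theta))+\O(k^{-2})\) combined with the standard Lichnerowicz linearisation of scalar curvature.
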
 This convexity immediately yields a greatly simplified proof of Dervan and Sektnan's recent uniqueness result for optimal symplectic connections, analogous to Donaldson's proof of uniqueness of Hermite--Einstein metrics: \begin{corollary}[{\cite[Theorem 1.1]{DS_uniqueness}}]
Let \(\omega_X\) and \(\omega_X'\) be two optimal symplectic connections in the same cohomology class. Then there exists a holomorphic automorphism \(g:X\to X\) preserving the projection \(\pi:X\to B\), and a smooth function \(f:B\to\R\) such that \[\omega_X'=g^*\omega_X+i\d\db\pi^*f.\]
\end{corollary}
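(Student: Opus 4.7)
The plan is to combine the two theorems stated above in exactly the way Donaldson's classical argument combines the existence of geodesics with convexity of the norm functional to deduce uniqueness of Hermite--Einstein metrics.

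First I would pick representatives $\varphi_0,\varphi_1\in\K_E$ of $\omega_X$ and $\omega_X'$ and, by the first theorem, join them by the unique smooth geodesic $(\varphi_t)_{t\in[0,1]}$ in $\K_E$. By the second theorem, $t\mapsto\N(\varphi_t)$ is convex. Since optimal symplectic connections are precisely the critical points of $\N$, the derivatives $\dot\N(0)$ and $\dot\N(1)$ both vanish. A convex function on $[0,1]$ with vanishing derivatives at both endpoints must be constant: $\dot\N$ is nondecreasing, with $\dot\N(0)=0\leq\dot\N(t)\leq\dot\N(1)=0$, so $\dot\N\equiv 0$. Hence $\ddot\N\equiv 0$ along the geodesic and equality holds in the convexity inequality everywhere.

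The main technical step, which I expect to be the principal obstacle, is to extract a holomorphic automorphism from this equality. By analogy with the K{\"a}hler and Hermite--Einstein settings, one expects $\ddot\N$ to be expressible as a squared norm measuring the failure of $\grad\dot\varphi_t$ (computed fibrewise, with the contribution from the transverse part of the geodesic suitably incorporated) to be a holomorphic vector field on $X$ preserving the projection $\pi$. Vanishing of this Hessian then forces the time-dependent vector field $\xi_t$ naturally associated to $\dot\varphi_t$ to be such a fibre-preserving holomorphic vector field for each $t$, and integrating its flow from $0$ to $1$ produces the desired automorphism $g:X\to X$ with $\pi\circ g=\pi$. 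The Riemannian splitting of $\K_E$ into a flat factor and a non-positively curved factor, noted in the introduction, should identify precisely the ``holomorphic'' directions in $\K_E$ along which $\N$ is affine, and the geodesic connecting two critical points of $\N$ must lie entirely in such a direction.

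Finally, the discrepancy $i\d\db\pi^*f$ accounts for the fact that relatively K{\"a}hler potentials are only determined up to pullbacks from $B$: once $g^*\omega_X$ is arranged to be cohomologous to $\omega_X'$, the two necessarily differ by $i\d\db$ of a $\pi$-pullback, and $f$ is read off from this difference. The only substantive computation beyond the two theorems is therefore pinning down $\ddot\N$ explicitly enough to see its sum-of-squares structure; once that is visible, the rigidity argument producing $g$ and $f$ is essentially formal.
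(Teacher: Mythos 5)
Your proposal is correct and takes essentially the same route as the paper: join the two metrics by the unique smooth geodesic in \(\K_E\), use convexity of \(\N\) together with criticality at both endpoints to force \(\N\) to be affine, and extract the automorphism and the function \(f\) from the equality case, with \(f\) coming from the flat \(C^\infty(B)\)-component \(tf\) of the geodesic. The ``principal obstacle'' you anticipate is in fact already resolved by the paper's convexity theorem (Theorem \ref{thm:fib_convexity}), whose second variation along a geodesic is exactly the squared norm \(\int_X|\mathcal{R}_t\dot{\varphi}_t|^2\,\omega_t^m\wedge\omega_B^n\) with \(\mathcal{R}_t=\db\nabla^{1,0}_{\V,t}\); its vanishing, combined with the explicit structure of geodesics \(\psi_t=\int_0^t\exp(sv)^*\dot{\psi}_0\,ds\) for the \emph{fixed} vertical field \(v=\nabla^{1,0}_\V\dot{\psi}_0\), makes \(v\) globally holomorphic, so one takes \(g=\exp(v)\) directly rather than integrating a time-dependent vector field.
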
 Using convexity, we also strengthen another result of Dervan--Sektnan, showing that existence of an optimal symplectic connection implies the functional \(\N\) is bounded.

A fibration is defined to be stable if a certain invariant associated to any nontrivial fibration degeneration \(\X\to B\times\C\) is positive; see Section \ref{sec:fibration_stability} for the precise definitions. The general fibre over \(B\) of a fibration degeneration may be assumed to be a product test configuration; here we restrict attention to a suitable class of ``product-type" fibration degenerations whose fibres are \emph{all} product test configurations. We can then associate to any product-type fibration degeneration a unique compatible geodesic ray in \(\K_E\). The key to this is a reduction of the geodesic equation on a product test configuration to a ``Wess--Zumino--Witten" equation by Donaldson \cite{Don99}. We then calculate the limiting slope of the functional \(\N\) along this geodesic ray, using a Chen--Tian style formula for the functional \(\N\) which we derive, together with results of \cite{BHJ19,Sjo18}. Applying the convexity theorem above, we prove: \begin{theorem}\label{thmC}
	If \(X\to B\) is a fibration admitting an optimal symplectic connection, then it is polystable with respect to product-type fibration degenerations.
\end{theorem}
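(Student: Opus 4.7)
The plan is to combine three ingredients that have already been set up earlier in the paper: the convexity of $\N$ along smooth geodesics in $\K_E$, the existence of a compatible smooth geodesic ray for any product-type fibration degeneration, and an asymptotic slope computation identifying $\lim_{t\to\infty} \N(\omega_t)/t$ with the algebraic stability invariant of the degeneration. First I would fix an optimal symplectic connection $\omega_X \in \K_E$ as a basepoint. Given a nontrivial product-type fibration degeneration $\X \to B \times \C$, I would invoke the correspondence between such degenerations and geodesic rays to obtain a smooth ray $\{\omega_t\}_{t \geq 0}$ in $\K_E$ starting at $\omega_X$ and compatible with $\X$. Writing $h(t):=\N(\omega_t)$, the convexity theorem makes $h$ convex on $[0,\infty)$. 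Since $\omega_X$ is a critical point of $\N$, we have $h'(0)=0$, and convexity then forces $h$ to be non-decreasing; in particular the limiting slope $h'(\infty)=\lim_{t\to\infty} h(t)/t$ exists in $(-\infty,+\infty]$ and is $\geq 0$.

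The core computation is to identify $h'(\infty)$ with the stability invariant $\DF(\X)$ of the degeneration, up to a positive constant. For this I would apply the Chen--Tian style formula for $\N$ derived earlier in the paper, which decomposes $\N$ into a sum of relatively entropic and relatively energetic pieces. Along the compatible geodesic ray, the asymptotics of each piece can be extracted: the fibrewise pieces reduce, via the Wess--Zumino--Witten reformulation of the geodesic equation on the product test configurations, to the asymptotics studied by Donaldson, while the base pieces can be controlled by the asymptotic results of \cite{BHJ19,Sjo18} for energy functionals along test-configuration rays, applied fibrewise and integrated over $B$. Matching these terms with the algebraic invariants assembled in Section \ref{sec:fibration_stability} then yields an identity $h'(\infty) = c\,\DF(\X)$ with $c>0$, and semistability $\DF(\X)\geq 0$ follows immediately.

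For the polystability refinement, suppose $\DF(\X)=0$. Then $h'(0)=h'(\infty)=0$ together with convexity of $h$ forces $h$ to be constant along the ray. Appealing to the rigidity inherent in the convexity statement, in the same spirit as the uniqueness Corollary, one concludes that the geodesic ray is generated by the infinitesimal action of a one-parameter subgroup of holomorphic automorphisms of $X$ preserving the projection $\pi:X\to B$. Unwinding the correspondence between product-type fibration degenerations and compatible rays, this means $\X$ is the product degeneration induced by this automorphism group, which is exactly the polystability condition.

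The main obstacle will be the slope computation in the second step: there is a substantial bookkeeping problem in writing the Chen--Tian style formula so that each term admits a clean asymptotic interpretation along a geodesic ray which is transverse to the fibres and only locally modelled on test-configuration geodesics. Controlling the mixed base/fibre terms, and ensuring that the global smoothness of the ray (rather than merely the weak regularity used in \cite{BHJ19}) is enough to justify the term-by-term limits, is where the real work lies. The rigidity step for polystability is comparatively clean once the Riemannian decomposition of $\K_E$ into its flat and non-positively curved factors is in hand, since the constancy of $h$ localises the ray to the flat factor, which carries the automorphism group action.
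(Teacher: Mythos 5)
Your overall strategy --- compatible geodesic ray, convexity of $\N$ with vanishing initial derivative at the critical point, identification of the limiting slope with the stability invariant, and rigidity in the equality case --- is exactly the paper's, but two of your steps are genuinely incomplete as stated. First, the slope computation: the paper does \emph{not} proceed fibrewise and integrate over $B$. Compatibility of the ray means the ray of metrics $h_X^t$ on $H$ comes from a single smooth $S^1$-invariant metric $h_\X$ on $\H|_D$; each functional $\widetilde{H},\widetilde{R},\widetilde{I},\widetilde{J}$ in the Chen--Tian style formula (Theorem \ref{prop:Chen-Tian_fib}) is rewritten as a difference of Deligne pairing metrics over a \emph{point} on the total space $X$ (Proposition \ref{prop:Deligne_pairings}), and Lemma \ref{lem:Deligne_limit} (\cite[Lemma 3.9]{BHJ19}) then gives the slope of every term as an intersection number on the compactification $\overline{\X}$ in one stroke, matching $W_1(\X,\H)$ via Proposition \ref{prop:W_1_int_theory}. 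This sidesteps entirely the ``mixed base/fibre terms'' you flag as the main obstacle; your proposed fibrewise Donaldson asymptotics integrated over $B$ does not match how the functionals decompose (they are global integrals against mixed powers $\omega_B^{n-1}\wedge\omega_X^i\wedge\omega_{X,\varphi}^j$), and it is not clear it could be made to work. Note also that the invariant computed is $W_1(\X,\H)$, not a Donaldson--Futaki invariant of $\X$: the condition $W_0(\X,\H)\geq 0$ is obtained separately from K-polystability of the fibres via Proposition \ref{prop:flat_locus}, and one reduces to the case $W_0(\X,\H)=0$ before running the ray argument at all.

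Second, the polystability endgame has a real gap. Constancy of $\N$ along the ray, via the equality case of Theorem \ref{thm:fib_convexity}, does give a ray generated by a one-parameter subgroup of $\Aut_0(\pi)$; but ``unwinding the correspondence'' to conclude that $\X$ \emph{is} the induced product degeneration presupposes that the assignment of compatible rays to degenerations is injective, which has not been established --- two non-isomorphic degenerations could a priori share a ray. The paper supplies this by citing \cite[Lemma 3.11]{Sjo20}: the geodesic ray induces a subgeodesic ray on the test configurations $(\X,kL+\H)$ for $k\gg0$, and at most one normal test configuration is compatible with a given subgeodesic ray, so $(\X,\H)$ must be the product degeneration. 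Finally, your closing remark that constancy of $h$ ``localises the ray to the flat factor'' of the Riemannian decomposition is incorrect: the flat factor is $C^\infty(B)$, which does not carry the automorphism action. The affine case of Theorem \ref{thm:fib_convexity} instead forces $\varphi_t=\psi_t+tf$ with $\nabla_\V^{1,0}u$ \emph{globally} holomorphic, i.e.\ the essential part of the ray lies in the $C^\infty(H)$ factor along an automorphism orbit, with only the residual $tf$ piece in the flat factor.
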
 This strengthens the semistability result of Dervan--Sektnan in a large class of cases. In terms stability of vector bundles, restricting attention to product-type fibration degenerations is analogous to considering slope stability only with respect to sub-vector bundles, rather than general coherent subsheaves. In particular, we expect that this should provide an essentially complete proof of polystability of fibrations over curves. We give a large class of examples of product-type fibration degenerations for both projectivised vector bundles and isotrivial fibrations. In particular, we also prove: \begin{corollary}
If \(X\to B\) is an isotrivial fibration admitting an optimal symplectic connection, then it is polystable with respect to fibration degenerations that are smooth over \(B\times\C\).
\end{corollary}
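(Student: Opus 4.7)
The plan is to reduce the corollary to Theorem \ref{thmC} by showing that, in the isotrivial setting, every fibration degeneration \(\X \to B\times\C\) that is smooth over \(B\times\C\) is automatically of product type; once this is established, Theorem \ref{thmC} immediately gives polystability.

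The first step would be a fibrewise analysis. For each \(b\in B\), smoothness of \(\X \to B\times\C\) implies that the restriction \(\X_b := \X\times_{B\times\C}(\{b\}\times\C) \to \C\) is a test configuration of \(X_b\) whose total space is smooth and whose projection to \(\C\) is a submersion. Combining Ehresmann's theorem (applicable in the holomorphic setting since \(\C\) is contractible and Stein) with the \(\C^*\)-equivariance intrinsic to a test configuration, I would transport the general fibre \(\X_{b,1}\cong X_b\) along the \(\C^*\)-action to obtain a trivialisation \(\X_b \cong X_b\times\C\), with a residual holomorphic \(\C^*\)-action on \(X_b\) supplying the product structure. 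Thus each \(\X_b\) is a product test configuration.

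The second step would use isotriviality to globalise this description. Since all fibres of \(\pi:X\to B\) are isomorphic to a single model \(F\), the fibration is locally trivial over \(B\) (possibly after a finite \'etale cover, but this does not affect stability). Over such a local trivialisation \(B'\times F\), the smooth degeneration restricts to a smooth family over \(B'\) of product test configurations of \(F\), parametrised by one-parameter subgroups of \(\Aut(F)\) with a suitable linearisation. This is precisely the product-type condition of Section \ref{sec:fibration_stability}, and so Theorem \ref{thmC} applies.

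The main obstacle I anticipate is ensuring that the fibrewise trivialisations provided by Ehresmann's theorem glue consistently in \(b\in B\) in a manner compatible with the global \(\C^*\)-action on \(\X\). Isotriviality is what removes this obstruction: if the fibre \(X_b\) varied nontrivially with \(b\), the one-parameter subgroup producing the product structure on each \(\X_b\) could jump discontinuously and no single global product-type description would exist. Once this compatibility is verified, the corollary follows at once from Theorem \ref{thmC}.
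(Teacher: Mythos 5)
Your first step contains a genuine gap: smoothness of \(\X_b\to\C\) does \emph{not} imply that \(\X_b\) is a product test configuration, and the mechanism you propose cannot work. Ehresmann's theorem only produces a \(C^\infty\) trivialisation, not a holomorphic one, and the \(\C^*\)-action does not ``transport the general fibre to the central fibre'': the central fibre is \emph{invariant} under the action, not translated by it, so equivariance identifies the fibres over \(\C^*\) with one another but says nothing about \(\X_{b,0}\). Concretely, the standard degeneration of \(\P^1\times\P^1=\P(\mathcal{O}\oplus\mathcal{O})\) to the Hirzebruch surface \(\F_2=\P(\mathcal{O}\oplus\mathcal{O}(2))\) is a test configuration with smooth total space and submersive projection to \(\C\) whose central fibre is not biholomorphic to the general fibre; it is not a product. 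The paper closes exactly this gap in Lemma \ref{lem:b} by using hypotheses your argument never invokes: one may assume \(W_0(\X,\H)=0\) (degenerations with \(W_0>0\) do not destabilise), whence Proposition \ref{prop:flat_locus} gives \(\DF(\X_b,\H_b)=0\) for \emph{general} \(b\); constancy of the Donaldson--Futaki invariant in flat families promotes this to \emph{every} fibre; and then normality of the fibres (from smoothness) together with K-\emph{polystability} of \((X_b,H_b)\) forces each \(\X_b\) to be a product test configuration. Without the input of fibre polystability and \(W_0=0\), your claim that every smooth degeneration is product-type is simply false, as the example above shows.

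Your second step is closer to the paper's Lemma \ref{lem:a} but also under-argued. Once all fibres \(\X_b\) are known to be products, the correct tool for globalising is the Fischer--Grauert theorem applied to \(\X\to B\times\C\) near \((b,0)\): isotriviality plus the fibrewise product structure means all fibres of \(\X\), including the central ones, are isomorphic to the single model \(F\), so the family is holomorphically locally trivial. Your appeal to Ehresmann ``in the holomorphic setting'' does not substitute for this, and your parenthetical passage to a finite \'etale cover is both unnecessary (Fischer--Grauert gives local triviality directly) and unjustified (you do not verify that stability is insensitive to such covers). Finally, the product-type condition also requires trivialising the line bundle, \(\H|_{U\times\C}\cong p^*H|_U\), which the paper handles by a separate lemma (of the type in \cite[Lemma 5.10]{New78}) and which your proposal omits entirely.
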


Here smoothness is meant in the sense of scheme theory, so that \(\X\to B\times\C\) is a holomorphic surjective submersion.

The general framework of ideas above has proven tremendously useful in the theory of cscK metrics and K-stability. For instance, the proof of Theorem \ref{thmC} above is analogous to \cite{BHJ19} and \cite{BDL16} in the K{\"a}hler setting. One might hope that these methods will eventually lead to a converse, yielding a full proof of the conjecture. Interestingly, these ideas have also fed back into the theory of vector bundles, particularly in \cite{HK18} where the authors reprove one direction of the Kobayashi--Hitchin correspondence using convexity of Donaldson's functional and non-Archimedean limits.

\subsection*{Outline}

We begin in Section \ref{background} by first reviewing relevant material on K-stability for varieties and K{\"a}hler metrics of constant scalar curvature. We then cover more recent material on stability of fibrations and optimal symplectic connections needed in later sections.

In Section \ref{sec:geodesic_equation} we derive the geodesic equation for relatively cscK potentials, and prove existence and uniqueness of smooth geodesics. We then compute a Riemannian decomposition of the space of relatively cscK potentials, into the product of a flat manifold and a non-positively curved manifold.

Next, in Section \ref{functional} we analyse the log-norm functional along geodesics. In particular, we show that the log-norm functional is convex along smooth geodesics. We then use this to give a very simple proof of uniqueness of optimal symplectic connections, and to prove the log-norm functional is bounded below when an optimal symplectic connection exists.

In Section \ref{sec:Chen--Tian} we prove an analogue of the Chen--Tian formula for the log-norm functional \(\N\), which is achieved by expanding the usual Chen--Tian formula for the Mabuchi functional in an adiabatic limit.

Finally in Section \ref{polystability} we show that fibrations admitting optimal symplectic connections are polystable with respect to product-type fibration degenerations. In particular, we associate to any such fibration degeneration a unique smooth geodesic ray. Calculating the limiting slope of the log-norm functional along this ray using the Chen--Tian style formula from Section \ref{sec:Chen--Tian}, we recover the invariant computing stability of the fibration. We end by giving a large class of examples of product-type fibration degenerations for projectivised vector bundles and also for isotrivial fibrations.

\renewcommand{\abstractname}{Acknowledgements}
\begin{abstract}
I owe many thanks to my supervisors: to Ruadha{\'i} Dervan for recommending this problem and patiently guiding me through it, and to Frances Kirwan for plenty of helpful discussions. I also thank John McCarthy for his help with Example \ref{ex:principal_bundle_fibration}, which appears in Chapter 7 of his PhD thesis \cite{McC22}. I lastly thank the referee, for many helpful comments and suggestions. This research was conducted at Oxford University under the support of a Mathematical Institute Studentship.
\end{abstract}

\section{Background}\label{background}

In this section we outline the relevant background material on the differential geometry and algebro-geometric stability of K{\"a}hler manifolds and fibrations. An excellent reference for the setting of K{\"a}hler manifolds is the book \cite{Sze14}, and the notions relevant for fibrations may be found in the papers by Dervan and Sektnan \cite{DS_osc,DS_moduli,DS_uniqueness}.

\subsection{K{\"a}hler geometry}\label{sec:Kahler_background}

Let \((Y,\omega)\) be a compact K{\"a}hler manifold of dimension \(d\). The \emph{scalar curvature} of \(\omega\) is the smooth function \(S(\omega)\) on \(Y\) given by contracting the Ricci cuvature of \(\omega\), \[\Ric(\omega):=-\frac{i}{2\pi}\d\db\log\omega^d,\] with the metric \(\omega\) itself: \[S(\omega):=\Lambda_\omega \Ric(\omega).\] We will be particularly interested in K{\"a}hler metrics \(\omega\) whose scalar curvature \(S(\omega)\) is a constant function; such metrics will be abbreviated \emph{cscK} (constant scalar curvature K{\"a}hler). The average of \(S(\omega)\) is a topological constant, given by \[\hat{S}=\frac{d\,c_1(Y)\cdot[\omega]^{d-1}}{[\omega]^d}\] where \([\omega]\) is the K{\"a}hler class of \(\omega\). Thus, the metric \(\omega\) is cscK if and only if \(S(\omega)=\hat{S}\). For links to algebraic geometry, we will later consider the case where \([\omega]\) is equal to the first Chern class \(c_1(L)\) of an ample line bundle \(L\to Y\).

 For a fixed K{\"a}hler metric \(\omega\), let \[\K:=\{\phi\in C^\infty(Y,\R):\omega+i\d\db\phi>0\}\] be the space of K{\"a}hler potentials with respect to \(\omega\). We will use the notation \(\omega_\phi:=\omega+i\d\db\phi\) whenever \(\phi\in \K\). Any K{\"a}hler metric in the K{\"a}hler class \([\omega]\) may be written as \(\omega_\phi\) for some \(\phi\in\K\) that is unique up to addition of a real constant.  Since \(\K\) is an open subset of the Fr{\'e}chet space \(C^\infty(Y,\R)\), it is naturally a Fr{\'e}chet manifold with tangent space isomorphic to \(C^\infty(Y,\R)\) at each point.

There is a natural \(L^2\)-inner product on each tangent space \(T_\phi\K\cong C^\infty(Y,\mathbb{R})\) defined by \[\langle\psi_1,\psi_2\rangle_\phi:=\int_Y\psi_1\psi_2\,\omega_\phi^d.\] This in turn gives rise to an energy functional \(\mathcal{P}\) on smooth paths \(\phi:[0,1]\to\mathcal{K}\), \[\mathcal{P}(\phi):=\int_0^1\langle\dot{\phi}_t,\dot{\phi}_t\rangle_{\phi_t}\,dt.\] A smooth path \(\phi:[0,1]\to\mathcal{K}\) is a \emph{geodesic} if it is a critical point of the energy functional \(\mathcal{P}\). Taking the variation of the functional \(\mathcal{P}\), one shows that a smooth path \(\phi:[0,1]\to\K\) is a geodesic if and only if it satisfies the PDE \[\ddot{\phi}_t=|\d\dot{\phi}_t|^2_{t}\] for all \(t\in[0,1]\), where \(|\cdot|_t\) is the hermitian metric on \(\Lambda^{1,0}T^*Y\) determined by \(\omega_t:=\omega+i\d\db\phi_t\); we refer to \cite[Proposition 4.25]{Sze14} for a detailed proof. 
	
It is an important fact that any two points of \(\K\) are joined by a unique geodesic \cite{Che00}, although the geodesic may not be smooth and one must enlarge the space \(\K\) to include K{\"a}hler potentials that are only \(C^{1,1}\)-regular \cite{CTW18}. 

A very useful class of geodesics can be constructed through holomorphic vector fields on \(Y\), which are in turn described by holomorphy potentials. A \emph{holomorphy potential} (with respect to \(\omega\)) is a smooth function \(u:Y\to\C\) such that \(\nabla^{1,0}u\) is a holomorphic vector field, where \(\nabla\) is the gradient operator determined by \(\omega\) and \(\nabla^{1,0}u:=(\nabla u)^{1,0}\). Equivalently, a holomorphy potential is a function in the kernel of the operator \(\mathcal{D}:C^\infty(Y,\C)\to\Omega^{0,1}(\Lambda^{1,0}TY)\) defined by \[\mathcal{D}(u)=\db\nabla^{1,0}u.\] The operator \(\mathcal{D}^*\mathcal{D}:C^\infty(Y,\C)\to C^\infty(Y,\C)\) is called the \emph{Lichnerowicz operator}; it is a fourth order elliptic operator, and hence its kernel is finite dimensional. 

Define \[E:=\{f:Y\to\mathbb{R}:\mathcal{D}(f)=0\text{ and } \int_Y f\,\omega^d=0\},\] and let \(\mathfrak{g}\) be the finite dimensional vector space of holomorphic vector fields on \(Y\) that are equal to \(\nabla^{1,0}u\) for some holomorphy potential \(u\). The operator \(\nabla^{1,0}:E\oplus iE\to\mathfrak{g}\) is then an isomorphism. We remark that \(\mathfrak{g}\) is exactly the space of holomorphic vector fields which vanish somewhere \cite[Theorem 1]{LS94}, so is independent of the metric \(\omega\). It is also a Lie algebra under the Lie bracket of vector fields.

We denote by \(\Aut(Y)\) the complex Lie group of biholomorphisms of \(Y\), and by \(\Aut_0(Y)\) its identity component. Given a holomorphic vector field \(v\in H^0(Y,TY)\), we let \(v_{\mathbb{R}}:=(v+\bar{v})/2\) be its real part, and define \(\exp(tv)\) to be the time \(t\) flow of the real holomorphic vector field \(v_{\mathbb{R}}\), which is a biholomorphism of \(Y\). This notation comes from Lie group theory: here \(v\) is an element of the Lie algebra \(H^0(Y,TY)\) of holomorphic vector fields, and \(\exp(tv)\in\Aut_0(Y)\) is the image of \(tv\) under the exponential map of Lie groups. If \(v=\nabla^{1,0}u\) for some \(u\in E\), then \(v_{\mathbb{R}}=\frac{1}{2}\nabla u\).

Our interest in holomorphy potentials comes from the following result:

\begin{proposition}\label{prop:holomorphy_geodesics}
	Let \(\omega\) be a cscK metric, let \(u\in E\), and define \(v:=\nabla^{1,0}u\). For \(t\in\mathbb{R}\), denote by \(\exp(tv)\) the time \(t\) flow of \(v_{\mathbb{R}}=\frac{1}{2}\nabla u\). Then \begin{equation}\label{eq:holomorpy_geodesic}
	\phi_t:=\int_0^t\exp(sv)^*(u)\,ds
	\end{equation} defines a smooth geodesic in \(\mathcal{K}\). Writing \(\omega_t:=\omega+i\d\db\phi_t\), we have \begin{equation}\label{eq:pullback_formula}
	\omega_t=\exp(tv)^*\omega
	\end{equation} for all \(t\), and hence \(\omega_t\) is a cscK metric. Furthermore, the geodesic satisfies \begin{equation}\label{eq:integral_zero}
	\int_Y\dot{\phi}_t\,\omega_t^d=0
	\end{equation} for all \(t\), and if \(u\neq0\) then the geodesic is nontrivial.
	
	Conversely, suppose that \(\omega\) and \(\omega'\) are two cscK metrics in the same K{\"a}hler class, and let \(E\) be the space of real integral-zero holomorphy potentials with respect to \(\omega\). Then there exists a unique element \(u\in E\) such that the geodesic \(\phi_t\) defined by \eqref{eq:holomorpy_geodesic} satisfies \(\omega+i\d\db\phi_1=\omega'\). Thus, the set of cscK metrics in \([\omega]\) is in bijection with \(E\).
\end{proposition}

The forward implication (that \(\phi_t\) defines a geodesic satisfying \eqref{eq:pullback_formula} and \eqref{eq:integral_zero}) is well known (see \cite[Example 4.26]{Sze14}, for example), although we will give a detailed proof for completeness. The reverse implication is a simple consequence of the deep uniqueness theorem of Berman--Berndtsson for cscK metrics \cite[Theorem 1.3]{BB17}.

Before proving the theorem, we must better understand the space of cscK metrics. For a K{\"a}hler metric \(\omega\), we denote by \(\Isom(\omega)\) the real Lie group of isometries of \((Y,\omega)\), and by \(\Isom_0(\omega)\) its identity component. Note that these are real compact Lie subgroups of \(\Aut(Y)\) and \(\Aut_0(Y)\), respectively. The Lie algebra \(\mathfrak{g}\) of holomorphic vector fields arising from holomorphy potentials is the Lie algebra of a closed Lie subgroup \(\Aut_{\red}(Y)\) of \(\Aut_0(Y)\) called the \emph{reduced automorphism group}. Concretely, \(\Aut_{\red}(Y)\) is the kernel of the map \(\Aut_0(Y)\to\mathrm{Alb}(Y)\), where \(\mathrm{Alb}(Y)\) is the Albanese torus of \(Y\) \cite[Section 2.4]{Gau10}. 

In the case \(\omega\) is a cscK metric, we have the following:

\begin{theorem}[Partial Cartan decomposition / Matsushima--Lichnerowicz Theorem]\label{thm:decomposition}
	Let \((Y,\omega)\) be a compact K{\"a}hler manifold with constant scalar curvature. Denote by \(\mathfrak{g}\) the complex Lie algebra of holomorphic vector fields generated by holomorphy potentials, and let \(\mathfrak{k}\) denote the real Lie subalgebra of \(\mathfrak{g}\) consisting of Killing vector fields for \(\omega\) in \(\mathfrak{g}\). Then: \begin{enumerate} 
		\item there is a splitting \[\mathfrak{g}=\mathfrak{k}\oplus i\mathfrak{k}\] of real vector spaces, and
		\item \(\mathfrak{k}\) is generated by the \emph{purely imaginary} holomorphy potentials.
	\end{enumerate} 
	Furthermore, let \(G:=\Aut_{\red}(Y)\) and \(K:=\mathrm{Isom}_0(\omega)\cap G\). Then: \begin{enumerate}\setcounter{enumi}{2}
		\item \(\mathfrak{g}\) is the Lie algebra of \(G\), and
		\item \(K\subset G\) is a maximal compact subgroup whose Lie algebra is \(\mathfrak{k}\).
	\end{enumerate}  
	Lastly, write \(\tilde{\mathfrak{g}}\) for the Lie algebra \(H^0(Y,TY)\) of all holomorphic vector fields, and let \(\mathfrak{a}\subset\tilde{\mathfrak{g}}\) be the abelian ideal of harmonic vector fields. Write \(\tilde{G}:=\Aut_0(Y)\) and \(\tilde{K}:=\Isom_0(\omega)\). Then: \begin{enumerate}\setcounter{enumi}{4}
	\item the Lie algebra \(\tilde{\mathfrak{k}}\) of \(\tilde{K}\) satisfies: \[\tilde{\mathfrak{k}}=\mathfrak{a}\oplus\mathfrak{k},\] and 
	\item the map \(\tilde{K}\times i\mathfrak{k}\to\tilde{G}\), \((k,v)\mapsto k\exp(v)\) is surjective.
	\end{enumerate}
\end{theorem} 

We refer to \cite[Proposition 2.4.2 and Sections 3.4--3.6]{Gau10} and \cite[Section 6.1]{DaR17} for proofs of these facts. In Lie group terms, the above theorem says the Lie group \(G\) is a complex reductive group, and is the complexification of the maximal compact subgroup \(K\). In this situation, we may apply the following lemma:

\begin{lemma}\label{lem:homog_space}
Let \(G\) be a connected complex Lie group and let \(K\subset G\) be a maximal compact subgroup. Denote by \(\mathfrak{g}\) and \(\mathfrak{k}\) the Lie algebras of \(G\) and \(K\), respectively. If \(\mathfrak{g}=\mathfrak{k}\oplus i\mathfrak{k}\), then the map \[i\mathfrak{k}\to G/K,\quad\quad v\mapsto K\exp(v)\] is a diffeomorphism, where \(G/K\) is the space of right cosets of \(K\), and \(\exp:\mathfrak{g}\to G\) is the exponential map of Lie groups. Furthermore, the space \(G/K\) is non-positively curved, and the geodesics in \(G/K\) emanating from \(K\cdot 1\) all take the form \(t\mapsto K\exp(tv)\) for some \(v\in i\mathfrak{k}\).
\end{lemma}

See \cite[Appendix C]{GRS21} for an account of this fact. The significance of the homogeneous space \(G/K\) is the following:

\begin{lemma}\label{lem:cscK_space}
	Let \(\omega\) be a cscK metric on \(Y\). As above, denote by \(G\) the reduced automorphism group of \(Y\), and \(K\) the subgroup of \(G\) consisting of isometries of \(\omega\) in \(G\). Then the homogeneous space \(G/K\) of right cosets of \(K\) is in bijection with the set of cscK metrics in the K{\"a}hler class \([\omega]\).
\end{lemma}

\begin{proof}
	If \(\omega'\) is a cscK metric in the K{\"a}hler class \([\omega]\), then by \cite[Theorem 1.3]{BB17} there exists a biholomorphism \(g\in\tilde{G}:=\Aut_0(Y)\) so that \(\omega'=g^*\omega\). By \emph{(6)} of Theorem \ref{thm:decomposition}, there exist elements \(k\in\tilde{K}:=\Isom_0(\omega)\) and \(v\in i\mathfrak{k}\) such that \(g=k\exp(v)\). Since \(\mathfrak{k}\oplus i\mathfrak{k}\) is the Lie algebra of \(G\), the element \(\exp(v)\) lies in \(G\), and furthermore satisfies \(\exp(v)^*\omega=\exp(v)^*k^*\omega=g^*\omega=\omega'\). Hence we may take \(g\) to lie in \(G\) rather than \(\tilde{G}\). Since \(K\) consists of the isometries of \(\omega\) in \(G\), we can associate to \(\omega'\) the unique coset \(Kg\). Conversely, for any \(g\in G\) the metric \(g^*\omega\) is cscK, is unchanged by left-multiplying \(g\) by an element of \(K\), and is in the class \([\omega]\). Thus, we can also associate to any element of \(G/K\) a unique cscK metric.
\end{proof}

We now have enough information to prove Proposition \ref{prop:holomorphy_geodesics}.

\begin{proof}[Proof of Proposition \ref{prop:holomorphy_geodesics}] First, differentiating \eqref{eq:holomorpy_geodesic}, \begin{equation}\label{eq:first_derivative}
\dot{\phi}_t=\exp(tv)^*(u).
\end{equation} It follows that \[\frac{d}{dt}(\omega+i\d\db\phi_t)=\exp(tv)^*(i\d\db u).\] By definition of the gradient, \(\omega(v_{\R},-)=d^cu\), where \(d^c:=\frac{i}{2}(\db-\d)\). Hence \begin{align*}
\frac{d}{dt}(\exp(tv)^*\omega) &= \exp(tv)^*(\mathcal{L}_{v_{\R}}\omega) \\
&= \exp(tv)^*(d(\omega(v_{\R},-))) \\
&= \exp(tv)^*(dd^cu) \\
&= \exp(tv)^*(i\d\db u),
\end{align*} where \(\mathcal{L}_{v_{\R}}\) is the Lie derivative with respect to \(v_{\R}\) and we have used Cartan's magic formula in the second line. Since \(\omega+i\d\db\phi_0=\omega=\exp(0v)^*\omega\), the equality \(\exp(tv)^*\omega=\omega+i\d\db\phi_t\) holds for all \(t\), proving \eqref{eq:pullback_formula}. 

We easily see \eqref{eq:integral_zero} by \[\int_Y\dot{\phi}_t\,\omega_t^d=\int_Y\exp(tv)^*(u)\,(\exp(tv)^*\omega)^d=\int_Yu\,\omega^d=0,\] where we have used \eqref{eq:first_derivative}, \eqref{eq:pullback_formula}, that \(\exp(tv)\) is a diffeomorphism, and that \(u\in E\) means \(u\) has integral zero with respect to \(\omega\).

Denote by \(g_t\) the Riemannanian metric corresponding to the K{\"a}hler metric \(\omega_t\), and let \(g:=g_0\). Differentiating \eqref{eq:first_derivative}, \begin{align*}
\ddot{\phi}_t &= \exp(tv)^*(\mathcal{L}_{v_{\R}}(u)) \\
&= \exp(tv)^*\left(\frac{1}{2}g(\nabla u,\nabla u)\right) \\
&= \exp(tv)^*(|\d u|^2).
\end{align*} Since \(\exp(tv)\) is a biholomorphism, and since \(\exp(tv)^*\omega=\omega_t\), we have \(\exp(tv)^*g=g_t\). Combining this with \eqref{eq:first_derivative}, \[\exp(tv)^*(|\d u|^2)=|\d(\exp(tv)^*(u))|_t^2=|\d\dot{\phi}_t|_t^2,\] and so the geodesic equation \(\ddot{\phi}_t=|\d\dot{\phi}_t|_t^2\) holds. If \(u\neq0\) then \(\dot{\phi}_0\neq0\), and the geodesic is nontrivial. This completes the proof of the forward implication.

For the converse, let \(\omega\) and \(\omega'\) be cscK metrics in the same K{\"a}hler class. By Lemma \ref{lem:cscK_space}, the metric \(\omega'\) corresponds to an element \(Kg\) in the homogeneous space \(G/K\). By Lemma \ref{lem:homog_space}, \(Kg\) is equal to \(K\exp(v)\), for a unique element \(v\in i\mathfrak{k}\). By \emph{(2)} of Theorem \ref{thm:decomposition}, \(i\mathfrak{k}\) is isomorphic to \(E\) via \(\nabla^{1,0}\), so there exists a unique element \(u\in E\) such that \(\omega'\) corresponds to \(G\exp(\nabla^{1,0}u)\) under the isomorphism of Lemma \ref{lem:cscK_space}. In particular, \(\omega'=\exp(\nabla^{1,0}u)^*\omega\). We now see from \eqref{eq:pullback_formula} that the geodesic \(\phi_t\) defined by \eqref{eq:holomorpy_geodesic} satisfies \(\omega'=\exp(\nabla^{1,0}u)^*\omega=\omega+i\d\db\phi_1\).
\end{proof}

The existence of a cscK metric in a fixed K{\"a}hler class is closely related to the properties of the following functional on the space of K{\"a}hler potentials: fixing a K{\"a}hler metric \(\omega\) on \(Y\), there exists a unique functional \(\mathcal{M}:\K\to\R\) such that \(\mathcal{M}(0)=0\) and \begin{equation}\label{eq:Mabuchi}
	\frac{d}{dt}\mathcal{M}(\phi_t)=\int_Y\dot{\phi}_t(\hat{S}-S(\omega_t))\,\omega_t^d,
	\end{equation} for any smooth path \(\phi:[0,1]\to\K\) \cite{Mab86}. The functional \(\M\) is called the \emph{Mabuchi functional}; it depends on the choice of fixed K{\"a}hler metric \(\omega\), but in an inessential way. Note that critical points of the Mabuchi functional correspond to cscK metrics in the class \([\omega]\).

\subsection{K-stability} Consider a pair \((Y,L)\), where \(Y\) is a complex normal projective variety of dimension \(d\) and \(L\) is an ample line bundle on \(Y\). Such a pair \((Y,L)\) is called a \emph{polarisied variety}. It is known that when \(Y\) is smooth and admits a cscK metric in the K{\"a}hler class \(c_1(L)\), the polarised variety \((Y,L)\) is K-polystable \cite{BDL16}. Here, K-polystability is an algebro-geometric stability condition, defined by analogy with the Hilbert-Mumford condition from Geometric Invariant Theory, which we will now describe. 

\begin{definition}
	Let \((Y,L)\) be a polarised variety. A \emph{test configuration} \((\Y,\L)\) for \((Y,L)\) consists of: \begin{enumerate}[label=(\roman*)]
		\item a variety \(\Y\) with a \(\C^*\)-equivariant flat proper morphism \(\Y\to\C\), and
		\item a \(\C^*\)-equivariant relatively ample line bundle \(\L\to\Y\),
	\end{enumerate} such that there is an isomorphism \((\Y_1,\L_1)\cong(Y,L^r)\) for some \(r>0\) called the \emph{exponent} of the test configuration. 
\end{definition}

Two test configurations \((\Y,\L)\) and \((\Y',\L')\) are \emph{isomorphic} if there is a \(\C^*\)-equivariant isomorphism \((\Y,\L)\xrightarrow{\sim}(\Y',\L')\) preserving the projections to \(\C\).

\begin{example}
	Given a one-parameter subgroup \(\rho:\C^*\to\Aut(Y,L)\) we define a test configuration \((\Y,\L)_\rho\) by taking \(\Y:=Y\times\C\) with the diagonal \(\C^*\)-action and \(\L:=p^*L\) with the induced action from \(\rho\), where \(p:Y\times\C\to Y\) is the projection. Any test configuration isomorphic to one of the form \((\Y,\L)_\rho\) for some \(\rho:\C^*\to\Aut(Y,L)\) is called a \emph{product test configuration}. If \(\rho\) is trivial, we call the test configuration \emph{trivial}.
\end{example}

Given a test configuration \((\Y,\L)\), denote by \(A_k\) the infinitesimal generator of the \(\C^*\)-action on \( H^0(\Y_0,\L_0^k)\), and let \(w_k:=\tr A_k\). Then  there are expansions \begin{align*}
\dim H^0(\Y_0,\L_0^k)&=a_0k^d+a_1k^{d-1}+\O(k^{d-2}), \\ w_k&=b_0k^{d+1}+b_1k^d+\O(k^{d-1}),
\end{align*} which hold for all \(k\gg0\). The \emph{Donaldson--Futaki} invariant of \((\Y,\L)\) is defined as \[\DF(\Y,\L):=\frac{a_1b_0-a_0b_1}{a_0^2}.\] 

Recall that a scheme \(Y\) over the complex numbers is called \emph{normal} if its local ring at each point is an integrally closed domain; in particular the singular locus of \(Y\) has codimension at least 2. Any reduced scheme \(Y\) has a unique \emph{normalisation}, that is a finite birational morphism \(\widetilde{Y}\to Y\) from a normal scheme \(\widetilde{Y}\) to \(Y\). If \(Y\) is normal and \((\Y,\L)\) is a test configuration, \(\tY\) is the normalisation of \(\Y\) and \(\widetilde{\L}\) is the pullback of \(\L\) to \(\tY\), then \((\tY,\widetilde{\L})\) is also a test configuration for \((Y,L)\) called the \emph{normalisation} of \((\Y,\L)\). A test configuration \((\Y,\L)\) is called \emph{normal} if its total space \(\Y\) is a normal scheme.

\begin{definition}[{\cite{Tia97,Don02}}]
	A polarised variety \((Y,L)\) is: \begin{enumerate}[label=(\roman*)]
		\item \emph{K-semistable} if \(\DF(\Y,\L)\geq0\) for all test configurations \((\Y,\L)\) for \((Y,L)\),
		\item \emph{K-polystable} if it is K-semistable and \(\DF(\Y,\L)=0\) only if \((\Y,\L)\) normalises to a product test configuration for \((Y,L)\),
		\item \emph{K-stable} if it is K-semistable and \(\DF(\Y,\L)=0\) only if \((\Y,\L)\) normalises to the trivial test configuration for \((Y,L)\),
		\item \emph{K-unstable} if it is not K-semistable.
	\end{enumerate} 
\end{definition}

\begin{remark}\label{rem:tc_normal}
	By a result of Ross--Thomas \cite{RT07}, the normalisation \((\tY,\widetilde{\L})\) of a test configuration \((\Y,\L)\) satisfies \(\DF(\widetilde{\Y},\widetilde{\L})\leq\DF(\Y,\L)\). Hence it suffices to consider only normal test configurations when testing K-(semi/poly)stability.
\end{remark}

When \((\Y,\L)\) is a normal test configuration, there is also an important intersection theoretic formula for the Donaldson--Futaki invariant. Since \((\Y,\L)|_{\C^*}\cong(Y\times\C^*,L)\) via the \(\C^*\)-action, the test configuration \((\Y,\L)\) admits a trivial compactification over \(\infty\in\P^1\). We denote this compactification by \((\overline{\Y},\overline{\L})\). There is also a relative canonical bundle \(K_{\overline{\Y}/\P^1}\) on \(\overline{\Y}\), which is only a Weil divisor in general. By {\cite{Wan12,Oda13}}, if \((\Y,\L)\) is a normal test configuration for \((Y,L)\)  then \[\DF(\Y,\L)=\frac{1}{d+1}\mu(Y,L)(\overline{\L}^{\,d+1})+(\overline{\L}^{\,d}\cdot K_{\overline{\Y}/\P^1}),\] where \(\mu(Y,L):=\frac{-dK_Y\cdot L^{d-1}}{L^d}\).

\subsection{Differential geometry of fibrations}\label{fib_diff_geom}

In this section we will consider holomorphic surjective submersions \(\pi:X\to B\), where \(X\) and \(B\) are compact K{\"a}hler manifolds. By Ehresmann's theorem such a morphism \(\pi\) is then (smoothly) a locally trivial fibration; all the fibres \(X_b\) for \(b\in B\) are diffeomorphic, although the complex structures may vary. We write \(n\) for the dimension of \(B\) and \(m\) for the dimension of each fibre \(X_b\). We fix a K{\"a}hler class \([\omega_B]\) on \(B\) and a relatively K{\"a}hler class \([\omega_X]\) for \(\pi:X\to B\). For each \(b\in B\), we will denote by \(\omega_b\) the K{\"a}hler metric \(\omega_X|_{X_b}\) on the fibre \(X_b\).

To study the differential geometry of such fibrations, we will further make the following assumptions: \begin{enumerate}
	\item Each fibre \(X_b\) admits a cscK metric in the class \([\omega_b]=[\omega_X]|_{X_b}\). 
	\item  \(\dim_\R E_b\) is independent of \(b\), where \(E_b\) is the space of real holomorphy potentials on the fibre \(X_b\) having zero integral with respect to \(\omega_b\).
\end{enumerate}

\begin{example}
	\begin{enumerate}[label=(\roman*)]
		\item Let \(V\to B\) be a holomorphic vector bundle, let \(X:=\P(V)\) be its projectivisation, and let \(H:=\mathcal{O}_{\P(V)}(1)\) be the relative \(\mathcal{O}(1)\)-line bundle.  Then \((X,c_1(H))\) satisfies the hypotheses above.
		\item More generally, let \(P\to B\) be a holomorphic principal \(G\)-bundle, where \(G\) is a reductive group, and let \((Y,L_Y)\) be a polarised manifold admitting a cscK metric in \(c_1(L_Y)\). Given a representation \(\rho:G\to\Aut(Y,L_Y)\), we can form the associated fibre bundle \(X:=P\times_\rho Y\), whose fibre over any point of \(b\) is \(Y\). There is also an induced line bundle \(H:=P\times_\rho L_Y\) whose restriction to any fibre \(Y\) is \(L_Y\). Hence \((X,c_1(H))\) satisfies the above hypotheses. If \(G=\GL(N)\) and \((Y,L_Y)=(\P^{N-1},\mathcal{O}(1))\) then we recover the previous example.
	\end{enumerate}
\end{example}

\begin{definition}
	A \emph{relatively cscK metric} is a relatively K{\"a}hler metric \(\omega_X\) whose restriction \(\omega_b\) to each fibre \(X_b\) has constant scalar curvature.
\end{definition}

If every class \([\omega_b]\) admits a cscK metric, it is then a non-trivial fact that there exists a relatively cscK metric in \([\omega_X]\); see \cite[Lemma 3.8]{DS_osc}, which uses the deformation theory for cscK metrics of \cite{Sze10,Bro11}. Thus, the first assumption is equivalent to assuming the existence of a relatively cscK metric in the relatively K{\"a}hler class.

 By the partial Cartan decomposition in Theorem \ref{thm:decomposition}, the second assumption is equivalent to assuming that the space of holomorphic vector fields \(H^0(X_b,TX_b)\) on the fibre \(X_b\) has complex dimension independent of \(b\in B\).

\begin{proposition}[{\cite[p.~13]{DS_osc}}]\label{prop:bundle_E}
	If \(\dim_{\mathbb{R}}E_b\) is independent of \(b\), then there exists a smooth vector bundle \(E\to B\) whose fibre over \(b\in B\) is \(E_b\), such that a smooth section of \(E\) corresponds precisely to a smooth function on \(X\) restricting to a real holomorphy potential on each fibre with zero integral.
\end{proposition}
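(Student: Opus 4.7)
The plan is to realise $E \to B$ as the bundle of kernels of a smooth family of self-adjoint elliptic operators on the fibres, namely the fibrewise Lichnerowicz operators, and to appeal to the standard fact that kernels of such a family form a smooth vector bundle when their dimension is locally constant.

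First I would set up a smooth local model. Fix $b_0 \in B$ and a small ball $U \ni b_0$. By Ehresmann's theorem, shrinking $U$ if necessary, there is a diffeomorphism $\Phi : U \times X_{b_0} \xrightarrow{\sim} \pi^{-1}(U)$ commuting with projection to $U$. Pulling back via $\Phi$, the relatively K\"ahler form $\omega_X$ restricts over each fibre to a K\"ahler form $\omega_b$ on the fixed smooth manifold $X_{b_0}$ (with a varying complex structure $J_b$), depending smoothly on $b \in U$. The fibrewise Lichnerowicz operator then becomes a family $\{L_b\}_{b \in U}$ of fourth-order self-adjoint elliptic operators on $C^\infty(X_{b_0}, \mathbb{R})$ whose coefficients depend smoothly on $b$. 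By the Matsushima--Lichnerowicz framework recalled above, the kernel of $L_b$ is the space of real holomorphy potentials on $(X_{b_0}, J_b, \omega_b)$; since constants always lie in this kernel and $E_b$ is the subspace of zero-integral potentials, we have $\dim \ker L_b = \dim E_b + 1$, which is constant in $b$ by assumption.

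Next I would invoke the standard spectral-projection argument: for each $b_0$ there exists $\varepsilon > 0$ such that $\varepsilon \notin \mathrm{spec}(L_b)$ for all $b$ in a (possibly smaller) neighbourhood $U$ of $b_0$, and then the spectral projection
\[
P_b := \frac{1}{2\pi i} \oint_{|z|=\varepsilon} (z - L_b)^{-1}\, dz
\]
is a smoothly varying family of finite-rank projections onto $\ker L_b$. Projecting the integral-zero condition gives a smoothly varying family of projections onto $E_b$, yielding a smooth local trivialisation $U \times E_{b_0} \to \bigsqcup_{b \in U} E_b$. This gives $E := \bigsqcup_{b \in B} E_b$ the structure of a smooth real vector bundle over $B$ of rank $\dim E_{b_0}$, with transition functions that are smooth by uniqueness of the spectral projections.

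Finally, the correspondence between smooth sections of $E$ and smooth functions on $X$ restricting fibrewise to zero-integral holomorphy potentials is essentially built into the construction: in the local model above, a smooth section of $E$ over $U$ is a smoothly $b$-dependent element $u_b \in E_b \subset C^\infty(X_{b_0}, \mathbb{R})$, which via $\Phi$ defines a smooth function on $\pi^{-1}(U)$; conversely any such function restricts fibrewise to $E_b$ by definition and is therefore a smooth section. The main obstacle, and the one point requiring real input, is verifying that the family $\{L_b\}$ depends smoothly on $b$ in the sense needed for the spectral projection argument — this in turn reduces to checking that the fibrewise metric $\omega_b$ and Levi-Civita connection, from which $L_b$ is built algebraically, vary smoothly under the Ehresmann trivialisation, which is immediate since $\omega_X$ is globally smooth on $X$.
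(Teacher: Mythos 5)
Your proof is correct and is essentially the argument behind the result as the paper uses it: the paper itself gives no proof of Proposition \ref{prop:bundle_E}, importing it by citation from \cite{DS_osc}, where the bundle structure on \(E\) is likewise obtained by viewing the fibrewise Lichnerowicz operators as a smooth family of self-adjoint elliptic operators whose kernels (the holomorphy potentials, with the constants split off by the zero-integral projection) have locally constant dimension. The one point worth flagging is that \(\mathcal{D}_b^*\mathcal{D}_b\) preserves real-valued functions only when \(\omega_b\) is cscK --- which holds in the paper's setting by the standing assumption that each fibre admits a cscK metric together with \cite[Lemma 3.8]{DS_osc}, and in any case your argument survives for merely relatively K{\"a}hler \(\omega_X\) upon replacing \(L_b\) by the real self-adjoint elliptic operator \(u\mapsto\operatorname{Re}(\mathcal{D}_b^*\mathcal{D}_b u)\), whose kernel is still exactly the real holomorphy potentials since \(\langle\operatorname{Re}(\mathcal{D}_b^*\mathcal{D}_b u),u\rangle=\|\mathcal{D}_b u\|^2\) for real \(u\).
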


The \(L^2\)-inner product on smooth real-valued functions on \(X\) is given by \begin{equation}\label{eq:inner_product}
\langle f,g\rangle:=\int_X fg\,\omega_X^m\wedge\omega_B^n,
\end{equation} where we abusively write \(\omega_B\) for what is really \(\pi^*\omega_B\). There is then an orthogonal direct sum decomposition: \begin{equation}\label{eq:orthog_decomp}
C^\infty(X)= C^\infty(B)\oplus C^\infty(E)\oplus C^\infty_R(X),
\end{equation} where \(C^\infty(B)\) is the space of smooth functions on \(B\) pulled back to \(X\), \(C^\infty(E)\) denotes the smooth sections of \(E\to B\), and \(C^\infty_R(X)\) is the orthogonal complement of \(C^\infty(B)\oplus C^\infty(E)\) in \(C^\infty(X)\).

\begin{definition}
	Given a fixed relatively cscK metric \(\omega_X\), define \(\mathcal{K}_E\subset C^\infty(X,\R)\) to be the space of smooth functions \(\varphi:X\to\R\) such that \(\omega_X+i\d\db\varphi\) is also relatively cscK.
\end{definition}

The notation \(\K_E\) comes from the following computation of its tangent space:

\begin{proposition}[{\cite[Lemma 4.20]{DS_uniqueness}}]\label{prop:K_E_tangent_decomp}
	Let \(\varphi_t:[0,1]\to\K_E\) be a smooth path. If \(E_t\) denotes the bundle of fibrewise holomorphy potentials corresponding to \(\omega_t:=\omega_X+i\d\db\varphi_t\), then \(\dot{\varphi}_t\in C^\infty(B)\oplus C^\infty(E_t)\) for all \(t\).
\end{proposition}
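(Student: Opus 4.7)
The strategy is to differentiate the fibrewise cscK condition in $t$ and apply the standard linearisation of scalar curvature on each fibre; this forces $\dot\varphi_t$ to restrict to a holomorphy potential on every fibre, from which the splitting follows by extracting the fibrewise average.

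First I would observe that $(\omega_X+i\d\db\varphi_t)|_{X_b}$ differs from $\omega_b$ by a fibrewise exact form, so the K\"ahler class $[\omega_{t,b}]$ on $X_b$ is independent of $t$. Consequently the average scalar curvature $\hat S_b$ on $X_b$, being topological, is also independent of $t$, and the condition $\varphi_t\in\K_E$ reads
\[
S(\omega_{t,b})\equiv \hat S_b \quad \text{on } X_b,\quad \text{for all }t,b.
\]
Differentiating this identity in $t$ and applying the standard linearisation of scalar curvature (see e.g.\ \cite{Sze14}),
\[
\frac{d}{dt}S(\omega_{t,b}) = -\mathcal{D}_{t,b}^*\mathcal{D}_{t,b}(\dot\varphi_t|_{X_b}) + \tfrac{1}{2}\langle \nabla(\dot\varphi_t|_{X_b}),\nabla S(\omega_{t,b})\rangle_{\omega_{t,b}},
\]
the second term vanishes because $\nabla S(\omega_{t,b})=\nabla\hat S_b=0$ on $X_b$, and the left-hand side is zero. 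Hence $\mathcal{D}_{t,b}^*\mathcal{D}_{t,b}(\dot\varphi_t|_{X_b})=0$, and since the self-adjoint elliptic operator $\mathcal{D}_{t,b}^*\mathcal{D}_{t,b}$ has the same kernel as $\mathcal{D}_{t,b}$, we conclude $\dot\varphi_t|_{X_b}\in\ker\mathcal{D}_{t,b}$. Thus $\dot\varphi_t|_{X_b}$ is a real holomorphy potential on $(X_b,\omega_{t,b})$ for every $b\in B$.

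To produce the desired decomposition, I would then extract the fibrewise average
\[
g(b):=\frac{\int_{X_b}\dot\varphi_t\,\omega_{t,b}^m}{\int_{X_b}\omega_{t,b}^m}.
\]
Smoothness of the fibration and of the path $\varphi_t$ ensures $g\in C^\infty(B)$; pulled back, it gives an element of $C^\infty(B)\subset C^\infty(X)$. The remainder $\dot\varphi_t - \pi^*g$ still restricts fibrewise to a real holomorphy potential (constants being holomorphy potentials) and now has zero fibre integral, so by Proposition \ref{prop:bundle_E} it defines a smooth section of $E_t\to B$. This gives the desired splitting $\dot\varphi_t=\pi^*g+(\dot\varphi_t-\pi^*g)\in C^\infty(B)\oplus C^\infty(E_t)$.

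The core of the argument is the fibrewise linearisation, which is entirely standard, so I do not expect serious trouble there. The one point I would take most care with is verifying that $g$ and the residual term define a smooth function on $B$ and a smooth section of $E_t$ respectively; this boils down to smooth dependence of the $L^2$-projection onto $\ker \mathcal{D}_{t,b}$ in the parameters, which follows from the constancy of $\dim E_{t,b}$ combined with standard elliptic theory.
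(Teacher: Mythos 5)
Your proof is correct and follows the same route as the source: the paper does not prove this proposition itself but quotes it from \cite[Lemma 4.20]{DS_uniqueness}, where the argument is likewise to differentiate the fibrewise cscK equation, use that the linearisation of scalar curvature at a cscK metric reduces to the Lichnerowicz operator \(\mathcal{D}^*\mathcal{D}\) (the gradient term vanishing since \(S\) is constant and the fibrewise class, hence \(\hat S_b\), is \(t\)-independent), and conclude \(\dot\varphi_t|_{X_b}\in\ker\mathcal{D}_{t,b}\). Your final step, splitting off the fibrewise average and invoking Proposition \ref{prop:bundle_E} to identify the zero-mean remainder with a smooth section of \(E_t\), is exactly the intended mechanism, and it already subsumes the smoothness concern you raise at the end.
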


We often think of functions in \(C^\infty(B)\subset C^\infty(X)\) as in some sense irrelevant, which is partly justified by Lemma \ref{lem:base_form} and Theorem \ref{thm:OSC_uniqueness} below. We will see in Section \ref{sec:geodesic_equation} that the smooth functions on \(B\) play a role analogous to that of the constant functions in the usual theory of K{\"a}hler metrics and potentials.

\begin{lemma}[{\cite[Lemma 3.9]{DS_osc}}]\label{lem:base_form}
	Let \(\eta\) and \(\rho\) be two smooth real closed \((1,1)\)-forms on \(X\) such that \([\rho]=[\eta]\) and \(\rho|_{X_b}=\eta|_{X_b}\) for all \(b\in B\). Then there exists a smooth function \(f:B\to\R\) such that \[\rho=\eta+i\d\db\pi^*f.\]
\end{lemma}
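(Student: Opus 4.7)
The plan is to build a global $\partial\bar\partial$-potential for $\alpha := \rho - \eta$ on all of $X$ and then, using the hypothesis that $\alpha$ vanishes on every fibre, to show this potential is fibrewise constant and therefore descends to a smooth function on $B$.

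First, I would observe that $\alpha$ is a closed real $(1,1)$-form on the compact K\"ahler manifold $X$. In the setting where the lemma is intended (e.g.\ when $\rho$ and $\eta$ are two representatives of a single relatively K\"ahler class, as in all the usual applications), we have $[\alpha] = 0 \in H^{1,1}(X,\R)$, so the global $\partial\bar\partial$-lemma on $X$ furnishes $h \in C^\infty(X,\R)$, unique up to an additive constant, with $\alpha = i\partial\bar\partial h$. It does seem that this cohomological input is necessary for the conclusion: if $\rho = \pi^*\omega_B$ and $\eta = 0$ with $[\omega_B] \neq 0$, the fibrewise condition holds but $\rho - \eta$ is manifestly not of the form $i\partial\bar\partial\pi^*f$, so I would take this implicit hypothesis for granted.

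Next, I would restrict $\alpha = i\partial\bar\partial h$ to a fibre $X_b$. Because $X_b$ is a closed complex submanifold of $X$, this restriction equals $i\partial_{X_b}\bar\partial_{X_b}(h|_{X_b})$, and by hypothesis it vanishes. Hence $h|_{X_b}$ is pluriharmonic on the compact K\"ahler manifold $X_b$, and the maximum principle forces it to be constant. Setting $f(b)$ to be that constant defines a function $f : B \to \R$; locally on $B$ any smooth section $\sigma$ of the submersion $\pi$ gives $f = h \circ \sigma$, which shows $f$ is smooth. By construction $h = \pi^* f$, and consequently $\rho - \eta = i\partial\bar\partial\pi^* f$ as required.

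I do not anticipate a substantial obstacle: the main content is the pluriharmonic-implies-constant reduction on the compact fibres, together with the global $\partial\bar\partial$-lemma; the only delicate point is confirming the cohomological hypothesis that makes the latter applicable, which in the paper's intended context is automatic.
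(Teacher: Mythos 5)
Your proof is correct and is essentially the same argument as in the cited Dervan--Sektnan lemma: apply the global \(\partial\bar\partial\)-lemma on the compact K\"ahler manifold \(X\) to \(\rho-\eta\), observe that the resulting potential is pluriharmonic, hence constant, on each compact fibre, and verify smoothness of the induced function on \(B\) using local sections of the submersion. You are also right to flag the tacit cohomological hypothesis: as quoted, the statement needs \([\rho]=[\eta]\) (which holds in every application in the paper, where both forms lie in the class \([\omega_X]\)), since otherwise \(\rho=\pi^*\omega_B\), \(\eta=0\) gives a counterexample exactly as you describe.
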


 If \(\omega_X\) and \(\omega_X'\) are relatively cscK metrics with \([\omega_X]=[\omega_X']\), then there exists a function \(\phi\in\mathcal{K}_E\) such that \(\omega_X'=\omega_X+i\d\db\phi\). Note that \(\omega_X+i\d\db\phi=\omega_X+i\d\db\psi\) if and only if \(\phi\) and \(\psi\) differ by a real constant. Hence \(\K_E/\R\) parametrises all relatively cscK metrics. From Lemma \ref{lem:base_form}, two relatively cscK metrics in the same class have the same restriction to every fibre if and only if they differ by \(i\d\db\pi^*f\) for some \(f\in C^\infty(B)\).
 

Recently, Dervan and Sektnan \cite{DS_osc} have introduced the notion of an \emph{optimal symplectic connection}. This is a partial differential equation on relatively cscK metrics which plays the role for fibrations that the cscK equation does for varieties, and also the Hermite--Einstein equation for vector bundles. In fact, when \(X=\P(V)\) is a projectivised vector bundle the optimal symplectic connection equation reduces to the Hermite--Einstein equation \cite[Proposition 3.17]{DS_osc}. Most importantly, when an optimal symplectic connection exists it gives a canonical choice of relatively cscK metric in a fixed relatively K{\"a}hler class.  To describe this equation, we must introduce some notation.

Firstly, any relatively K{\"a}hler metric \(\omega_X\) defines a smooth splitting of the tangent bundle \(TX\cong\mathcal{V}\oplus\mathcal{H}\), where \(\mathcal{V}\) is the kernel of the projection \(d\pi:TX\to TB\), and \[\mathcal{H}:=\{u\in TX:\omega_X(u,v)=0\text{ for all }v\in\mathcal{V}\}.\] One then obtains a canonical splitting of all tensors on \(X\) into horizontal, vertical, and mixed components. The \emph{symplectic curvature} of \(\omega_X\) is the curvature of the Ehresmann connection \(\mathcal{H}\subset TX\). Explicitly, this is a two-form \(F_\H\) on the base \(B\) with values in the fibrewise hamiltonian vector fields, defined by \[F_\H(v_1,v_2):=[v_1^\#,v_2^\#]_{\mathcal{V}}.\] Here \(v_1,v_2\) are vector fields on the base \(B\), \(v_i^\#\) denotes the horizontal lift of \(v_i\) to \(X\), \([\,,]\) is the usual Lie bracket of vector fields, and the subscript \(\mathcal{V}\) denotes the vertical component of a vector field. 

Given a hamiltonian vector field \(\xi_b\) on a fibre \(X_b\), we denote by \(\nu_b^*\xi_b\) the corresponding hamiltonian function on \((X_b,\omega_b)\) with integral zero. If \(\xi\) is a vertical vector field on \(X\) that is fibrewise hamiltonian, we then write \(\nu^*\xi\) for the smooth function on \(X\) equal to \(\nu_b^*\xi_b\) on each fibre \(X_b\). Since \(F_{\mathcal{H}}\) is a 2-form on \(B\) with values in the fibrewise hamiltonian vector fields, \(\nu^*F_\H\) is then a 2-form on \(B\) with values in the fibrewise hamiltonian functions, which can be equivalently considered has a horizontal 2-form on \(X\).

Given a 2-form \(\alpha\) on \(X\), we define \[\Lambda_{\omega_B}\alpha:=\frac{n\,\alpha_{\H}\wedge\pi^*\omega_B^{n-1}}{\pi^*\omega_B^n},\] where \(\alpha_\H\) is the horizontal component of \(\alpha\) and the division is taken in \(\det\H=\Lambda^n\H\).

The metric \(\omega_X\) also determines a vertical Laplacian \(\Delta_{\mathcal{V}}\), defined by \[(\Delta_\V f)|_{X_b}:=\Delta_b(f|_{X_b})\] where \(\Delta_b\) is the K{\"a}hler Laplacian on the fibre \((X_b,\omega_b)\).

Lastly, as \(\omega_X\) is relatively K{\"a}hler it determines a hermitian metric on the vertical tangent bundle \(\mathcal{V}\). This in turn determines a hermitian metric on the determinant bundle \(\Lambda^m\mathcal{V}\cong-K_{X/B}\), whose curvature we denote by \(\rho\).

\begin{definition}[\cite{DS_osc}]\label{def:OSC}
	A relatively cscK metric \(\omega_X\) is an \emph{optimal symplectic connection} if it satisfies the equation \begin{equation}\label{eq:OSC}
	p(\Delta_\mathcal{V}\Lambda_{\omega_B}\nu^*F_\H+\Lambda_{\omega_B}\rho)=0,
	\end{equation} where \(p\) denotes the \(L^2\)-orthogonal projection to \(C^\infty(E)\subset C^\infty(X)\) with respect to the inner product \eqref{eq:inner_product}, \(F_\H\) is the symplectic curvature of the metric \(\omega_X\), and \(\rho\) is the curvature of the hermitian metric on \(\Lambda^m\mathcal{V}\) determined by \(\omega_X\).
\end{definition}

We will often denote by \(\theta\) the smooth function \(\Delta_\mathcal{V}\Lambda_{\omega_B}\nu^*F_\H+\Lambda_{\omega_B}\rho\) appearing in the optimal symplectic connection equation. Hence the optimal symplectic connection equation may be written as \(p(\theta)=0\). We remark that all the symbols we have defined that appear in this equation depend on \(\omega_X\), including the projection operator \(p\) and the contraction operator \(\Lambda_{\omega_B}\).

\begin{definition}\label{def:pi_aut}
	An \emph{automorphism of \(\pi:X\to B\)} is a biholomorphism \(g:X\to X\) satisfying \(\pi\circ g=\pi\). We let \(\Aut(\pi)\) be the complex Lie group of automorphisms of \(\pi\), and let \(\Aut_0(\pi)\) be its identity component.
\end{definition}

\begin{remark}
	One can easily see that if \(\omega_X\) is an optimal symplectic connection then so is \(\omega_X+\pi^*i\d\db f\) for any \(f\in C^\infty(B)\). To see this, note that every term of the optimal symplectic connection equation is unchanged by adding \(\pi^*i\d\db f\), since the horizontal subbundle satisfies \begin{align*}
	\mathcal{H}:&=\{u\in TX:\omega_X(u,v)=0\text{ for all }v\in\mathcal{V}\} \\
	&= \{u\in TX:(\omega_X+\pi^*i\d\db f)(u,v)=0\text{ for all }v\in\mathcal{V}\},
	\end{align*} and the vertical component of \(\omega_X\) agrees with that of \(\omega_X+\pi^*i\d\db f\). All terms in the optimal symplectic connection equation are derived from these two pieces of information. 
	
	Note also that if \(g\in\Aut(\pi)\) then \(g^*\omega_X\) is also an optimal symplectic connection. If \(g\in\Aut_0(\pi)\) is in the identity component, then \(g^*\omega_X\) will be in the same relatively K{\"a}hler class as \(\omega_X\).
\end{remark}

Dervan and Sektnan have proven various results which show these metrics are indeed canonical, including a description of the automorphism group \(\Aut_0(\pi)\) of \(\pi:X\to B\) analogous to the Matsushima--Lichnerowicz theorem \cite[Theorem 4.3, Corollary 4.4]{DS_uniqueness}. Here, we are especially interested in the following uniqueness result: \begin{theorem}[{\cite[Theorem 1.1]{DS_uniqueness}}]\label{thm:OSC_uniqueness}
	If \(\omega_X\) and \(\omega_X'\) are two optimal symplectic connections in the same relatively K{\"a}hler class, then there exist \(g\in\Aut_0(\pi)\) and \(f\in C^\infty(B)\) such that \[\omega_X'=g^*\omega_X+\pi^*i\d\db f.\]
\end{theorem}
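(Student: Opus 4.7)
The plan is to follow the classical strategy of deducing uniqueness from convexity of a log-norm functional along geodesics, exactly parallel to Donaldson's proof of uniqueness of Hermite--Einstein metrics and Berman--Berndtsson's proof of Theorem \ref{thm:cscK_uniqueness}. Given two optimal symplectic connections $\omega_X$ and $\omega_X'$ in the same relatively K{\"a}hler class, I would first write $\omega_X' = \omega_X + i\d\db\phi$ for a real function $\phi$; the $\d\db$-lemma produces $\phi$, and since both metrics are fibrewise cscK the potential automatically lies in $\K_E$. Invoking the existence and uniqueness of smooth geodesics in $\K_E$ (the first theorem stated above), connect $0$ to $\phi$ by the unique smooth geodesic $\phi_t$, $t \in [0,1]$. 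The rest of the argument amounts to reading $g$ and $f$ off from this geodesic.

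Next I would observe that $\N$ is constant along $\phi_t$. As already noted in the introduction, the critical points of $\N$ are precisely the optimal symplectic connections, so by hypothesis $\tfrac{d}{dt}\N(\phi_t)$ vanishes at both $t=0$ and $t=1$. Convexity of $\N$ along smooth geodesics (the second theorem stated above) says that $\tfrac{d}{dt}\N(\phi_t)$ is non-decreasing on $[0,1]$, and combined with the vanishing at the two endpoints it must therefore be identically zero. Hence $\N$ is constant along $\phi_t$, which is the essential rigidity the proof needs.

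The main obstacle is then to extract the desired automorphism and base function from this rigidity, and this is where the equality case of the convexity must be analysed carefully. Concretely, one expects a pointwise formula of the shape
\[\ddot{\N}(\phi_t) = \int_X |\mathcal{D}_t(\dot\phi_t)|^2_{\omega_t}\,\omega_t^m\wedge\omega_B^n \;+\; (\text{non-negative adiabatic term from the base}),\]
for some fibration analogue $\mathcal{D}_t$ of the Lichnerowicz operator, whose term-by-term vanishing should force $\dot\phi_t - \pi^*\dot f_t$ to be a section of the bundle $E_t$ of fibrewise holomorphy potentials, for some $\dot f_t \in C^\infty(B)$. At an OSC, the Matsushima-type theorem for $\Aut_0(\pi)$ established by Dervan--Sektnan \cite{DS_uniqueness} then promotes this fibrewise holomorphy potential to an element $v_0 \in \Lie\Aut_0(\pi)$ generating a genuine global automorphism of $\pi$, rather than merely a fibrewise one. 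Pinning down the exact non-negative integrand and justifying the promotion from fibrewise to global data is the delicate point of the proof.

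Once this is in hand, the explicit description of smooth geodesics in $\K_E$ via the fibrewise homogeneous-space bundle $H \to B$, together with the Riemannian decomposition of $\K_E$ into a flat $C^\infty(B)/\R$-factor and a non-positively curved factor, shows that $\phi_t$ must have the form $\pi^*f_t + \int_0^t \exp(sv_0)^*(u_0)\,ds$ up to an irrelevant real constant, in direct analogy with Proposition \ref{prop:holomorphy_geodesics}. Setting $g := \exp(v_0) \in \Aut_0(\pi)$ and $f := f_1 \in C^\infty(B)$, and appealing to Lemma \ref{lem:base_form} to absorb the remaining constant-in-the-base ambiguity, yields $\omega_X' = g^*\omega_X + \pi^*i\d\db f$, which is exactly the statement of the theorem.
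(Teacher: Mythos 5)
Your overall route---connect the two metrics by the unique smooth geodesic in $\K_E$, use vanishing of $\tfrac{d}{dt}\N(\varphi_t)$ at both endpoints plus convexity to conclude $\N$ is affine along the geodesic, then analyse the equality case---is exactly the paper's. But the equality-case analysis, which you rightly flag as the delicate point, is where your sketch goes wrong, in two ways. First, the conclusion you propose to extract from the vanishing of the second derivative, namely that $\dot\varphi_t-\pi^*\dot f_t$ lies in $C^\infty(E_t)$, is vacuous: by Proposition \ref{prop:K_E_tangent_decomp} the tangent vector along \emph{any} smooth path in $\K_E$ already lies in $C^\infty(B)\oplus C^\infty(E_t)$, so this carries no information. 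The actual content of the equality case (Theorem \ref{thm:fib_convexity}) is that along a geodesic
\[\frac{d^2}{dt^2}\N(\varphi_t)=\int_X|\mathcal{R}_t\dot\varphi_t|^2\,\omega_t^m\wedge\omega_B^n,\qquad \mathcal{R}_t\psi:=\db\nabla^{1,0}_{\V,t}\psi,\]
with no extra base term; crucially, $\mathcal{R}_t$ applies the \emph{global} $\db$-operator on $X$ to the vertical gradient, so it is not a fibrewise Lichnerowicz-type operator. Its vanishing therefore forces the fibrewise-holomorphic vertical field $\nabla^{1,0}_{\V,t}\dot\varphi_t$ to be holomorphic on all of $X$, which is precisely the ``promotion from fibrewise to global'' you were after. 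Second, your proposed mechanism for that promotion---the Matsushima-type theorem for $\Aut_0(\pi)$---cannot do this job: that theorem describes the structure of the Lie algebra of $\Aut_0(\pi)$, but it does not convert a merely fibrewise-holomorphic vector field into a globally holomorphic one. In the paper the promotion is accomplished entirely by the identity $\mathcal{R}_0\dot\psi_0=0$: the field $v:=\nabla^{1,0}_{\V}\dot\psi_0$ is then a global holomorphic vector field which is vertical, so its flow automatically preserves $\pi$ and $g:=\exp(v)\in\Aut_0(\pi)$ with no further input.

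Once this is repaired, the rest of your assembly matches the paper: decompose the geodesic as $\varphi_t=\psi_t+tf$ with $\dot\psi_t\in C^\infty(E_t)$ and $f\in C^\infty(B)$, observe $\psi_t=\int_0^t\exp(sv)^*\dot\psi_0\,ds$, and conclude $\omega_X'=\exp(v)^*\omega_X+i\d\db\pi^*f$ (no appeal to Lemma \ref{lem:base_form} is needed at this stage; it is already built into the geodesic construction). The point worth internalising is that the second-variation formula must actually be derived---the paper does so by linearising the optimal symplectic connection operator, whose order-$k^{-1}$ Lichnerowicz term is exactly $\mathcal{R}^*\mathcal{R}$---because if the integrand had been a fibrewise operator as your sketch guesses, its vanishing would be automatic along every geodesic in $\K_E$ and the uniqueness proof would collapse.
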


Dervan and Sektnan achieve this result by deforming to already known results for twisted cscK metrics. Using the method of geodesics, we will be able to provide a much simplified proof of this result, analogous to the proofs of Donaldson for uniqueness of Hermite--Einstein metrics \cite{Don85} and of Berman--Berndtsson for uniqueness of cscK metrics \cite{BB17}.

The analogue of the Mabuchi functional for the setting of fibrations is given by the following:

\begin{proposition}[{\cite[Lemma 4.28]{DS_uniqueness}}]\label{prop:log-norm}
	There exists a well-defined functional \(\N:\K_E\to\R\) whose derivative along any smooth path \(\varphi:[0,1]\to\K_E\) is given by \[\frac{d\mathcal{N}(\varphi_t)}{dt}=-\int_X\dot{\varphi}_tp_t(\theta_t)\,\omega_t^m\wedge\omega_B^n.\] Here \(p_t(\theta_t)\) is the left-hand side of equation \eqref{eq:OSC} for \(\omega_t:=\omega_X+i\d\db\varphi_t\).
\end{proposition}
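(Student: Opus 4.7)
The plan is to construct $\N$ as the primitive of a naturally defined closed 1-form on $\K_E$. Define the 1-form $\alpha$ on $\K_E$ by
$$\alpha_\varphi(\psi) := -\int_X \psi\, p_\varphi(\theta_\varphi)\, \omega_\varphi^m \wedge \omega_B^n,$$
for $\varphi \in \K_E$ and $\psi \in T_\varphi \K_E$, where $\omega_\varphi := \omega_X + i\d\db\varphi$ and $\theta_\varphi, p_\varphi$ are computed with respect to $\omega_\varphi$. Fixing a basepoint $\varphi_0 \in \K_E$ and assuming closedness of $\alpha$, one sets
$$\N(\varphi) := -\int_0^1 \int_X \dot\varphi_t\, p_t(\theta_t)\, \omega_t^m \wedge \omega_B^n\, dt$$
for any smooth path $\varphi_t$ from $\varphi_0$ to $\varphi$; the derivative formula in the proposition then holds tautologically. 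Path-connectedness of $\K_E$, needed to ensure such a path exists for every $\varphi$, follows from Theorem 3.1 above; at this stage one may simply restrict $\N$ to the path-component of $\varphi_0$.

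The heart of the proof is therefore the closedness $d\alpha = 0$. For a smooth two-parameter family $\varphi_{s,t}$ in $\K_E$, one forms the difference
$$\frac{\partial}{\partial s}\alpha_{\varphi_{s,t}}(\partial_t\varphi_{s,t}) - \frac{\partial}{\partial t}\alpha_{\varphi_{s,t}}(\partial_s\varphi_{s,t}).$$
Expanding each side by Leibniz, the terms containing the symmetric mixed derivative $\partial_s\partial_t\varphi$ cancel, and closedness reduces to the symmetry of the linearisation $\mathcal{L}_\varphi$ of the map $\varphi \mapsto p_\varphi(\theta_\varphi)\omega_\varphi^m$; concretely, one needs
$$\int_X \psi_2\, \mathcal{L}_\varphi(\psi_1)\wedge\omega_B^n = \int_X \psi_1\,\mathcal{L}_\varphi(\psi_2)\wedge\omega_B^n$$
for all $\psi_1, \psi_2 \in T_\varphi\K_E$.

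Conceptually, this symmetry is the standard Fujiki--Donaldson moment-map picture transplanted to the fibration setting: $p(\theta)$ arises as a moment map for the action of a group of fibrewise Hamiltonian diffeomorphisms on a suitable space of almost-complex structures compatible with the relatively K{\"a}hler form, and the Hessian of a moment map is automatically formally self-adjoint with respect to the compatible pairing. The main obstacle is to execute this rigorously, which requires either identifying the precise moment-map setup or else verifying the identity by direct computation. The direct route uses the variational formulae for the symplectic curvature $F_\H$, the vertical Laplacian $\Delta_\V$, the curvature $\rho$ of $-K_{X/B}$, and, crucially, the projection $p_\varphi$ itself, which depends on $\varphi$ through both the $L^2$-inner product and the varying bundle $E_\varphi$ of fibrewise holomorphy potentials. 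Much of this variational apparatus has already been developed in \cite{DS_osc,DS_uniqueness}, so in practice the proof reduces to assembling those computations and verifying the required cancellation.
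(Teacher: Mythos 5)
Your overall strategy---realising \(\N\) as a primitive of the one-form \(\alpha\)---is legitimate and genuinely different from the proof this paper relies on, but as written it has two gaps. The first is that the heart of the argument, closedness of \(\alpha\), is never actually established: you reduce it to formal self-adjointness of the linearisation and then appeal to a moment-map heuristic, deferring the computation. Note that the operator to be symmetrised must include the variation of the volume form, i.e.\ it is \[\psi\longmapsto\Bigl(-\mathcal{R}_\varphi^*\mathcal{R}_\varphi\psi+\langle\d_\V p_\varphi(\theta_\varphi),\d_\V\psi\rangle_{\V,\varphi}+p_\varphi(\theta_\varphi)\,\Delta_{\V,\varphi}\psi\Bigr)\,\omega_\varphi^m\wedge\omega_B^n.\] Granting the linearisation proposition of Section \ref{functional} (whose statement already absorbs the variation of the projection \(p_\varphi\) and of \(\theta_\varphi\), and whose proof makes no use of \(\N\), so there is no circularity), the symmetry does follow: \(\mathcal{R}_\varphi^*\mathcal{R}_\varphi\) is self-adjoint, and fibrewise integration by parts combines the last two terms into \(-\int_X p_\varphi(\theta_\varphi)\langle\d_\V\psi_1,\d_\V\psi_2\rangle_{\V,\varphi}\,\omega_\varphi^m\wedge\omega_B^n\), which is exactly the manipulation carried out in the proof of Theorem \ref{thm:fib_convexity}; but this needs to be executed, not announced, since it is the entire content of the proposition. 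The second gap is topological and more serious as a matter of logic: closedness plus path-connectedness does not make the path integral single-valued. A closed one-form on a path-connected manifold can have nontrivial periods, in which case your \(\N\) is only defined up to the period group. Since \(\K_E\) is \emph{not} a convex subset of \(C^\infty(X)\), you cannot borrow the usual Mabuchi-functional homotopy argument verbatim; you need every loop in \(\K_E\) to be contractible. This is in fact true---by Section \ref{sec:geodesic_equation} one has \(\K_E\cong C^\infty(H)\times C^\infty(B)\) with \(C^\infty(H)\cong C^\infty(E)\) a vector space, so \(\K_E\) is contractible---but your proposal nowhere invokes it, and restricting to ``the path-component of \(\varphi_0\)'' addresses existence of connecting paths, not monodromy.

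For comparison: the paper imports this statement from \cite{DS_uniqueness}, where well-definedness is obtained by a mechanism that sidesteps both issues. There \(\N\) arises as the \(k^{n-1}\)-coefficient of the expansion \(\M_k=k^n\mathcal{F}+k^{n-1}\N+\O(k^{n-2})\) of the Mabuchi functional of \(\omega_k=\omega_X+k\omega_B\) restricted to \(\K_E\) (recorded as Proposition \ref{prop:Mabuchi_expansion} above), so path-independence is inherited from that of the Mabuchi functional, which lives on the convex space of K\"ahler potentials, and the derivative formula then follows from the scalar curvature expansion of \cite[Proposition 2.4]{DS_moduli} (the tangent space description of Proposition \ref{prop:K_E_tangent_decomp} entering to control the subleading terms). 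Your intrinsic route is attractive and, with the two repairs above, can be completed using only results already in the paper; as it stands, however, the closedness computation and the period question are genuine gaps.
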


The critical points of \(\N\) correspond to optimal symplectic connections in the class \([\omega_X]\). The minus sign in the definition ensures that the functional will be convex along smooth geodesics, rather than concave. 

Our techniques will allow us to generalise the following result of Dervan--Sektnan to arbitrary elements \(\varphi\) of \(\K_E\); see Theorem \ref{thm:boundedness} below for the generalisation.

\begin{theorem}[{\cite[Theorem 4.22]{DS_uniqueness}}]\label{thm:DS_boundedness}
	Suppose there exists an optimal symplectic connection \(\omega_X+i\d\db\psi\) in the class \([\omega_X]\). For any \(\varphi\in\K_E\) that is invariant under the isometry group of \(\omega_X+i\d\db\psi\), \[\N(\varphi)\geq\N(\psi).\]
\end{theorem}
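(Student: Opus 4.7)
The plan is to apply the two headline theorems announced earlier in the introduction — existence of smooth geodesics in $\K_E$ between any two points, together with convexity of $\N$ along such geodesics — to a geodesic joining the given optimal symplectic connection potential $\psi$ to $\varphi$. Since $\omega_X+i\d\db\psi$ is a critical point of $\N$, the derivative of $\N$ at the endpoint $\psi$ vanishes, and the convexity then forces $\N$ to be non-decreasing along the geodesic.

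More precisely, I would first invoke the smooth geodesic existence theorem to obtain a smooth path $\varphi_t\colon[0,1]\to\K_E$ with $\varphi_0=\psi$ and $\varphi_1=\varphi$, joining the two points by a geodesic. The variational formula
\[
\frac{d\N(\varphi_t)}{dt}=-\int_X \dot{\varphi}_t\, p_t(\theta_t)\,\omega_t^m\wedge\omega_B^n
\]
stated just before Theorem \ref{thm:DS_boundedness} evaluated at $t=0$ gives
\[
\frac{d\N(\varphi_t)}{dt}\bigg|_{t=0}=-\int_X \dot{\varphi}_0\, p_0(\theta_0)\,\omega_0^m\wedge\omega_B^n=0,
\]
because $\omega_0=\omega_X+i\d\db\psi$ is an optimal symplectic connection and so satisfies $p_0(\theta_0)=0$ by Definition \ref{def:OSC}. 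Setting $f(t):=\N(\varphi_t)$, convexity of $\N$ along smooth geodesics implies that $f\colon[0,1]\to\R$ is convex, hence $f'$ is non-decreasing, whence $f'(t)\geq f'(0)=0$ for all $t\in[0,1]$. Integrating yields $\N(\varphi)=f(1)\geq f(0)=\N(\psi)$, as required.

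It is worth remarking that this argument never uses the hypothesis that $\varphi$ is invariant under the isometry group of $\omega_X+i\d\db\psi$; that condition was essential in the original proof of Dervan--Sektnan, which proceeded by a deformation argument through twisted cscK metrics, but becomes redundant once the geodesic method is available. This is precisely the promised strengthening to arbitrary $\varphi\in\K_E$. The real obstacle in this scheme therefore lies not in the short argument above but in the two earlier inputs on which it rests: producing \emph{smooth} geodesics in $\K_E$ between arbitrary endpoints (which is where the fibrewise cscK hypothesis and the finite-dimensional symmetric space structure from the Matsushima--Lichnerowicz theorem must be exploited), and establishing convexity of $\N$ along them, which in turn requires a careful analysis of how the fibrewise projection $p$ and the function $\theta=\Delta_\V\Lambda_{\omega_B}\mu^*F_\H+\Lambda_{\omega_B}\rho_\H$ vary along a geodesic.
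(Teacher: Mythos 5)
Your argument is correct and is essentially the paper's own proof: in Section \ref{functional} the paper proves the strengthened Boundedness theorem in exactly this way, joining the optimal symplectic connection potential to an arbitrary \(\varphi\in\K_E\) by the unique smooth geodesic, observing that the derivative of \(\N\) vanishes at the critical endpoint since \(p_0(\theta_0)=0\), and concluding \(\N(\varphi)\geq\N(\psi)\) by convexity along the geodesic. Your closing remark also matches the paper, which explicitly notes that the isometry-invariance hypothesis of Theorem \ref{thm:DS_boundedness} becomes superfluous once smooth geodesics and convexity of \(\N\) are available.
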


\subsection{Stability of fibrations}\label{sec:fibration_stability}

In this section, we consider a flat projective morphism \(\pi:X\to B\) of complex projective varieties. We let \(L\) be an ample line bundle on \(B\) and \(H\) be a relatively ample line bundle with respect to \(\pi\). In the previous section we assumed that each fibre \(X_b\) admitted a cscK metric. In this section, we shall assume that the general fibre \((X_b,H_b)\) is K-polystable, where \(H_b:=H|_{X_b}\). There is a notion of stability for such fibrations introduced by Dervan and Sektnan \cite{DS_moduli}, which we shall now describe.

\begin{definition}
	Let \(\pi:(X,H)\to(B,L)\) be a fibration. A \emph{fibration degeneration} \((\X,\H)\to(B,L)\) of \((X,H)\to(B,L)\) consists of: \begin{enumerate}[label=(\roman*)]
		\item a variety \(\X\) together with a projective morphism \(\X\to B\times\C\) that has connected fibres, is flat over \(\C\), and is \(\C^*\)-equivariant over \(\C\),
		\item a \(\C^*\)-equivariant line bundle \(\H\to\X\) that is relatively ample over \(B\times\C\),
		\item A fixed isomorphism \((\X_1,\H_1)\cong(X,H^r)\) as fibrations over \(B\), for some \(r>0\) called the \emph{exponent} of the fibration degeneration.
	\end{enumerate}
\end{definition}

\begin{example}
	We can produce many fibration degenerations as follows. Since the map \(\pi:X\to B\) is flat and \(H\) is relatively ample, for all \(k\gg0\) the sheaf push-forward \(\pi_*H^k\) is a vector bundle on \(B\) whose fibre over \(b\in B\) is naturally isomorphic to \(H^0(X_b,H_b^k)\). Since \(H\) is relatively ample, there is then a natural embedding of \(X\) into the projectivised bundle \(\P((\pi_*H^k)^*)\to B\).
	
	For fixed \(k\gg0\), write \(V:=(\pi_*H^k)^*\) so that \(X\subset\P(V)\). Let \(W\subset V\) be a saturated coherent subsheaf.\footnote{Recall a subsheaf \(W\subset V\) is \emph{saturated} if the quotient sheaf \(V/W\) is torsion-free.} This gives rise in a well-known way to a degeneration of \(V\) into the splitting \(W\oplus(V/W)\). That is, there is a sheaf \(\mathcal{V}\) on \(B\times\C\), flat and \(\C^*\)-equivariant over \(\C\), whose general fibre is \(V\) and whose fibre over \(0\in\C\) is \(W\oplus(V/W)\); see, e.g. \cite[Lemma 1.10]{HK18}. Note that \(\mathcal{V}\) is not necessarily flat over \(B\), but is flat over \(\C\).
	
	Since \(X\subset\P(V)=\P(\mathcal{V}|_1)\), we can take the closure \(\overline{\C^*X}\subset\P(\mathcal{V})\) to obtain a scheme \(\X\subset\P(\mathcal{V})\) which is the total space of a fibration degeneration for \(X\). The line bundle \(\H\) is simply given by restricting the \(\mathcal{O}(1)\)-line bundle on \(\P(\mathcal{V})\) to \(\X\).
\end{example}

We will often denote fibration degenerations simply by \((\X,\H)\). As for test configurations, there are certain special classes of fibration degenerations. If there is a \(\C^*\)-equivariant isomorphism \(\X\cong X\times\C\) over \(B\times\C\) with \(X\times\C\) having the trivial \(\C^*\)-action, we call the fibration degeneration \emph{trivial}. If \(\X\cong X\times \C\) with the \(\C^*\)-action induced from a one-parameter subgroup of \(\Aut_0(\pi)\), we call \(\X\) a \emph{product fibration degeneration}.

Note that for every \(j\gg0\), we have a test configuration \((\X,jL+\H)\) for \((X,jL+H^k)\); here we have used additive notation for tensor products of line bundles, and omitted pullbacks. The Donaldson--Futaki invariant then admits an expansion \begin{equation}\label{eq:DF_expansion}
\DF(\X,jL+\H)=j^nW_0(\X,\H)+j^{n-1}W_1(\X,\H)+\O(j^{n-2}),
\end{equation} which is most easily observed from the intersection theory formula for the Donaldson--Futaki invariant.

\begin{definition}[{Dervan--Sektnan \cite{DS_moduli}}]
	The fibration \(\pi:(X,H)\to(B,L)\) is: \begin{enumerate}[label=(\roman*)]
		\item \emph{semistable} if \(W_0(\X,\H)\geq0\) for all fibration degenerations and \(W_1(\X,\H)\geq0\) whenever \(W_0(\X,\H)=0\),
		\item \emph{polystable} if it is semistable and whenever \(W_0(\X,\H)=W_1(\X,\H)=0\), there exists an open subset \(U\subset B\) whose complement has codimension at least 2, such that \((\X,\H)|_U\) normalises to a product fibration degeneration over \(U\),
		\item \emph{stable} if it is semistable and whenever \(W_0(\X,\H)=W_1(\X,\H)=0\), there exists an open subset \(U\subset B\) whose complement has codimension at least 2, such that degeneration \((\X,\H)|_U\) is trivial,
		\item \emph{unstable} if it is not semistable.
	\end{enumerate}
\end{definition}

\begin{remark}
	The definitions of stability and polystability here are slightly different to those in \cite{DS_moduli}. The reason for this is to exclude certain counterexamples due to M. Hattori from preventing any fibration from being polystable. Hattori's example is to take a non-trivial product fibration degeneration of \(X\) and to blow up a single fixed point in the central fibre \(\X_0\). If \(B\) has dimension at least 2, this will leave the invariants \(W_0\) and \(W_1\) of the degeneration, and the minimum norm considered by Dervan--Sektnan \cite[Definition 2.27]{DS_moduli} unchanged. It follows that a fibration with nontrivial automorphisms can never be polystable with respect to the definition of Dervan--Sektnan. Including the codimension 2 hypotheses in the definitions circumvents this kind of counterexample, since two degenerations that are isomorphic away from codimension 2 will have the same invariants \(W_0\) and \(W_1\). This invariance will be made clear from the intersection theoretic formulae in Proposition \ref{prop:W_1_int_theory}, as they all involve intersections with either \(L^n\) or \(L^{n-1}\). The definition is inspired by the vector and principal bundle setting, where it suffices to define stability by considering reductions of the structure group away from codimension 2; see \cite[Definition 1.1]{AB01} for example.
\end{remark}

Given our assumption that the general fibre \((X_b,H_b)\) is K-polystable, the condition that \(W_0(\X,\H)\geq0\) in the definition of stability is in fact superfluous:

\begin{proposition}[{\cite[Lemma 2.29]{DS_moduli}}]\label{prop:flat_locus}
	Let \((\X,\H)\) be a fibration degeneration of \((X,H)\). Then for general \(b\in B\), \[W_0(\X,\H)=\binom{m+n}{n}L^n\cdot\DF(\X_b,\H_b).\]
\end{proposition}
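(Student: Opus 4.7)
The plan is to combine the Wang--Odaka intersection formula for the Donaldson--Futaki invariant with a Leray/Fubini reduction to the generic fibre. By Remark \ref{rem:tc_normal}, we may replace $(\X, jL + \H)$ by its normalisation without changing $\DF$, and over a generic $b \in B$ the normalisation restricts to the normalisation of $\X_b$. Since $(\X, jL + \H)$ is a normal test configuration for $(X, jL + H^r)$ (with $r$ the exponent), the theorem of Wang--Odaka yields
\[
\DF(\X, jL+\H) = \tfrac{m+n}{m+n+1}\mu(X, jL+H)\bigl(\overline{jL+\H}\bigr)^{m+n+1} + \bigl(\overline{jL+\H}\bigr)^{m+n}\cdot K_{\overline{\X}/\P^1}.
\]
The goal is then to extract the coefficient of $j^n$ and identify it with $\binom{m+n}{n}L^n\cdot\DF(\X_b,\H_b)$.

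I would first expand everything as a polynomial in $j$. Since $L$ is pulled back from $B$, and $\dim B = n$, we have $L^k = 0$ for $k > n$, so in every relevant intersection the maximal $j$-power is exactly $j^n$. By the Leray/Fubini principle, for any class $\alpha$ on $\overline{\X}$ of the right dimension and generic $b \in B$,
\[
L^n\cdot\alpha = L^n_B\cdot (\alpha|_{\overline{\X_b}}).
\]
This reduces the $j^n$-contributions from $\overline{\H}^{m+1}$, $\overline{\H}^m\cdot K_{\overline{\X}/B\times\P^1}$, $H^m$ and $(-K_X)\cdot H^{m-1}$ on the total space to the analogous intersections on the generic compactified fibre $\overline{\X_b}$ and $X_b$. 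Two adjunction-type observations make this precise: first, writing $K_{\overline{\X}/\P^1} = K_{\overline{\X}/B\times\P^1} + p_1^*K_B$, the term involving $p_1^*K_B$ contributes $L^n\cdot K_B\cdot\overline{\H}^m = 0$ on dimensional grounds (the class $L^n\cdot K_B$ on $B$ has ``dimension'' $-1$); second, $K_{\overline{\X}/B\times\P^1}|_{\overline{\X_b}} = K_{\overline{\X_b}/\P^1}$ and $K_{X_b} = K_{X/B}|_{X_b}$ for generic $b$, which allows the numerator of $\mu(X, jL+H)$ to converge to $\mu(X_b,H_b)\cdot H_b^m$ times the appropriate constant.

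The main obstacle is then purely combinatorial: carefully tracking the binomial coefficients in
\[
\mu(X, jL+H) = \frac{-K_X\cdot(jL+H)^{m+n-1}}{(jL+H)^{m+n}}, \qquad (\overline{jL+\H})^{m+n+1}, \qquad (\overline{jL+\H})^{m+n}\cdot K_{\overline{\X}/\P^1},
\]
whose leading $j^n$-coefficients involve $\binom{m+n-1}{n}$, $\binom{m+n+1}{n}$ and $\binom{m+n}{n}$. The identity $\binom{m+n-1}{n}/\binom{m+n}{n} = m/(m+n)$, together with $(m+n)/(m+n+1)\cdot\binom{m+n+1}{n} = \binom{m+n}{n}(m+n)/(m+1)$, should combine so that the first term in the Wang--Odaka formula contributes $\binom{m+n}{n}L^n\cdot\frac{m}{m+1}\mu(X_b,H_b)\overline{\H_b}^{m+1}$ and the second contributes $\binom{m+n}{n}L^n\cdot\overline{\H_b}^m\cdot K_{\overline{\X_b}/\P^1}$. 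Summing and recognising the Wang--Odaka formula for $\DF(\X_b,\H_b)$ on the fibre of dimension $m$ yields the claimed identity.
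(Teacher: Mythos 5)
Note first that the paper does not prove this proposition itself --- it is quoted from \cite[Lemma 2.29]{DS_moduli} --- so the comparison is with the cited argument, which works directly with weight polynomials over the flat locus of \(B\) and requires no normality hypothesis. Your computation in the \emph{normal} case is correct and is essentially a re-derivation of the \(W_0\)-part of Proposition \ref{prop:W_1_int_theory} followed by a Leray/Fubini restriction to the general fibre: the binomial bookkeeping \(\tfrac{m+n}{m+n+1}\binom{m+n+1}{n}=\tfrac{m+n}{m+1}\binom{m+n}{n}\) and \(\binom{m+n-1}{n}/\binom{m+n}{n}=m/(m+n)\) checks out, the term \(L^n\cdot p_1^*K_B\cdot\overline{\H}^m\) does vanish for dimension reasons, and the identifications \(A_0(X,H)=\mu(X_b,H_b)\) and \(K_{\overline{\X}/B\times\P^1}|_{\overline{\X}_b}=K_{\overline{\X}_b/\P^1}\) hold for general \(b\). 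For a normal fibration degeneration with normal general fibre, summing the two contributions and recognising Wang--Odaka on the \(m\)-dimensional fibre gives exactly the claimed identity.

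The genuine gap is your very first step. You assert, citing Remark \ref{rem:tc_normal}, that passing to the normalisation does not change \(\DF\); but that remark (Ross--Thomas) gives only the inequality \(\DF(\tX,\widetilde{\L})\leq\DF(\X,\L)\), which can be strict. The discrepancy is supported on the locus where \(\tX\to\X\) fails to be an isomorphism, and when that locus dominates \(B\) its contribution to \(\DF(\X,jL+\H)\) enters at order \(j^n\), so \(W_0\) itself can drop under normalisation; symmetrically, \(\DF(\X_b,\H_b)\) can strictly exceed \(\DF(\widetilde{\X_b},\widetilde{\H}_b)\). Hence what your argument actually proves is \(W_0(\tX,\widetilde{\H})=\binom{m+n}{n}L^n\cdot\DF(\widetilde{\X_b},\widetilde{\H}_b)\), which is the statement for the normalisation, not the stated identity for \((\X,\H)\) with the honest fibre \(\X_b\). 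To close this you would need to show the two corrections cancel --- i.e.\ that the Ross--Thomas/Boucksom--Hisamoto--Jonsson divisorial correction terms on \(\X\) restrict, for general \(b\), to the corresponding correction terms on \(\X_b\) --- which is plausible but is precisely the content that needs proving; alternatively, one avoids normality altogether, as the cited proof does, by extracting the \(j^n\)-coefficient of \(\DF(\X,jL+\H)\) directly from the Hilbert and weight polynomials over the open subset of \(B\) where \(\X\to B\times\C\) restricts to a flat family of test configurations, using constancy of the fibrewise polynomials there. A secondary point you should also justify rather than assert: that the normalisation of \(\X\) restricts over general \(b\) to the normalisation of \(\X_b\), and that this general fibre is normal (so that Wang--Odaka applies fibrewise); both hold in characteristic zero for general \(b\), but normalisation does not commute with restriction in general, so this deserves an argument.
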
 As we have assumed the general fibre of \(X\to B\) is K-polystable, it is automatic that \(\DF(\X_b,\H_b)\geq0\) for general \(b\). Hence \(W_0(\X,\H)\geq0\) for any fibration degeneration. Note that if \(W_0(\X,\H)=0\) then the general fibre \(\X_b\) of \(\X\) normalises to a product test configuration for \(X_b\) by definition of K-polystability.

For \(k\gg0\), we define \[\mu_k(X,H):=\frac{-(m+n)K_X\cdot(kL+H)^{m+n-1}}{(kL+H)^{m+n}}.\] There is then an expansion \begin{equation}\label{eq:slope_expansion}
\mu_k(X,H)=A_0(X,H)+A_1(X,H)k^{-1}+\O(k^{-2}),
\end{equation} where \begin{align*}
A_0(X,H) &= \frac{-mK_X\cdot L^n\cdot H^{m-1}}{L^n\cdot H^m},\\
A_1(X,H) &= \frac{(-nK_X\cdot L^{n-1}\cdot H^m)(L^n\cdot H^m)-\frac{m}{m+1}(-K_X\cdot L^n\cdot H^{m-1})(nL^{n-1}\cdot H^{m+1})}{(L^n\cdot H^m)^2}.
\end{align*} 

As for test configurations, we may \(\C^*\)-equivariantly trivialise \((\X,\H)|_{\C^*}\cong(X\times\C^*,p^*H)\), and compactify trivially over \(\P^1\) to obtain \((\overline{\X},\overline{\H})\).

\begin{proposition}[{\cite[pp.~16--17]{DS_moduli}}]\label{prop:W_1_int_theory}
	Let \((\X,\H)\) be a normal fibration degeneration for \((X,H)\), compactified trivially over \(\P^1\). Then \[W_0(\X,\H)=\binom{m+n}{n}\left( \frac{1}{m+1}A_0(X,H)(L^n\cdot\overline{\H}^{\,m+1})+(L^n\cdot\overline{\H}^{\,m}\cdot K_{\overline{\X}/B\times\P^1}) \right)\] and \[W_1(\X,\H)=\binom{m+n}{n-1}(C_1(\X,\H)+C_2(\X,\H)+C_3(\X,\H)),\] where \begin{align*}
	C_1(\X,\H) &= \frac{1}{m+2}A_0(X,H) (L^{n-1}\cdot\overline{\H}^{\,m+2}), \\
	C_2(\X,\H) &= \frac{1}{n}A_1(X,H)(L^n\cdot\overline{\H}^{\,m+1}), \\
	C_3(\X,\H) &= (L^{n-1}\cdot\overline{\H}^{\,m+1}\cdot K_{\overline{\X}/\P^1}).
	\end{align*}
\end{proposition}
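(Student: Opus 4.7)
The plan is to apply the Wang--Odaka intersection-theoretic formula to the family of test configurations $(\X, jL + \H)$ for $(X, jL + H^r)$ — where $r$ is the exponent of the fibration degeneration — and then expand in powers of $j$ and match coefficients against the expansion \eqref{eq:DF_expansion}. For $j \gg 0$ the bundle $jL + H^r$ is ample on $X$ (since $L$ is ample on $B$ and $H$ is relatively ample), and $jL + \H$ is relatively ample on $\overline{\X} \to \P^1$, so by the normality hypothesis the Wang--Odaka formula applies and gives
\begin{equation*}
\DF(\X, jL+\H) = \tfrac{m+n}{m+n+1}\, \mu(X, jL+H)\, (j\overline{L} + \overline{\H})^{m+n+1} + (j\overline{L} + \overline{\H})^{m+n} \cdot K_{\overline{\X}/\P^1}.
\end{equation*}

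Next one expands each factor in $j$. Because $L$ is pulled back from the $n$-dimensional base $B$, one has $L^{n+1} = 0$ on $\overline{\X}$, so the binomial expansion of $(j\overline{L}+\overline{\H})^{m+n+1}$ truncates and contributes, at orders $j^n$ and $j^{n-1}$, only the terms $\binom{m+n+1}{m+1}L^n \cdot \overline{\H}^{m+1}$ and $\binom{m+n+1}{m+2}L^{n-1} \cdot \overline{\H}^{m+2}$; analogously for $(j\overline{L}+\overline{\H})^{m+n}\cdot K_{\overline{\X}/\P^1}$. The slope factor expands via \eqref{eq:slope_expansion} with $k$ replaced by $j$ as $\mu(X,jL+H) = \tfrac{m}{m+n}A_0(X,H) + \tfrac{n}{m+n}A_1(X,H)\, j^{-1} + O(j^{-2})$.

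Collecting the coefficient of $j^n$ and applying the combinatorial identity $\tfrac{m}{m+n+1}\binom{m+n+1}{m+1} = \tfrac{m}{m+1}\binom{m+n}{n}$ recovers the stated formula for $W_0(\X,\H)$, once one observes that $L^n \cdot \overline{\H}^m \cdot p^*K_B = 0$ for $p:\overline{\X}\to B$ (the class $L^n \cdot p^*K_B$ is pulled back from $B$ along a cycle of codimension exceeding $\dim B$), which permits replacing $K_{\overline{\X}/\P^1}$ by $K_{\overline{\X}/B\times\P^1}$ in that term. The coefficient of $j^{n-1}$ decomposes naturally into three contributions: the subleading term of $(j\overline{L}+\overline{\H})^{m+n+1}$ paired with the leading term of $\mu$ (which becomes $C_1$); the leading term of $(j\overline{L}+\overline{\H})^{m+n+1}$ paired with the subleading term of $\mu$ ($C_2$); and the subleading term of $(j\overline{L}+\overline{\H})^{m+n}\cdot K_{\overline{\X}/\P^1}$ ($C_3$). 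The identities $\tfrac{m}{m+n+1}\binom{m+n+1}{m+2} = \tfrac{m}{m+2}\binom{m+n}{n-1}$, $\tfrac{n}{m+n+1}\binom{m+n+1}{m+1} = \binom{m+n}{n-1}$ and $\binom{m+n}{m+1}=\binom{m+n}{n-1}$ pull out the common prefactor $\binom{m+n}{n-1}$ matching the stated formula for $W_1(\X,\H)$.

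The main obstacle is not conceptual but combinatorial: ensuring that the binomial coefficients and slope coefficients collapse precisely as dictated by the definitions of $W_0$, $W_1$, $A_0$, $A_1$. The auxiliary verifications — that $(\X, jL+\H)$ is a bona fide normal test configuration for $j \gg 0$ and that Wang--Odaka therefore applies — are straightforward given the hypothesis that the fibration degeneration is normal together with standard ampleness results.
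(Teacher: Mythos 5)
Your proposal is correct and is essentially the same derivation as the one the paper relies on: the proposition is quoted directly from \cite[pp.~16--17]{DS_moduli}, where \(W_0\) and \(W_1\) are obtained precisely by applying the Wang--Odaka formula to the test configurations \((\X,jL+\H)\) and expanding in powers of \(j\), using \(L^{n+1}=0\) to truncate the binomial expansions and matching against \eqref{eq:DF_expansion}. Your combinatorial identities all verify (e.g.\ \(\tfrac{m}{m+n+1}\binom{m+n+1}{m+1}=\tfrac{m}{m+1}\binom{m+n}{n}\)), as does the key observation that \(L^n\cdot\overline{\H}^{\,m}\cdot p^*K_B=0\) since \(L^n\cdot K_B\) lives in degree \(n+1\) on the \(n\)-dimensional base, which justifies replacing \(K_{\overline{\X}/\P^1}\) by \(K_{\overline{\X}/B\times\P^1}\) in the \(W_0\) term while correctly leaving \(K_{\overline{\X}/\P^1}\) in \(C_3\).
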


Conjecturally, when \(X\to B\) is a holomorphic surjective submersion the existence of an optimal symplectic connection in \(c_1(H)\) is equivalent to polystability of \(\pi:(X,H)\to(B,L)\). A partial result in this direction has been obtained by Dervan and Sektnan: if \(c_1(H)\) admits an optimal symplectic connection then the fibration \((X,H)\to(B,L)\) is semistable \cite[Theorem 1.1]{DS_moduli}.

\section{The space of relatively cscK metrics}\label{sec:geodesic_equation}

In this section we derive the geodesic equation for relatively K{\"a}hler metrics, and prove the existence of a unique smooth geodesic joining any two relatively cscK metrics.  We finish by discussing the structure of the space of relatively cscK metrics as an infinite dimensional symmetric space of non-positive curvature.

\subsection{The geodesic equation}

Fix a relatively K{\"a}hler metric \(\omega_X\) on \(\pi:X\to B\), not assumed to be relatively cscK. Denote by \(\K\) the set of smooth functions \(\varphi\in C^\infty(X)\) such that \(\omega_X+i\d\db\varphi\) is also relatively K{\"a}hler. Recall that if \(\omega_X\) is relatively cscK, then \(\K_E\) denotes the space of smooth functions \(\varphi:X\to\R\) such that \(\omega_X+i\d\db\varphi\) is also relatively cscK, and we have \(\K_E\subset\K\). 

Let \(\varphi:[0,1]\to\mathcal{K}\) be a smooth path. We define the \emph{energy} of \(\varphi\) to be \[\mathcal{Q}(\varphi):=\int_0^1\int_X\dot{\varphi}_t^2\,\omega_t^m\wedge\omega_B^n\,dt,\] where \(\omega_t:=\omega_X+i\d\db\varphi_t\). The path \(\varphi\) is called a \emph{geodesic} if it is a critical point for the energy functional \(\mathcal{Q}\).

\begin{proposition}
	A smooth path \(\varphi:[0,1]\to\mathcal{K}\) is a geodesic if and only if it satisfies the equation \[\ddot{\varphi}_t=|\d_{\mathcal{V}}\dot{\varphi}_t|^2_{\mathcal{V},t}\] for all \(t\in[0,1]\).
\end{proposition}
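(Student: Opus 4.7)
The plan is a standard variational argument, adapted to the relative setting. The only new feature compared with the classical Mabuchi--Semmes--Donaldson derivation \cite[Proposition 4.25]{Sze14} is that the factor \(\omega_B^n\) in the volume form projects all \((1,1)\)-forms that appear onto their purely vertical components, which is what replaces \(|\d\dot\varphi|^2\) by \(|\d_\V\dot\varphi|^2_\V\) in the geodesic equation.

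Concretely, I would take a smooth variation \(\varphi_{t,s}\) of \(\varphi_t\) with endpoints \(\varphi_0, \varphi_1\) fixed, so that \(\varphi' := \d_s\varphi_{t,s}|_{s=0}\) vanishes at \(t=0, 1\). Differentiating under the integral sign,
\[
\frac{d\mu}{ds}\bigg|_{s=0} = \int_0^1\!\int_X \bigl[\,2\dot\varphi_t \,\d_t\varphi'_t\, \omega_t^m + m\,\dot\varphi_t^2 \cdot i\d\db\varphi'_t \wedge \omega_t^{m-1}\,\bigr] \wedge \omega_B^n\,dt.
\]
Since \(\omega_B^n\) is horizontal of top horizontal bi-degree \((n,n)\), a bi-degree count shows that only the purely vertical parts of \(i\d\db\varphi'_t\) and of \(\omega_t^{m-1}\) contribute to the wedge with \(\omega_B^n\); hence \(m\,i\d\db\varphi'_t\wedge\omega_t^{m-1}\wedge\omega_B^n = (\Delta_{\V,t}\varphi'_t)\,\omega_t^m\wedge\omega_B^n\), where \(\Delta_{\V,t}\) is the fiberwise Laplacian of \(\omega_t\).

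Next, I would perform two integrations by parts. First, because each fiber \(X_b\) is closed K\"ahler and \(\omega_B^n\) factors out via Fubini, fiberwise self-adjointness of \(\Delta_{\V,t}\) combined with the product rule \(\Delta_{\V,t}(\dot\varphi_t^2) = 2\dot\varphi_t\Delta_{\V,t}\dot\varphi_t + 2|\d_\V\dot\varphi_t|^2_{\V,t}\) turns the second integrand into \(\varphi'_t\bigl(2\dot\varphi_t\Delta_{\V,t}\dot\varphi_t + 2|\d_\V\dot\varphi_t|^2_{\V,t}\bigr)\). Second, integrating the first integrand by parts in \(t\) (the boundary term vanishes since \(\varphi'_0 = \varphi'_1 = 0\)) and applying the same vertical reduction to \(\d_t\omega_t^m = m\,i\d\db\dot\varphi_t\wedge\omega_t^{m-1}\) gives \(-2\varphi'_t(\ddot\varphi_t + \dot\varphi_t\Delta_{\V,t}\dot\varphi_t)\). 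Adding the two contributions, the cross terms \(\dot\varphi_t\Delta_{\V,t}\dot\varphi_t\) cancel, leaving
\[
\frac{d\mu}{ds}\bigg|_{s=0} = 2\int_0^1\!\int_X \varphi'_t\bigl(|\d_\V\dot\varphi_t|^2_{\V,t} - \ddot\varphi_t\bigr)\,\omega_t^m\wedge\omega_B^n\,dt,
\]
and a standard density argument yields the Euler--Lagrange equation \(\ddot\varphi_t = |\d_\V\dot\varphi_t|^2_{\V,t}\).

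There is no substantive obstacle; the proof is a direct adaptation of the classical calculation. The one point that requires care is the bi-degree bookkeeping that reduces all of the \((1,1)\)-forms to their purely vertical parts upon wedging with \(\omega_B^n\), together with the observation that fiberwise self-adjointness of \(\Delta_{\V,t}\) is enough for the integration-by-parts step, since \(\omega_B^n\) factors out of the fiberwise integrations by Fubini.
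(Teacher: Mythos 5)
Your proposal is correct and takes essentially the same route as the paper's own proof: the same variation of the energy functional, the same reduction \(m\,i\d\db\varphi'_t\wedge\omega_t^{m-1}\wedge\omega_B^n=(\Delta_{\V,t}\varphi'_t)\,\omega_t^m\wedge\omega_B^n\), the same two integrations by parts (fibrewise self-adjointness of \(\Delta_{\V,t}\) plus integration by parts in \(t\)), and the same cancellation via \(\Delta_{\V,t}(\dot\varphi_t^2)=2\dot\varphi_t\Delta_{\V,t}\dot\varphi_t+2|\d_\V\dot\varphi_t|^2_{\V,t}\). The only differences are cosmetic (you expand the product rule earlier, and you spell out the bi-degree bookkeeping that the paper leaves implicit).
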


Here we define \(\d_\V f\) by restricting \(f\) to a fibre \(X_b\) and then taking \(\d (f|_{X_b})\in\Lambda^{1,0}(T^*X_b)\). The hermitian metric \(|\cdot|_{\V,t}\) on \(T^*X_b\) is that determined by \(\omega_{b,t}:=\omega_t|_{X_b}\). Note that the above geodesic equation is simply the usual geodesic equation for K{\"a}hler metrics but fibrewise. That is, \(\varphi\) is a geodesic if and only if \(\varphi|_{X_b}\) is a geodesic in the space of K{\"a}hler potentials for \((X_b,\omega_b)\) for all \(b\in B\).

\begin{proof}
We follow closely the argument in \cite[Proposition 4.25]{Sze14}. Suppose we vary \(\varphi\) by a path \(\psi\) with \(\psi_0=\psi_1=0\). Then writing \(\omega_{\varphi_t+\epsilon\psi_t}:=\omega+i\d\db(\varphi_t+\epsilon\psi_t)\), we have \[\left.\frac{d}{d\epsilon}\right|_{\epsilon=0}\omega_{\varphi_t+\epsilon\psi_t}^m\wedge\omega_B^n=m\,i\d\db\psi_t\wedge\omega_t^{m-1}\wedge \omega_B^n=(\Delta_{\mathcal{V},t}\psi_t)\, \omega_t^m\wedge\omega_B^n,\] where \(\Delta_{\mathcal{V},t}\) is the vertical K{\"a}hler Laplacian with respect to \(\omega_t\). The variation of the energy is then \begin{align*}
\left.\frac{d}{d\epsilon}\right|_{\epsilon=0}\mathcal{Q}(\varphi+\epsilon\psi)=\int_0^1\int_X(2\dot{\varphi}_t\dot{\psi}_t+\dot{\varphi}_t^2(\Delta_{\mathcal{V},t}\psi_t))\,\omega_t^m\wedge\omega_B^n\,dt.
\end{align*} Splitting the integrand into two terms, for the second term we have \begin{align*}
\int_X\dot{\varphi}_t^2(\Delta_{\mathcal{V},t})\,\omega_t^m\wedge\omega_B^m &= \int_B\left(\int_{X/B}\dot{\varphi}_t^2(\Delta_{\mathcal{V},t}\psi_t)\,\omega_t^m\right)\omega_B^n \\
&= \int_B\left(\int_{X/B}\Delta_{\mathcal{V},t}(\dot{\varphi}_t^2)\psi_t\,\omega_t^m\right)\omega_B^n \\
&= \int_X \psi_t\Delta_{\mathcal{V},t}(\dot{\varphi}_t^2)\,\omega_t^m\wedge\omega_B^n.
\end{align*} For the first term, consider \begin{align*}
\frac{d}{dt}(\dot{\varphi}_t\psi_t\,\omega_t^m\wedge\omega_B^n)
&= (\ddot{\varphi}_t\psi_t+\dot{\varphi}_t\dot{\psi}_t+\dot{\varphi}_t{\psi}_t(\Delta_{\mathcal{V},t}\dot{\varphi}_t))\,\omega_t^m\wedge\omega_B^n.
\end{align*} Integrating this first over \(X\) and then over \(t\in[0,1]\), the left hand side vanishes since \(\psi_0=\psi_1=0\). Hence the first term in the variation of the energy becomes \[\int_0^1\int_X-2\psi_t(\ddot{\varphi}_t+\dot{\varphi}_t(\Delta_{\mathcal{V},t}\dot{\varphi}_t))\,\omega_t^m\wedge\omega_B^n\,dt.\] Putting this all together, the variation is equal to \[\int_0^1\int_X\psi_t[-2\ddot{\varphi}_t-2\dot{\varphi}_t(\Delta_{\mathcal{V},t}\dot{\varphi}_t)+\Delta_{\mathcal{V},t}(\dot{\varphi}_t^2)]\,\omega_t^m\wedge\omega_B^n\,dt.\]
Using the fact that \[\Delta_{\mathcal{V},t}(\dot{\varphi}_t^2)=2\dot{\varphi_t}(\Delta_{\mathcal{V},t}\dot{\varphi}_t)+2\langle\d_{\mathcal{V}}\dot{\varphi}_t\, ,\d_{\mathcal{V}}\dot{\varphi}_t\rangle_{\mathcal{V},t},\] where \(\d_{\mathcal{V}}\) is the vertical \(\d\)-operator and \(\langle\, ,\rangle_{\mathcal{V},t}\) is the vertical inner product given by restriction of \(\omega_t\) to the fibres, the variation is equal to \[\int_0^1\int_X\psi_t(-2\ddot{\varphi}_t+2|\d_{\mathcal{V}}\dot{\varphi}_t|^2_{\mathcal{V},t})\,\omega_t^m\wedge\omega_B^n\,dt.\] Hence the geodesic equation is \[\ddot{\varphi}_t=|\d_{\mathcal{V}}\dot{\varphi}_t|^2_{\mathcal{V},t}.\qedhere\] 
\end{proof}

\begin{remark}
	If \(\omega_X\) is relatively cscK, the above proof shows that a smooth path \(\varphi_t:[0,1]\to\K_E\) is a geodesic if and only if it is a geodesic in the ambient space \(\K\); the same calculation holds if we restrict to variations of paths within \(\K_E\).
\end{remark}

\subsection{Existence of geodesics}

We now work towards proving the following existence result: \begin{theorem}\label{thm:geodesic_existence}
	Let \(\omega_X\) be a relatively cscK metric. For any \(\varphi_0,\varphi_1\in\K_E\), there exists a unique geodesic \(\varphi:[0,1]\to \K_E\) joining \(\varphi_0\) to \(\varphi_1\).
\end{theorem}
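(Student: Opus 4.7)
The plan is to exploit the fact that the geodesic equation
\[
\ddot\varphi_t = |\d_\V\dot\varphi_t|^2_{\V,t}
\]
involves only vertical differentiation: a path $\varphi_t$ in $\K_E$ is a geodesic if and only if, for every $b \in B$, the restriction $\varphi_t|_{X_b}$ is a geodesic in the space of K\"ahler potentials on $(X_b,\omega_{0,b})$. Since the endpoints $\varphi_0,\varphi_1$ restrict to cscK potentials on each fibre, the problem reduces to finding, on each $X_b$, the unique geodesic of cscK potentials joining the two restrictions, and then checking that these fibrewise solutions assemble into a smooth path on $X$.

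First I would solve the problem fibrewise. Setting $\omega_{i,b}:=(\omega_X+i\d\db\varphi_i)|_{X_b}$ for $i=0,1$, Lemma \ref{lem:holomorphy/homog_diffeo} provides a unique real holomorphy potential $u_b\in E_b$ with $\exp(\nabla^{1,0}u_b)^*\omega_{0,b}=\omega_{1,b}$, and Proposition \ref{prop:holomorphy_geodesics} then produces the explicit smooth geodesic
\[
\tilde\varphi_{b,t}:=\int_0^t \exp(s\nabla^{1,0}u_b)^*(u_b)\,ds
\]
joining $\omega_{0,b}$ to $\omega_{1,b}$; the non-positive curvature of the symmetric space $G_b/K_b$ discussed just after Lemma \ref{lem:holomorphy/homog_diffeo} ensures this is the unique fibrewise geodesic at the level of metrics. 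Smooth dependence on $b$ then follows from Proposition \ref{prop:bundle_E}: since $E\to B$ is a smooth vector bundle and the fibrewise identifications $E_b\cong G_b/K_b$ vary smoothly with $b$, the map $b\mapsto u_b$ is a smooth section of $E$, so $\tilde\varphi_t$ is smooth jointly in $(x,t)$ and $\omega_X+i\d\db(\varphi_0+\tilde\varphi_t)$ is a smooth family of relatively cscK metrics interpolating $\omega_0$ and $\omega_1$.

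To match the prescribed boundary values at the level of potentials, observe that $(\varphi_0+\tilde\varphi_1)|_{X_b}$ and $\varphi_1|_{X_b}$ induce the same form on every fibre, so Lemma \ref{lem:base_form} yields $c\in C^\infty(B)$ with $\varphi_1=\varphi_0+\tilde\varphi_1+\pi^*c$. Setting $\varphi_t:=\varphi_0+\tilde\varphi_t+t\,\pi^*c$ then gives a smooth path in $\K_E$ with the correct endpoints; since $\pi^*c$ is annihilated by $\d_\V$ and is affine in $t$, it contributes nothing to the fibrewise geodesic equation, so $\varphi_t$ is a geodesic. For uniqueness, any other geodesic $\psi_t$ restricts on each $X_b$ to a geodesic of K\"ahler potentials with the same boundary values, and fibrewise this geodesic is unique (the underlying path of cscK metrics is unique by Lemma \ref{lem:holomorphy/homog_diffeo}, and the potential representing it is then determined up to an affine function of $t$ which is pinned by the two endpoints); hence $\psi_t=\varphi_t$. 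The main technical hurdle is establishing the smooth dependence of $u_b$ on $b$; this rests on Proposition \ref{prop:bundle_E} together with smoothness of the fibrewise Cartan-type identification, which can be naturally packaged via the bundle $H\to B$ of fibrewise homogeneous spaces flagged earlier in the section.
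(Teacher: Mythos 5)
Your overall strategy coincides with the paper's: reduce to the fibrewise geodesic equation, use Proposition \ref{prop:holomorphy_geodesics} and Lemma \ref{lem:holomorphy/homog_diffeo} to produce the explicit geodesic on each fibre, correct the endpoint at the level of potentials by a base function via Lemma \ref{lem:base_form}, and prove uniqueness from fibrewise uniqueness plus the observation that any residual path in \(C^\infty(B)\) satisfies \(\ddot{\psi}_t=0\) and is then killed by the boundary conditions. All of that matches the paper's construction.

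The genuine gap is the step you yourself flag as ``the main technical hurdle'': smooth dependence of \(u_b\) on \(b\). You assert it follows from Proposition \ref{prop:bundle_E} because \(E\to B\) is a smooth vector bundle and ``the fibrewise identifications \(E_b\cong G_b/K_b\) vary smoothly with \(b\).'' This does not follow. Proposition \ref{prop:bundle_E} only supplies the smooth bundle structure on \(E\); it says nothing about regularity in \(b\) of the solution \(u_b\) of the nonlinear problem \(\exp(\nabla^{1,0}u_b)^*\omega_{0,b}=\omega_{1,b}\). Worse, in the paper the smooth structure on the bundle \(H\) of homogeneous spaces is \emph{defined} by declaring the set-theoretic bijection with \(E\) to be a diffeomorphism, so ``the identifications vary smoothly'' is tautological and cannot serve as input to a smoothness argument. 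What must actually be compared is this smooth structure with the smooth dependence on \(b\) of the target metric \(\omega_X'\) itself, and this comparison occupies the bulk of the paper's proof: first, Lemma \ref{lem:bundle_isom} shows that the normalised fibrewise potentials \(\phi_b\) of a relatively K{\"a}hler metric assemble into a smooth section of \(\K_0\), using the Banach bundles \(\K^k\) of \(C^k\)-regular potentials, the Monge--Amp{\`e}re energy together with the implicit function theorem, and the inverse function theorem giving \(\K^k_0\cong\L^{k-2}\); second, Theorem \ref{prop:universal_property} shows the natural map \(E\to\K_0\) is an injective immersion whose preimages of closed bounded sets are compact --- via the derivative computation of \(u\mapsto\int_0^1\exp(tv_u)^*u\,dt\) at the origin, \(G\)-equivariance, and a maximum-point divergence argument --- so that the set-theoretic lift \(b\mapsto u_b\) of the smooth section \(b\mapsto\phi_b\) is automatically smooth. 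Without an argument of this kind, your construction produces only a family of fibrewise geodesics with no control on regularity in \(b\), so the candidate path \(\varphi_t\) need not be smooth (or even continuous) on \(X\), and the theorem is not proved. A minor secondary point: in your uniqueness parenthetical, the potential representing a fixed fibrewise path of metrics is a priori determined only up to an arbitrary function of \((b,t)\); it is the geodesic equation, which forces \(\ddot{c}=0\) since \(\d_\V c=0\), that makes this ambiguity affine in \(t\) before the boundary conditions can pin it down, and you should make that intermediate step explicit.
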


We will assume \(\varphi_0=0\) for simplicity, and write \(\omega_X':=\omega_X+i\d\db\varphi_1\). For each \(b\in B\), denote by \(E_b\) the space of real holomorphy potentials on \((X_b,\omega_b)\) having zero integral. Recalling Proposition \ref{prop:holomorphy_geodesics}, on any fibre \(X_b\) there exists a unique \(u_b\in E_b\) which relates the metrics \(\omega_b\) and \(\omega_b'\) in the following way. Letting \(v_b:=\nabla^{1,0}_bu_b\) be the holomorphic vector field generated by \(u_b\), and letting \(\exp(tv_b)\) be the time \(t\) flow of the real part of \(v_b\), we have that \begin{equation}\label{eq:dummy2}
\psi_{b,t}:=\int_0^t\exp(sv_b)^*u_b\,ds
\end{equation} is a smooth geodesic from \(0\) to \(\psi_{b,1}=\varphi_1|_{X_b}\) in the space \(\K_b\) of K{\"a}hler potentials for \((X_b,\omega_b)\), and \(\omega_b'=\omega_b+i\d\db\psi_{b,1}\). 

Let us now \emph{assume} that the \(u_b\) vary smoothly with \(b\), so as to define a global smooth function \(u:X\to\R\). Then the \(\psi_{b,t}\) also depend smoothly on \(b\) and \(t\), and so define a global smooth geodesic \(\psi:[0,1]\to\K_E\).  If we let \[\omega_{X,t}:=\omega_X+i\d\db\psi_t\] then we have a smooth path of relatively cscK metrics such that \(\omega_{X,0}=\omega_X\) and \(\omega_{X,1}|_{X_b}=\omega_X'|_{X_b}\) for any \(b\in B\). By Lemma \ref{lem:base_form} there exists a smooth function \(f:B\to\R\) such that \(\omega_{X,1}-\omega_X'=i\d\db\pi^*f\). Since \[\omega_X+i\d\db\psi_1=\omega_X+i\d\db(\varphi_1+f),\] by adding a constant to \(f\) we may assume that \(\varphi_1=\psi_1-f\). Noting that \(\d_{\V}f=0\), the path \(\varphi_t:=\psi_t-tf\) is a smooth geodesic in \(\K_E\) joining \(\varphi_0=0\) to \(\varphi_1\).

Thus, for the existence of smooth geodesics it only remains to show that the holomorphy potentials \(u_b\in E_b\) depend smoothly upon \(b\). To prove this, we will introduce two fibre bundles: a smooth fibre bundle \(H\to B\) whose fibre \(H_b\) is the homogeneous space of cscK metrics on \((X_b,[\omega_b])\), and a fibre bundle \(\K_0\) whose fibre \(\K_{0,b}\subset\K_b\) consists of suitably normalised K{\"a}hler potentials on \((X_b,\omega_b)\). The idea will be to show that \(H\) embeds as a smooth subbundle of \(\K_0\), so that a smooth choice of fibrewise cscK metric will then determine a smooth section of \(H\).

Given the fixed relatively cscK metric \(\omega_X\), for each \(b\in B\) write \(G_b:=\Aut_{\red}(X_b)\) and let \(K_b:=\mathrm{Isom}_0(\omega_b)\cap G_b\) be the maximal compact subgroup of isometries of \(\omega_b\) in \(G_b\). By the proof of Proposition \ref{prop:holomorphy_geodesics}, we have a diffeomorphism \(E_b\cong G_b/K_b\), and the homogeneous space \(G_b/K_b\) parametrises the cscK metrics on \(X_b\) in the class \([\omega_b]\). Letting \(H\) be the disjoint union of the \(H_b\) over \(b\in B\), we have a set-theoretic bijection \(H\cong E\). Proposition \ref{prop:bundle_E} tells us that \(E\) is a smooth vector bundle, so we may endow \(H\) with a smooth structure by declaring the set-theoretic bijection between \(H\) and \(E\) to be a diffeomorphism.

Let us now consider the non-relative case, so we have a compact K{\"a}hler manifold \((Y,\omega)\) of dimension \(d\), with space of K{\"a}hler potentials \(\K\). For any \(\phi\in\K\), there is a splitting of the tangent space \[T_\phi\K=\left\lbrace\psi\in C^\infty(Y):\int_Y\psi\,\omega_\phi^d=0\right\rbrace\oplus\R,\] where \(\R\) denotes the constant functions. This splitting integrates to a decomposition \[\K\cong\K_0\times\R,\] see \cite[Section 2.4]{Che00}. In fact, \(\K_0=I^{-1}(0)\), where \(I:\K\to\R\) is the \emph{Monge--Amp{\`e}re energy} (also called the \emph{Aubin--Mabuchi functional}): \[I(\phi):=\frac{1}{d+1}\sum_{j=0}^d\int_Y\phi\,\omega_\phi^j\wedge\omega^{d-j}.\] Since \(I(\phi+c)=I(\phi)+c\), for any \(\omega'\in[\omega]\) there exists a unique \(\phi\in\K_0\) such that \(\omega'=\omega_\phi\).

Now, consider the above splitting for the fibre \((X_b,\omega_b)\): let \(\K_b\) to be the space of K{\"a}hler potentials for \((X_b,\omega_b)\) with decomposition \(\K_b\cong\K_{0,b}\times\R\). This decomposition depends smoothly upon \(b\) since the functionals \(I_b:\K_b\to\R\) do, and so there is a smooth Fr{\'e}chet bundle \(\K_0\to B\) whose fibre over \(b\) is \(\K_{0,b}\). Notice by \eqref{eq:integral_zero} of Proposition \ref{prop:holomorphy_geodesics} that the path of K{\"a}hler potentials \(\psi_{b,t}\) defined in \eqref{eq:dummy2} lies within \(\K_{0,b}\). The following Lemma will tell us that \(\psi_{b,1}\) is smooth in \(b\).

 \begin{lemma}\label{lem:bundle_isom}
	Let \(\omega_X'\) be a relatively K{\"a}hler metric, and for each \(b\in B\) denote by \(\phi_b\) the unique K{\"a}hler potential in \(\K_{0,b}\) such that \(\omega_b'=\omega_b+i\d\db\phi_b\). Then the \(\phi_b\) depend smoothly upon \(b\), so piece together to a global smooth section \(\phi:B\to\K_0\).
\end{lemma}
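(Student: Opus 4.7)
The plan is to produce the potentials $\phi_b$ in two stages. First, I would construct a smooth function $\tilde\phi:X\to\R$ whose fibrewise restrictions give K\"ahler potentials for $\omega_b'$ relative to $\omega_b$ under the convenient normalisation $\int_{X_b}\tilde\phi\,\omega_b^m=0$. Second, I would adjust by a function pulled back from $B$ to achieve the normalisation $I_b(\phi_b)=0$ characterising $\mathcal{K}_{0,b}$.

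For the first stage, note that on each fibre $\omega_b'-\omega_b$ is a $d$-exact and in fact $\partial_b\bar\partial_b$-exact real $(1,1)$-form, so the smooth function $f_b:=m\,\Lambda_{\omega_b}(\omega_b'-\omega_b)$ has vanishing integral against $\omega_b^m$. One can then uniquely solve the fibrewise elliptic equation
\[
\Delta_{\omega_b}\tilde\phi_b \;=\; f_b, \qquad \int_{X_b}\tilde\phi_b\,\omega_b^m \;=\; 0.
\]
The fibrewise $\partial\bar\partial$-lemma produces some a priori potential $\psi_b$ solving the same Laplace equation up to an additive constant, so $\tilde\phi_b$ differs from $\psi_b$ by a constant and hence also satisfies $\omega_b'=\omega_b+i\partial_b\bar\partial_b\tilde\phi_b$.

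For the second stage, the map $c\mapsto I_b(\tilde\phi_b+c)$ is affine with positive slope $V_b:=\int_{X_b}\omega_b^m$, so there is a unique $c_b\in\R$ with $I_b(\tilde\phi_b-c_b)=0$. Setting $\phi_b:=\tilde\phi_b-c_b$ places $\phi_b$ in $\mathcal{K}_{0,b}$ without disturbing the identity $\omega_b'=\omega_b+i\partial_b\bar\partial_b\phi_b$. Smoothness of $c_b$ in $b$ is automatic from smoothness of $\tilde\phi$ on $X$ by differentiation under the integral sign, so $\phi:=\tilde\phi-\pi^*c$ is a smooth function on $X$ delivering the required section $B\to\mathcal{K}_0$.

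The main obstacle is showing joint smoothness of $\tilde\phi$ on the total space $X$. This is a parametric elliptic regularity statement: locally on $B$, Ehresmann's theorem identifies $\pi^{-1}(U)\cong U\times F$ smoothly, so the fibrewise Laplacians $\Delta_{\omega_b}$ form a smooth family of elliptic operators on the fixed compact manifold $F$ with a smoothly varying source term. Smooth dependence of the Green's operator on parameters — equivalently, an implicit function theorem argument in the Fr\'echet space of mean-zero smooth functions on $F$ — then yields the required joint smoothness. This is a well-established fact about smooth families of elliptic PDE, and once it is in hand, the two-stage construction above delivers the section $\phi$.
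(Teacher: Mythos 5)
Your argument is correct, but it takes a genuinely different route from the paper's. The paper proceeds softly via Banach fibre bundles: the fibrewise energies \(I_b\) assemble into a smooth map \(I:\K^k\to B\times\R\) with everywhere surjective derivative, so the implicit function theorem exhibits \(\K^k_0=I^{-1}(0)\) as a smooth Banach subbundle; the map \(\psi_b\mapsto\omega_b+i\d\db\psi_b\) then has invertible fibrewise derivative, and the inverse function theorem yields a bundle isomorphism \(\K^k_0\cong\L^{k-2}\) with the bundle of \(C^{k-2}\)-regular K{\"a}hler metrics, so the smooth section of metrics determined by \(\omega_X'\) corresponds to a smooth section of potentials for every \(k\). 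You instead exploit the fact that the potential-to-metric correspondence is \emph{affine}, so no nonlinear IFT is needed: the normalised potential is the unique mean-zero solution of the fibrewise Poisson equation \(\Delta_{\omega_b}\tilde\phi_b=\Lambda_{\omega_b}(\omega_b'-\omega_b)\) (note that the paper's convention builds the dimensional factor into \(\Lambda_{\omega_B}\), so your extra factor of \(m\) is a convention choice; your identification of \(\tilde\phi_b\) with the \(\d\db\)-lemma potential up to a fibrewise constant pins the normalisation down correctly, using connectedness of the fibres), followed by the affine adjustment \(c_b=I_b(\tilde\phi_b)/V_b\) placing the result in \(\K_{0,b}\). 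Joint smoothness then reduces to smooth parameter dependence of the Green's operator for the smooth family \(\Delta_{\omega_b}\), which is legitimate precisely because the kernels are the constants on each fibre, hence of constant dimension; this is the same elliptic input that makes the paper's fibrewise derivative invertible, so the two proofs are cousins, with the paper packaging the analysis into the Banach IFT and you solving the linearised problem explicitly. The trade-off: your approach is more elementary and self-contained apart from the (standard, citable) parametric elliptic regularity, but it delivers only the section asserted in the lemma, whereas the paper's method yields along the way the bundle isomorphism \(\K_0\cong\L\), which is reused immediately afterwards in the commutative diagram relating \(H\), \(E\), \(\L\) and \(\K_0\).
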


\begin{proof}
	Write \(\K^k\) for the Banach fibre bundle on \(B\) whose fibre \(\K_b^k\) consists of \(C^k\)-regular K{\"a}hler potentials for \((X_b,\omega_b)\). The fibrewise functionals \(I_b:\K_b^k\to \R\) piece together to define a global smooth map \(I:\K^k\to B\times\R\) whose derivative is everywhere surjective. By the implicit function theorem for Banach manifolds, the inverse image \(I^{-1}(0)\) is a smooth Banach subbundle \(\K^k_0\subset\K^k\), consisting of the normalised \(C^k\)-regular K{\"a}hler potentials on each fibre.
	
	Similarly, denote by \(\L^{k-2}\) the Banach fibre bundle on \(B\) whose fibre \(\L^{k-2}_b\) is the space of \(C^{k-2}\)-regular K{\"a}hler metrics in the class \([\omega_b]\). There is a smooth morphism of Banach fibre bundles \(\K^k_0\to\L^{k-2}\) given over \(b\in B\) by \[\psi_b\mapsto \omega_b+i\d\db\psi_b.\] The derivative of this morphism on any fibre is an isomorphism, hence by the inverse function theorem for Banach manifolds, we have an isomorphism \(\K^k_0\cong\L^{k-2}\).
	
	For any \(k\), the relatively cscK metric \(\omega_X'\) determines a smooth section of \(\L^{k-2}\). It follows that the corresponding section \(\phi\) of normalised K{\"a}hler potentials is a smooth section of \(\K^k_0\) for any \(k\). Hence the \(\phi_b\) depend smoothly upon \(b\).
\end{proof}

 Letting \(\L\) denote the fibre bundle whose fibre \(\L_b\) is the space of K{\"a}hler metrics in \([\omega_b]\) as in the above proof, we have the following commutative diagram of morphisms of fibre bundles over \(B\): \begin{equation}\label{eq:diagram}
 \begin{tikzcd}
   	H \arrow[r,"\sim"]\arrow[d]&\arrow[l]\arrow[d] E \\
   	\L\arrow[r,"\sim"] & \K_0 \arrow[l]
   	\end{tikzcd}.
 \end{equation} The right-most vertical arrow \(E\to\K_0\) is simply defined by composing the other three arrows. The smooth map of fibre bundles \(H\to\L\) is given by \[(b,K_bg)\mapsto (b,g^*\omega_b).\] There are right \(G_b\)-actions on \(H_b\) and \(\L_b\), and the morphism \(H_b\to\L_b\) is \(G_b\)-equivariant by construction. We can therefore consider the associated morphism \(E_b\to\K_{0,b}\) as \(G_b\)-equivariant as well.
 
 \begin{theorem}\label{prop:universal_property}
 	The map of fibre bundles \(E\to\mathcal{K}_0\) in the diagram \eqref{eq:diagram} satisfies the following universal property: given any relatively cscK metric \(\omega_X'\) on \(X\) with associated smooth section \(B\to \mathcal{K}_0\), there exists a unique smooth section \(u:B\to E\) such that the following diagram commutes \begin{center}
 		\begin{tikzcd}[column sep = small]
 		E \arrow[rr]&& \mathcal{K}_0 \\
 		& B\arrow[ul,dashed]\arrow[ur] &
 		\end{tikzcd}
 	\end{center} The section \(u:B\to E\) is such that \(u_b\in E_b\) is the unique real holomorphy potential relating \(\omega_b\) and \(\omega_b'\) as in Proposition \ref{prop:holomorphy_geodesics}.
 \end{theorem}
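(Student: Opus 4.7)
The plan is to construct $u: B \to E$ pointwise, verify fibrewise commutativity and uniqueness, and then establish smoothness via a Banach-manifold argument on each level $\K_0^k$ of the scale introduced in the proof of Lemma \ref{lem:bundle_isom}.

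For the pointwise construction, fix $b \in B$. Combining Lemma \ref{lem:holomorphy/homog_diffeo} with Proposition \ref{prop:holomorphy_geodesics} applied to the fibre $(X_b,\omega_b)$, there is a unique $u_b \in E_b$ such that $\omega_b' = \exp(\nabla^{1,0}u_b)^*\omega_b$. By construction the image of $u_b$ under the fibrewise composite $E_b \to \K_{0,b}$ is the normalised K\"ahler potential $\phi_b$ satisfying $\omega_{\phi_b} = \omega_b'$, which by Lemma \ref{lem:bundle_isom} coincides with the value at $b$ of the smooth section $B \to \K_0$ in the diagram. Hence the diagram commutes pointwise, and fibrewise uniqueness of $u_b$ is immediate from the diffeomorphism in Lemma \ref{lem:holomorphy/homog_diffeo}.

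The main task is to show that $b \mapsto u_b$ is smooth. The strategy is to prove that the morphism of fibre bundles $F: E \to \K_0$ is a smooth embedding, so that the smooth section $\phi: B \to \K_0$ lifts smoothly along $F$. Since $E$ is a finite-dimensional bundle while $\K_0$ is Fr\'echet, the argument is carried out at each Banach level $\K_0^k$ for $k$ large. Fix $b_0 \in B$. The fibrewise map $F_{b_0}: E_{b_0} \to \K_{0,b_0}^k$ factors through the injective chain $E_{b_0} \cong G_{b_0}/K_{b_0} \to \L_{b_0}^{k-2} \cong \K_{0,b_0}^k$, and is therefore injective. A direct variation calculation, taking $u = \epsilon w$ and expanding $\int_0^1 \exp(s\nabla^{1,0}u)^*u\,ds$ to leading order in $\epsilon$, shows that $DF_{b_0}(0)$ is the natural inclusion $E_{b_0} \hookrightarrow T_0\K_{0,b_0}^k$, hence injective. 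By $G_{b_0}$-equivariance of $F_{b_0}$ with respect to the pullback action of $G_{b_0}$ on $\K_{0,b_0}^k$, the derivative remains injective at every $u \in E_{b_0}$, so $F_{b_0}$ is an injective immersion.

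Choosing a closed topological complement to the finite-dimensional image of $DF_{b_0}(u_{b_0})$ inside $T_{\phi_{b_0}}\K_{0,b_0}^k$, the inverse function theorem for Banach manifolds produces an open neighbourhood $V$ of $\phi_{b_0}$ in $\K_0^k$ together with a smooth retract $r: V \to E$ such that $r \circ F = \Id$ near $u_{b_0}$. Since the section $\phi: B \to \K_0^k$ from Lemma \ref{lem:bundle_isom} is smooth and takes values in $F(E)$, the lift $u = r \circ \phi$ is smooth in a neighbourhood of $b_0$. As $b_0 \in B$ is arbitrary and $k$ may be taken arbitrarily large, $u: B \to E$ is smooth as a section of the Fr\'echet bundle. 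The main technical obstacle I anticipate is ensuring that the Banach inverse function theorem applies cleanly across the base; this is where the finite-dimensionality of each fibre $E_b$ is essential, since it guarantees that the image of $DF_b(u_b)$ admits a closed complement and that the retract $r$ is genuinely smooth rather than merely continuous.
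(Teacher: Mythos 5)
Your pointwise construction, fibrewise uniqueness, and the injective-immersion step all coincide with the paper's proof: the paper computes the same derivative \(\left.\frac{d}{ds}\right|_{s=0}\int_0^1\exp(stv_\varphi)^*(s\varphi)\,dt=\varphi\) at the origin and upgrades to all of \(E_b\) via \(G_b\)-equivariance and transitivity, exactly as you do. The gap is in your final step: an injective immersion need not be an embedding, and your local-retract argument silently assumes that it is. The inverse function theorem gives you a neighbourhood \(W\) of \(u_{b_0}\) in \(E\) and a neighbourhood \(V\) of \(\phi_{b_0}\) in \(\K_0^k\) with \(r\circ F=\Id\) on \(W\); but for \(b\) near \(b_0\) you only know \(\phi_b\in V\cap F(E)\), not \(\phi_b\in F(W)\). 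Since the fibres \(E_b\cong G_b/K_b\) are noncompact (e.g.\ \(\PGL(m+1,\C)/\mathrm{PU}(m+1)\) for projective-space fibres), it is a priori possible that \(\phi_b=F(u_b)\) with \(u_b\) far from \(u_{b_0}\) while \(\phi_b\) stays close to \(\phi_{b_0}\) --- the standard pathology of injective immersions with noncompact domain. In that case \(r(\phi_b)\neq u_b\), and indeed \(F(r(\phi_b))\neq\phi_b\), so \(r\circ\phi\) is smooth but is not a lift of \(\phi\) at all. What you are implicitly assuming is continuity of the set-theoretic section \(b\mapsto u_b\), which is precisely the hard part of the statement.

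The paper closes exactly this gap with a properness statement: the preimage under \(E\to\K_0\) of any closed bounded set (with respect to any \(C^k\)-norm) is compact in \(E\). Its proof takes a maximum point \(x\in X_b\) of \(\varphi\in E_b\setminus\{0\}\); the flow of \(v_\varphi=\nabla^{1,0}\varphi\) fixes \(x\), so the potential \(\int_0^t\exp(sv_\varphi)^*\varphi\,ds\) evaluates at \(x\) to \(t\varphi(x)\to\infty\), and a compactness argument on the unit sphere bundle of \(E\) makes this divergence uniform on compact subsets, so points escaping to infinity in \(E\) leave every bounded set in \(\K_0\). Properness together with your injective immersion makes \(E\to\K_0\) a topological embedding onto its image; then \(b\mapsto u_b\) is continuous (the \(F\)-preimage of a small closed ball around \(\phi_{b_0}\) traps \(u_b\) near \(u_{b_0}\)), and your retract argument correctly upgrades continuity to smoothness. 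So your proof is repaired by inserting such a properness or divergence estimate before invoking the retract; without it, the assertion that \(u=r\circ\phi\) is the desired lift does not follow.
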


 \begin{proof}
 	Existence and uniqueness of the map as a map of sets is clear. The only question is smoothness. For this, we will show that \(E\to\mathcal{K}_0\) is an injective immersion, and that the preimage of any closed bounded set in \(\K_0\) (with respect to any \(C^k\)-norm) is compact in \(E\). Then smoothness will follow immediately from the commuting diagram.

 	Note that injectivity of \(E\to\mathcal{K}_0\) follows from injectivity of \(H\to\mathcal{L}\). To see that \(E\to\mathcal{K}_0\) is an immersion, it suffices to show that for each \(b\in B\) the map \(E_b\to\mathcal{K}_{0,b}\) is an immersion. We thus drop the subscript \(b\) and work on a single fixed K{\"a}hler manifold \((X,\omega)\) of constant scalar curvature. Given \(u\in E\), let \(v_u:=\nabla^{1,0}u\) be the associated holomorphic vector field. The map \(E\to\mathcal{K}_0\) is then given by \[u\mapsto\int_0^1\exp(tv_u)^*u\,dt.\] We will show that the derivative of this map is injective at the origin. By \(G\)-equivariance of the map \(E\to \K_0\) and transitivity of the \(G\)-action on \(E\cong G/K\), this will imply the derivative is everywhere injective. We compute \[\left.\frac{d}{ds}\right|_{s=0}\int_0^1\exp(stv_u)^*(su)\,dt=\int_0^1u\,dt=u.\] Hence the derivative is injective at the origin.
 	
 	To show the inverse image of any closed bounded set is compact, we take a maximum \(x\in X_b\) of \(u\in E_b\backslash\{0\}\). Then \(\nabla^{1,0}_bu(x)=0\), and the flow \(\exp(v_u)\) fixes \(x\). Since \(u(x)>0\), \[\int_0^t\exp(sv_u)^*u(x)\,ds=\int_0^tu(x)\,ds=tu(x)\to\infty\] as \(t\to\infty\). A compactness argument on the unit sphere bundle of \(E\) shows this divergence is uniform over compact subsets, and so the preimage of a closed bounded set must be compact.
 \end{proof}

 \begin{proof}[Proof of Theorem \ref{thm:geodesic_existence}]
 	We have shown that a geodesic exists, since the previous theorem shows the \(u_b\) depend smoothly upon \(b\). For uniqueness, note that we already have fibrewise uniqueness from the usual theory of K{\"a}hler geodesics. Thus, two geodesics could only potentially differ by a smooth path \(\psi_t\) in \(C^\infty(B)\). Since \(\d_{\V}\dot{\psi}_t=0\), by the geodesic equation this path must then satisfy \(\ddot{\psi}_t=0\). Hence \(\psi_t=tf+g\) for some smooth functions \(f,g:B\to\R\). The boundary conditions then imply \(f=g=0\).
 \end{proof}
 
 \subsection{Negative curvature of the space \(\mathcal{K}_E\)}

We will end this section with a discussion of the structure of the space \(\K_E\). First, the following decomposition is clear from the above: \begin{proposition}
	There is a Riemannian decomposition \[\K_E\cong C^\infty(H)\times C^\infty(B),\] where \(H\to B\) is the fibre bundle whose fibre is \(G_b/K_b\) and \(C^\infty(H)\) denotes the smooth sections of \(H\). This decomposition integrates the infinitesimal decomposition of Proposition \ref{prop:K_E_tangent_decomp}.
\end{proposition}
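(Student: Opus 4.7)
The plan is to construct a diffeomorphism $\Phi:\K_E\to C^\infty(H)\times C^\infty(B)$ and then verify that it identifies the $L^2$ Riemannian structure on $\K_E$ with a product metric, whose tangent-level splitting agrees with Proposition \ref{prop:K_E_tangent_decomp}. For the construction, given $\varphi\in\K_E$ the metric $\omega_\varphi:=\omega_X+i\d\db\varphi$ is relatively cscK, so for each $b\in B$ its restriction $\omega_\varphi|_{X_b}$ determines a point of $H_b=G_b/K_b$ and so a section $s_\varphi$ of $H$. By Lemma \ref{lem:bundle_isom} and Theorem \ref{prop:universal_property} applied to $\omega_X'=\omega_\varphi$, the section $s_\varphi$ is smooth and corresponds to a unique $u_\varphi\in C^\infty(E)$. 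Let $\psi_\varphi\in C^\infty(X)$ be the distinguished fibrewise potential $\psi_\varphi|_{X_b}:=\int_0^1\exp(t\nabla^{1,0}u_{\varphi,b})^*u_{\varphi,b}\,dt$, constructed as in the proof of Theorem \ref{thm:geodesic_existence}. Then $\omega_X+i\d\db\psi_\varphi$ and $\omega_\varphi$ have identical fibre restrictions, so by Lemma \ref{lem:base_form} there is a unique $f_\varphi\in C^\infty(B)$ with $\varphi=\psi_\varphi+\pi^*f_\varphi$; set $\Phi(\varphi):=(s_\varphi,f_\varphi)$.

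Conversely, any $(s,f)\in C^\infty(H)\times C^\infty(B)$ yields $u\in C^\infty(E)$ via Theorem \ref{prop:universal_property}, then a potential $\psi_s$ by the same integration formula, and finally $\Psi(s,f):=\psi_s+\pi^*f\in\K_E$; uniqueness in Theorem \ref{prop:universal_property} and Lemma \ref{lem:base_form} show that $\Phi$ and $\Psi$ are mutually inverse. Smoothness of both maps in the Fr\'echet topology is inherited from the smoothness of the bundle identifications $E\cong H\hookrightarrow\K_0$ established in Lemma \ref{lem:bundle_isom} and Theorem \ref{prop:universal_property}, together with smoothness of the fibrewise exponential maps.

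For the Riemannian content, under $d\Phi_\varphi$ the factor $T_{f_\varphi}C^\infty(B)=C^\infty(B)$ maps onto the $C^\infty(B)$ summand of $T_\varphi\K_E$, while $T_{s_\varphi}C^\infty(H)$ is identified with $C^\infty(E_\varphi)$ via the canonical identification at each $b$ of $T_{s_\varphi(b)}H_b$ with the real holomorphy potentials of $\omega_\varphi|_{X_b}$ of zero integral (Lemma \ref{lem:holomorphy/homog_diffeo}). This is precisely the splitting of Proposition \ref{prop:K_E_tangent_decomp}. The two summands are $L^2$-orthogonal: for $f\in C^\infty(B)$ and $e\in C^\infty(E_\varphi)$,
\[
\int_X f\cdot e\,\omega_\varphi^m\wedge\omega_B^n=\int_B f(b)\Bigl(\int_{X_b}e\,\omega_\varphi^m\Bigr)\omega_B^n=0,
\]
since $e|_{X_b}$ has zero fibre integral. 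The total fibre volume $\int_{X_b}\omega_\varphi^m$ is a topological invariant, hence independent of $\varphi$, so the induced $L^2$-metric on the $C^\infty(B)$ factor is $\varphi$-independent and flat; on the $C^\infty(H)$ factor it coincides fibrewise with the canonical non-positively curved symmetric-space metric on $G_b/K_b$, integrated against $\omega_B^n$.

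The only non-routine point is the smoothness of $\Phi$, but this is inherited directly from the bundle-theoretic smoothness results proved via the implicit function theorem on Banach bundles in Lemma \ref{lem:bundle_isom} and Theorem \ref{prop:universal_property}; once granted, the rest of the argument reduces to the orthogonality computation above and a matching of summands.
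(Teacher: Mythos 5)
Your proposal is correct and follows exactly the route the paper intends: the paper states this proposition without proof (``clear from the above''), and your argument assembles precisely the ingredients it points to --- the normalised fibrewise potential from Theorem \ref{prop:universal_property} and the integration formula of Theorem \ref{thm:geodesic_existence}, Lemma \ref{lem:base_form} for the base function, and Proposition \ref{prop:K_E_tangent_decomp} for the tangent splitting. Your explicit $L^2$-orthogonality computation and the observation that the fibre volume is topological (making the $C^\infty(B)$ factor flat, consistent with Proposition \ref{prop:curvature}) are exactly the routine verifications the paper elides.
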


This is the relative version of the decomposition \(\K\cong\K_0\times\R\) of the space of K{\"a}hler potentials described above, and thus the smooth functions on \(B\) play the role of the constant functions in this setting.

One can go further with this; the space of K{\"a}hler potentials \(\K\) on a compact K{\"a}hler manifold \((Y,\omega)\) has a natural connection defined as follows. Let \(\phi:[0,1]\to\K\) be a smooth path, and let \(\psi:[0,1]\to C^\infty(Y)\) be a tangent vector along \(\phi\). The covariant derivative of \(\psi\) along \(\phi\) is then given by \[\nabla_{\dot{\phi}_t}\psi=\dot{\psi}_t-\langle\d\psi_t,\d\dot{\phi}_t\rangle_t.\] Taking \(\psi_t=\dot{\phi}_t\), one recovers the geodesic equation. The curvature of this connection at \(\phi\in\K\) is \[R_\phi(\psi,\eta)\xi=-\frac{1}{4}\{\{\psi,\eta\}_\phi,\xi\}_\phi,\] where \(\psi,\eta,\xi\in T_\phi\K\cong C^\infty(X)\) and \(\{\,,\}_\phi\) is the Poisson bracket of the symplectic form \(\omega_\phi:=\omega+i\d\db\phi\); \cite{Mab87,Sem92,Don99}. The sectional curvature is therefore \[K_\phi(\psi,\eta)=-\frac{1}{4}\|\{\psi,\eta\}_\phi\|^2_\phi,\] so \(\K\) is non-positively curved. Note that each slice \(\{\phi\}\times\R\subset\K_0\times\R\cong\K\) is a flat geodesic submanifold.

The exact same calculations in the relative setting (compare to \cite[Section 4]{Blo13}) show the following: \begin{proposition}\label{prop:curvature}
	There is a connection on \(\K_E\) given by \[\nabla_{\dot{\varphi_t}}\psi=\dot{\psi}_t-\langle\d_{\V}\psi_t,\d_{\V}\dot{\varphi_t}\rangle_{\V,t},\] where \(\varphi:[0,1]\to \K_E\) and \(\psi_t\in T_{\varphi_t}\K_E\) is a tangent vector along \(\varphi_t\). The curvature of this connection at \(\varphi\in\K_E\) is \[R_\varphi(\psi,\eta)\xi=-\frac{1}{4}\{\{\psi,\eta\}_{\V,\varphi},\xi\}_{\V,\varphi},\] where \(\psi,\eta,\xi\in T_\varphi\K_E\subset C^\infty(X)\) and \(\{\,,\}_{\V,\varphi}\) is the fibrewise Poisson bracket corresponding to the relatively symplectic form \(\omega_{X,\varphi}:=\omega_X+i\d\db\varphi\). The sectional curvature is \[K_\varphi(\psi,\eta)=-\frac{1}{4}\|\{\psi,\eta\}_{\V,\varphi}\|^2_\varphi,\] so \(\K_E\) is non-positively curved. The subspace \(C^\infty(B)\subset\K_E\) is a flat totally geodesic submanifold.
\end{proposition}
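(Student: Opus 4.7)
The plan is to transfer the Mabuchi–Semmes–Donaldson calculations for the space of Kähler potentials to the relative setting, with every operator ($\d$, $\Delta$, Poisson bracket, norm) replaced by its fibrewise analogue ($\d_\V$, $\Delta_\V$, $\{\,,\}_{\V}$, $\|\cdot\|_\V$). First I would verify that the formula really defines the Levi–Civita connection of the $L^2$ metric \eqref{eq:inner_product} restricted to $\K_E$. Torsion-freeness is immediate since the correction term $\langle\d_\V\psi_t,\d_\V\dot{\varphi}_t\rangle_{\V,t}$ is symmetric in its two function arguments. For metric compatibility, I would differentiate $\langle\psi_t,\eta_t\rangle_{\varphi_t}=\int_X\psi_t\eta_t\,\omega_t^m\wedge\omega_B^n$ along a smooth path $\varphi_t\in\K_E$. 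The derivative of the volume form contributes $\int_X\psi_t\eta_t\,\Delta_{\V,t}\dot{\varphi}_t\,\omega_t^m\wedge\omega_B^n$, which after a fibrewise integration by parts splits into the cross-term $-\int_X\langle\d_\V\psi_t,\d_\V\dot{\varphi}_t\rangle_{\V,t}\eta_t\,\omega_t^m\wedge\omega_B^n$ and the analogous term with $\psi_t,\eta_t$ swapped, matching the correction in the definition of $\nabla$ exactly. Setting $\psi_t=\dot{\varphi}_t$ recovers the geodesic equation derived at the start of this section, confirming the identification.

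For the curvature, I would extend $\psi,\eta,\xi\in T_\varphi\K_E$ to vector fields constant in $\varphi$ (so $[\psi,\eta]=0$) and compute $R_\varphi(\psi,\eta)\xi=\nabla_\psi\nabla_\eta\xi-\nabla_\eta\nabla_\psi\xi$ at $\varphi$. Each iterated derivative produces fibrewise Hessian-type expressions involving $\d_\V$ together with variations of the vertical metric determined by $\omega_t|_{X_b}$. After antisymmetrisation in $(\psi,\eta)$, the remaining terms combine via the standard identity expressing fibrewise Poisson brackets in terms of $\d_\V$-inner products, yielding the claimed formula $R_\varphi(\psi,\eta)\xi=-\tfrac{1}{4}\{\{\psi,\eta\}_{\V,\varphi},\xi\}_{\V,\varphi}$. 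The computation is formally identical to that of \cite[Section 4]{Blo13} applied on each fibre $(X_b,\omega_{b,\varphi})$, and no horizontal derivatives ever appear because the connection itself only involves $\d_\V$. The sectional curvature formula follows from skew-symmetry of $\{\,,\}_{\V,\varphi}$ under the inner product \eqref{eq:inner_product} together with a fibrewise integration by parts, giving $K_\varphi(\psi,\eta)=-\tfrac14\|\{\psi,\eta\}_{\V,\varphi}\|_\varphi^2\leq0$ for an orthonormal pair.

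For the flat geodesic submanifold claim, any $f\in C^\infty(B)$ viewed on $X$ via $\pi^*$ satisfies $\d_\V f=0$ since $f$ is constant along fibres. Hence the correction term in $\nabla$ vanishes on paths in $C^\infty(B)\subset\K_E$, so straight lines $t\mapsto(1-t)f_0+tf_1$ are geodesics; moreover $\{f,g\}_{\V,\varphi}=0$ for any $f,g\in C^\infty(B)$, so the curvature tensor vanishes identically on this subspace, making it a flat geodesic submanifold.

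The main obstacle I anticipate is the bookkeeping in the curvature calculation: matching the fibrewise computation line-by-line with the classical derivation, and checking that the Poisson-bracket identities used hold at the level of smooth functions on $X$ even though the bracket is only fibrewise. A secondary concern is verifying that $\nabla_{\dot{\varphi}_t}\psi_t$ remains tangent to $\K_E$; this should follow either by direct inspection of the formula in terms of the decomposition $T_\varphi\K_E=C^\infty(B)\oplus C^\infty(E_\varphi)$ from Proposition \ref{prop:K_E_tangent_decomp}, or by appealing to total geodesity of $\K_E$ inside the ambient space of relatively Kähler potentials, which is a consequence of Theorem \ref{thm:geodesic_existence}.
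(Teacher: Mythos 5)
Your proposal is correct and follows essentially the same route as the paper, which gives no detailed argument but simply asserts that the Mabuchi--Semmes--Donaldson computations carry over verbatim with every operator replaced by its fibrewise (vertical) analogue, citing \cite[Section 4]{Blo13}; your verification of metric compatibility via the variation of \(\omega_t^m\wedge\omega_B^n\) and fibrewise integration by parts, the curvature computation, and the observation that \(\d_\V\pi^*f=0\) kills both the correction term and the bracket on \(C^\infty(B)\) are precisely the omitted details. Your flagged concern about \(\nabla_{\dot{\varphi}_t}\psi_t\) remaining tangent to \(\K_E\) is a point the paper does not address at all, and either of your proposed resolutions (closure of \(C^\infty(B)\oplus C^\infty(E_\varphi)\) under the correction term, or total geodesy of \(\K_E\) via the explicit geodesics of Theorem \ref{thm:geodesic_existence}) settles it.
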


Note that the fibres \(H_b\) have their own curvatures as Riemannian symmetric spaces: if \(u,v,w\in T_{K_b}H_b=i\cdot\mathfrak{k}_b\) (see Lemma \ref{lem:homog_space}), then \[R_{K_b}(u,v)w=-\frac{1}{4}[[u,v],w],\] where \([\,,]\) is the Lie bracket of the Lie algebra of \(G_b\). Since the isomorphism \(E_b\cong T_{K_b}H_b\) is essentially the hamiltonian vector field construction, this corresponds precisely to the Poisson bracket of functions in \(E_b\). So the natural curvatures of the spaces \(H_b\) gives rise to the curvature on \(C^\infty(H)\) of Proposition \ref{prop:curvature}.

\section{Convexity along geodesics and applications to uniqueness and boundedness}\label{functional}

\subsection{Convexity of \(\mathcal{N}\) along geodesics}

Recall the log-norm functional for fibrations is defined through its first derivative along smooth paths \(\varphi_t\) in \(\K_E\): \[\frac{d}{dt}\mathcal{N}(\varphi_t)=-\int_X\dot{\varphi}_t\,p_t(\theta_t)\,\omega_t^m\wedge\omega_B^n,\] where \(p_t(\theta_t)\) is the optimal symplectic connection operator applied to the relatively cscK metric \(\omega_t:=\omega_X+i\d\db\varphi_t\), as defined in equation \eqref{eq:OSC}. \begin{theorem}[Convexity]\label{thm:fib_convexity}
	The functional \(\mathcal{N}:\mathcal{K}_E\to\R\) is convex along smooth geodesics in \(\K_E\). If \(\varphi_t\) is a smooth geodesic in \(\K_E\), the function \(t\mapsto\N(\varphi_t)\) is affine linear if and only if there exist \(u\in C^\infty(E)\) and \(f\in C^\infty(B)\) such that \(v:=\nabla_\V^{1,0}u\) is a holomorphic vector field, and \[\varphi_t=\psi_t+tf,\] where \(\psi_t\) is defined as in \eqref{eq:dummy2}. Otherwise, \(\mathcal{N}(\varphi_t)\) is strictly convex.
\end{theorem}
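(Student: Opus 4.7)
The plan is to compute $\ddot{\N}(\varphi_t)$ along a smooth geodesic and show it is the squared norm of a natural first-order operator applied to the velocity, whose kernel consists of velocities generating global holomorphic vector fields on $X$.

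By Proposition~\ref{prop:K_E_tangent_decomp} and the explicit form of smooth geodesics from Section~\ref{sec:geodesic_equation}, decompose $\dot\varphi_t = \sigma_t + f$ with $f \in \pi^* C^\infty(B)$ a $t$-independent base function and $\sigma_t \in C^\infty(E_t)$ a fibrewise real holomorphy potential for $\omega_t$. Since $p_t(\theta_t) \in C^\infty(E_t)$ has vanishing fibrewise integral, the base part $f$ drops out of the $L^2$-pairing, giving
\[
\dot{\N}(\varphi_t) = -\int_X \sigma_t\, p_t(\theta_t)\, \omega_t^m \wedge \omega_B^n.
\]

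Differentiating in $t$, three contributions appear: the variation of the measure (yielding a vertical Laplacian term that combines with $\ddot\varphi_t = |\partial_\V \dot\varphi_t|^2_{\V,t}$ from the geodesic equation); the variation $\dot\sigma_t$ (governed fibrewise by the Matsushima--Lichnerowicz geodesic, since the metrics $\omega_t|_{X_b}$ stay cscK); and the variation $\tfrac{d}{dt} p_t(\theta_t)$, which is the linearisation of the optimal symplectic connection operator in the direction $\sigma_t$. The crux is to identify this linearisation, modulo $\pi^*C^\infty(B)$, with $\mathcal{R}_t^* \mathcal{R}_t \sigma_t$, where $\mathcal{R}_t(u) := \bar\partial(\nabla^{1,0}_\V u)$ is the natural Dolbeault-type operator measuring the failure of $\nabla_\V^{1,0} u$ to be a globally holomorphic vector field on $X$ rather than merely fibrewise holomorphic. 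After fibrewise integration by parts, the three pieces reorganise into
\[
\ddot{\N}(\varphi_t) = \|\mathcal{R}_t \sigma_t\|^2_t \ge 0,
\]
which proves convexity. For the equality case, $\ddot{\N}(\varphi_t) \equiv 0$ forces $\mathcal{R}_t \sigma_t \equiv 0$ identically. Setting $u := \sigma_0 \in C^\infty(E)$, this at $t = 0$ says exactly that $v := \nabla^{1,0}_\V u$ is a global holomorphic vector field on $X$. Proposition~\ref{prop:holomorphy_geodesics} (applied to $X$ with the now globally defined $v$) produces a smooth geodesic $\psi_t = \int_0^t \exp(sv)^* u\, ds$ in $\K_E$ with $\dot\psi_0 = u$; since $\psi_t + tf$ shares the same initial data with $\varphi_t$, uniqueness from Theorem~\ref{thm:geodesic_existence} gives $\varphi_t = \psi_t + tf$.

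The main obstacle is the second-derivative computation itself: correctly linearising the composite operator $p(\theta)$ along the geodesic and showing, after integration by parts and use of the geodesic equation, that the variation of the projection $p_t$, together with the contributions of the curvatures $\rho_\H$ and the symplectic curvature $F_\H$, all conspire so that the cross-terms cancel and only the perfect square $\|\mathcal{R}_t \sigma_t\|^2_t$ survives. This mirrors the Mabuchi identity $\ddot{\mathcal{M}}(\phi_t) = \|\mathcal{D}\dot\phi_t\|^2$ in the K\"ahler setting, with $\mathcal{R}$ playing the role of the fibred analogue of the Lichnerowicz operator that governs the linearisation of the optimal symplectic connection equation.
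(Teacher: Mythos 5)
Your outline reproduces the paper's strategy -- compute the second variation of \(\N\), linearise the optimal symplectic connection operator, invoke the geodesic equation, and conclude \(\ddot{\N}(\varphi_t)=\|\mathcal{R}_t\dot{\varphi}_t\|_t^2\), with the equality case governed by global holomorphicity of \(\nabla^{1,0}_\V\dot{\varphi}_0\) -- but the step you defer as ``the main obstacle'' is the actual content of the proof, and the form in which you assert it is incorrect. The linearisation of \(\varphi\mapsto p_\varphi(\theta_\varphi)\) in a direction \(\psi\in C^\infty(B)\oplus C^\infty(E_\varphi)\) is \emph{not} \(-\mathcal{R}_\varphi^*\mathcal{R}_\varphi\psi\) modulo \(\pi^*C^\infty(B)\); it is \[-\mathcal{R}_\varphi^*\mathcal{R}_\varphi\psi+\langle\d_{\mathcal{V}}p_\varphi(\theta_\varphi),\d_{\mathcal{V}}\psi\rangle_{\mathcal{V},\varphi},\] and the gradient term is neither a pullback from \(B\) nor negligible away from the critical locus of \(\N\). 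It does not drop out of the pairing with \(\dot{\varphi}_t\): after fibrewise integration by parts it contributes \(-\tfrac12\int_X p_t(\theta_t)\,\Delta_{\V,t}(\dot{\varphi}_t^2)\,\omega_t^m\wedge\omega_B^n\), which must combine with the variation of the volume form (the \(\dot{\varphi}_t\,p_t(\theta_t)\,\Delta_{\V,t}\dot{\varphi}_t\) term) and the \(\ddot{\varphi}_t\,p_t(\theta_t)\) term to produce \(-\int_X(\ddot{\varphi}_t-|\d_\V\dot{\varphi}_t|^2_{\V,t})\,p_t(\theta_t)\,\omega_t^m\wedge\omega_B^n\); only then does the geodesic equation annihilate everything except the perfect square. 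If the gradient term is dropped, as your ``modulo \(\pi^*C^\infty(B)\)'' reading would allow, the second variation along a geodesic retains the leftover \(-\tfrac12\int_X p_t(\theta_t)\Delta_{\V,t}(\dot{\varphi}_t^2)\,\omega_t^m\wedge\omega_B^n\), which has no reason to vanish unless \(p_t(\theta_t)=0\), i.e.\ at an optimal symplectic connection -- so no perfect square emerges and convexity is not established. Moreover, deriving the correct linearisation is not a formal manipulation: in the paper it is extracted as the \(k^{-1}\)-coefficient of the Lichnerowicz linearisation of \(S(\omega_k+k^{-1}i\d\db\psi_R)\), using the adiabatic expansion \(S(\omega_k+k^{-1}i\d\db\psi_R)=S(\omega_b)+k^{-1}(S(\omega_B)+p(\theta))+\O(k^{-2})\) together with separate estimates showing the horizontal gradient pairing is \(\O(k^{-2})\) and that the order-\(k^{-1}\) part of the Lichnerowicz operator is \(\mathcal{R}^*\mathcal{R}\). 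None of this machinery appears in your proposal, so the argument is incomplete precisely at its core.

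Two smaller points. First, your reduction \(\dot{\varphi}_t=\sigma_t+f\) with \(f\) independent of \(t\), and the observation that \(f\) pairs to zero against \(p_t(\theta_t)\), are both fine but unnecessary: the paper computes directly with \(\dot{\varphi}_t\), since \(\mathcal{R}_t\) kills base functions anyway. Second, in the equality case your appeal to Theorem \ref{thm:geodesic_existence} is misplaced: that theorem gives uniqueness for the \emph{boundary-value} problem on \([0,1]\), whereas you need uniqueness for geodesics with prescribed \emph{initial} position and velocity. This is still true, but the justification should go through the Riemannian decomposition \(\K_E\cong C^\infty(H)\times C^\infty(B)\): a smooth geodesic in \(\K_E\) restricts fibrewise to a geodesic of the finite-dimensional symmetric space \(G_b/K_b\) (times a flat factor), where initial-value uniqueness is standard ODE theory; in the infinite-dimensional space of K\"ahler potentials, the initial-value problem for the geodesic equation is not well-posed in general, so the citation as written does not cover the step.
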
 We note that for \(u\in C^\infty(E)\), the vertical vector field \(\nabla^{1,0}_{\V}u\) is holomorphic on each fibre, but may not be globally holomorphic as is required in the theorem. Before proving this, we must first linearise the optimal symplectic connection operator \(\varphi\mapsto p_\varphi(\theta_\varphi)\), where \(p_\varphi(\theta_\varphi)\) is given by the left-hand side of \eqref{eq:OSC} with respect to the reference metric \(\omega_{X,\varphi}:=\omega_X+i\d\db\varphi\). This is achieved through the following expansion:

\begin{lemma}[{\cite[Proposition 2.4]{DS_moduli}}]\label{lem:expansion}
Let \(\omega_X\) be relatively cscK, and for \(k\gg0\) write \(\omega_k:=\omega_X+k\omega_B\), which is a K{\"a}hler metric on \(X\). Then there exists a smooth function \(\psi_R\in C^\infty_R(X)\) such that there is a \(C^\infty\)-expansion \[S(\omega_k+k^{-1}i\d\db\psi_R)=S(\omega_b)+k^{-1}(S(\omega_B)+p(\theta))+\O(k^{-2}),\] where \(S(\omega_b)\) is the smooth function on \(X\) whose restriction to the fibre \(X_b\) is the scalar curvature of \(\omega_b\), and \(\theta:=\Delta_\V\Lambda_{\omega_B}\nu^*F_\H+\Lambda_{\omega_B}\rho\).
\end{lemma}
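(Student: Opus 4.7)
The plan is a direct asymptotic expansion of $S(\omega_k+k^{-1}i\d\db\psi_R)$ in powers of $k^{-1}$, exploiting the relative cscK hypothesis to identify the leading term and exploiting invertibility of the fibrewise Lichnerowicz operator on $C^\infty_R(X)$ to choose the correction $\psi_R$. The adiabatic family $\omega_k=\omega_X+k\omega_B$ rescales the base to be large, so $S(\omega_k)$ should be dominated at order $k^0$ by the fibrewise scalar curvature, with base and mixed curvature contributions appearing at order $k^{-1}$.

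I would begin from the $\omega_X$-orthogonal splitting $TX=\V\oplus\H$ and use that $\omega_B^{n+1}=0$ (as $\omega_B$ is pulled back from a space of complex dimension $n$) to expand
\[\omega_k^{m+n}=k^n\binom{m+n}{n}(\omega_X)_\V^m\wedge\omega_B^n+\O(k^{n-1}),\]
so that $\log\omega_k^{m+n}=n\log k+\log((\omega_X)_\V^m\wedge\omega_B^n)+k^{-1}\Lambda_{\omega_B}(\omega_X)_\H+\O(k^{-2})$. Taking $-\tfrac{i}{2\pi}\d\db$ produces $\Ric(\omega_k)$ as the sum of the fibrewise Ricci, the pulled-back base Ricci, and $\rho$-terms arising from $i\d\db$ of the log of the vertical volume form. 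For the contraction, one inverts $\omega_k$ in $\V\oplus\H$-block form; the block on $\H$ inverts as $k^{-1}\omega_B^{-1}(I-k^{-1}\omega_B^{-1}(\omega_X)_\H+\cdots)$, so $\Lambda_{\omega_k}=\Lambda_\V+k^{-1}\Lambda_{\omega_B}+\O(k^{-2})$. The subleading term is where the non-integrability of $\H$ enters and produces the $F_\H$ contribution, because contracting $\d\db$ of a vertical quantity with horizontal directions picks up the Lie bracket $[v_1^\#,v_2^\#]_\V$ of horizontal lifts. Combining the two expansions yields
\[S(\omega_k)=S(\omega_b)+k^{-1}\bigl(S(\omega_B)+\theta+\theta_R\bigr)+\O(k^{-2}),\]
with $\theta=\Delta_\V\Lambda_{\omega_B}\mu^*F_\H+\Lambda_{\omega_B}\rho_\H$ and $\theta_R$ accounting for the piece in $C^\infty_R(X)$.

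To eliminate the $C^\infty_R$-component, perturbing $\omega_k$ by $k^{-1}i\d\db\psi_R$ shifts $S(\omega_k)$ at order $k^{-1}$ by the leading part of the linearised scalar curvature operator, which on each fibre is the negative Lichnerowicz operator $-\mathcal{D}_b^*\mathcal{D}_b\psi_R$. Since the kernel of $\mathcal{D}_b^*\mathcal{D}_b$ is $\mathbb{R}\oplus E_b$, exactly the $L^2$-orthogonal complement of $C^\infty_R(X_b)$, the restriction to $C^\infty_R(X_b)$ is invertible, and we may solve $\mathcal{D}_b^*\mathcal{D}_b\psi_{R,b}=(\theta_R)_b$ fibrewise; smoothness of $\psi_R$ in $b$ then follows from elliptic regularity in families, as in Lemma \ref{lem:bundle_isom}. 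The main obstacle is the combinatorial identification of the subleading term: tracking how the symplectic curvature $F_\H$, the curvature $\rho$ of the vertical determinant bundle, and the cross-block inversion of $\omega_k$ combine to produce exactly $\Delta_\V\Lambda_{\omega_B}\mu^*F_\H+\Lambda_{\omega_B}\rho_\H$ at order $k^{-1}$ requires careful use of the fibrewise comoment map identity to convert horizontal Lie brackets into $\mu^*F_\H$, and of the identification of $i\d\db$ of the log of the fibrewise volume form with $\rho$ modulo fibrewise exact terms.
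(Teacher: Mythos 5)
The paper never proves this lemma --- it is imported wholesale from \cite[Proposition 2.4]{DS_moduli} --- so the comparison is with the original Dervan--Sektnan argument, and your sketch does reconstruct that argument along the correct lines. The three computational inputs are all right: the Ricci expansion \(\Ric(\omega_k)=\rho+\Ric(\omega_B)+k^{-1}i\d\db(\Lambda_{\omega_B}\omega_X)+\O(k^{-2})\) (which this paper itself quotes later, as \cite[Lemma 4.6]{DS_osc}, in the expansion of \(R_k\)); the blockwise inversion \(\Lambda_{\omega_k}=\Lambda_\V+k^{-1}\Lambda_{\omega_B}+\O(k^{-2})\), legitimate because \(\H\) is the same for \(\omega_k\) and \(\omega_X\); and the comoment identity, by which \((\omega_X)_\H(v_1^\#,v_2^\#)\) and \(\mu^*F_\H(v_1,v_2)\) differ fibrewise by a constant, so that \(\Delta_\V\Lambda_{\omega_B}(\omega_X)_\H=\Delta_\V\Lambda_{\omega_B}\mu^*F_\H\). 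Your combinatorics also check out: \(\binom{m+n}{n-1}/\binom{m+n}{n}=n/(m+1)\) together with \(\omega_B^{n-1}\wedge\omega_X^{m+1}=\tfrac{m+1}{n}(\Lambda_{\omega_B}\omega_X)\,\omega_B^n\wedge\omega_X^m\) (the same identity the paper uses in its \(H_k\) computation) gives exactly the coefficient \(\Lambda_{\omega_B}(\omega_X)_\H\) in your log-expansion. Likewise the closing step --- invertibility of the fibrewise Lichnerowicz operator on \(C^\infty_R\), with smoothness in \(b\) resting on constancy of \(\dim E_b\) so that the kernels \(\R\oplus E_b\) form a bundle --- is the right mechanism, and the fibrewise cscK hypothesis is what guarantees the perturbation \(k^{-1}i\d\db\psi_R\) contributes at order \(k^{-1}\) exactly \(-\mathcal{D}_\V^*\mathcal{D}_\V\psi_R\), with no gradient term, since \(S(\omega_b)\) is constant.

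There is, however, a genuine bookkeeping error in your displayed expansion \(S(\omega_k)=S(\omega_b)+k^{-1}(S(\omega_B)+\theta+\theta_R)+\O(k^{-2})\), with ``\(\theta_R\) accounting for the piece in \(C^\infty_R\)''. With \(\theta\) as defined in the statement, the order-\(k^{-1}\) coefficient is exactly \(S(\omega_B)+\theta\): there is no extra remainder \(\theta_R\). The function your \(\psi_R\) must cancel is the \(C^\infty_R\)-component of \(\theta\) itself, i.e.\ \(p_R(\theta)\) in the decomposition \(\theta=p_B(\theta)+p(\theta)+p_R(\theta)\) induced by \eqref{eq:orthog_decomp}. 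As written, your argument removes a phantom term and leaves all of \(\theta\) at order \(k^{-1}\), so it never produces the projection \(p\) that is the entire point of the statement. Repairing this requires two remarks you omit: (i) the image of \(\mathcal{D}_\V^*\mathcal{D}_\V\) on \(C^\infty_R(X)\) is fibrewise \(L^2\)-orthogonal to \(\R\oplus E_b\) (self-adjointness on each fibre), so solving \(\mathcal{D}_b^*\mathcal{D}_b\psi_{R,b}=p_R(\theta)|_{X_b}\) does not reintroduce \(C^\infty(B)\oplus C^\infty(E)\) terms; (ii) for the same reason, \(\psi_R\) \emph{cannot} remove the component \(p_B(\theta)\), so your derivation actually yields \(S(\omega_B)+p_B(\theta)+p(\theta)\) at order \(k^{-1}\). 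You should reconcile this pullback term with the quoted statement --- it is harmless for every subsequent use in the paper, since those only ever apply \(\d_\V\) to this coefficient or pair it through \(p\), both of which annihilate functions pulled back from \(B\) --- but a complete proof must either absorb it into the \(C^\infty(B)\)-term explicitly or verify the normalisation under which it is suppressed in \cite[Proposition 2.4]{DS_moduli}.
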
 Notice that since \(\omega_X\) is relatively cscK, the leading term \(S(\omega_b)\) is a topological constant \(\hat{S}(\omega_b)\) independent of \(b\). To see that \(\hat{S}(\omega_b)\) does not depend on \(b\), we can for example note the fibre integral formula \[\hat{S}(\omega_b)=\frac{m\int_{X/B}\rho\wedge\omega^{m-1}}{\int_{X/B}\omega^m}(b).\] Since fibre integrals of closed forms produce closed forms on the base, the resulting functions on \(B\) in the numerator and denominator are closed, hence constant.

For any \(\varphi\in\K_E\), define the operator \(\mathcal{R}_\varphi:C^\infty(X)\to\Omega^{0,1}(\Lambda^{1,0}\V)\) by \[\mathcal{R}_\varphi\psi:=\db\nabla^{1,0}_{\V,\varphi}\psi,\] where \(\nabla_{\V,\varphi}\) is the vertical fibrewise gradient operator of \(\omega_{X,\varphi}\). When \(\psi\in C^\infty(B)\oplus C^\infty(E_\varphi)=T_\varphi\K_E\) is a fibrewise holomorphy potential, \(\nabla^{1,0}_{\V,\varphi}\psi\) is a fibrewise holomorphic vector field on \(X\) that is globally holomorphic if and only if \(\mathcal{R}_\varphi\psi=0\). \begin{proposition}
The linearisation of the operator \(\varphi\mapsto p_\varphi(\theta_\varphi)\) at \(\varphi\in\K_E\) in the direction \(\psi\in C^\infty(B)\oplus C^\infty(E_\varphi)=T_\varphi\K_E\) is given by \[-\mathcal{R}_\varphi^*\mathcal{R}_\varphi\psi+\langle\d_{\mathcal{V}}p_\varphi(\theta_\varphi),\d_{\mathcal{V}}\psi\rangle_{\mathcal{V},\varphi}.\]
\end{proposition}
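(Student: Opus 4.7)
My plan is to deduce this formula from the classical linearisation of the scalar curvature operator on a compact Kähler manifold, together with the adiabatic expansion in the preceding lemma. Recall that for a Kähler metric \(\omega\) on a compact manifold, the derivative of \(\omega\mapsto S(\omega)\) in a direction \(\chi\) (along \(t\mapsto \omega + t\,i\d\db\chi\)) is
\[D_\omega S\cdot\chi = -\mathcal{D}_\omega^*\mathcal{D}_\omega\chi + \tfrac{1}{2}\bigl\langle \nabla S(\omega),\nabla\chi\bigr\rangle_\omega,\]
where \(\mathcal{D}_\omega = \db\nabla^{1,0}\) is the Lichnerowicz operator of \(\omega\).

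For \(\varphi \in \K_E\) and \(\psi \in T_\varphi \K_E\), I would differentiate the expansion
\[S\bigl(\omega_k + i\d\db\varphi + k^{-1}i\d\db\psi_R(\varphi)\bigr) = S(\omega_b) + k^{-1}\bigl(S(\omega_B) + p_\varphi(\theta_\varphi)\bigr) + \O(k^{-2})\]
with respect to \(\varphi\) in the direction \(\psi\). Since \(S(\omega_b)\) is a topological constant on \(\K_E\), comparing \(k^{-1}\)-coefficients identifies \(D_\varphi(p(\theta))\cdot\psi\) with the \(k^{-1}\)-coefficient of the classical linearisation applied to the varied potential \(\psi + k^{-1}(D\psi_R)_\varphi\psi\) at the adiabatic Kähler metric \(\omega_k + i\d\db(\varphi + k^{-1}\psi_R(\varphi))\).

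The substance lies in adiabatically expanding the two pieces of the classical formula. Since the leading term of the scalar curvature is a constant, \(\nabla S\) is already \(\O(k^{-1})\), with leading coefficient \(\nabla(S(\omega_B) + p(\theta))\); the \(S(\omega_B)\) part is pulled back from \(B\) and has vanishing vertical derivative. Pairing with \(\nabla\psi\) in \(\omega_k\), whose inverse on \((1,0)\)-forms is to leading order the vertical co-metric of \(\omega_{X,\varphi}\), extracts only the vertical component and produces \(\langle\d_\V p_\varphi(\theta_\varphi),\d_\V\psi\rangle_{\V,\varphi}\) at the relevant order, the factor \(\tfrac12\) being absorbed into conventions. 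For the Lichnerowicz term, since \(\psi \in C^\infty(B)\oplus C^\infty(E_\varphi)\) the vertical gradient \(\nabla^{1,0}_\V\psi\) is fibrewise holomorphic, so in the adiabatic expansion the horizontal contributions to \(\mathcal{D}_k^*\mathcal{D}_k\psi\) are suppressed by additional factors of \(k^{-1}\) and the leading coefficient is exactly \(\mathcal{R}_\varphi^*\mathcal{R}_\varphi\psi\). The variation of \(\psi_R\) carries an extra \(k^{-1}\) and therefore drops out at the relevant order.

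Matching the \(k^{-1}\)-coefficients and applying \(p\) then yields the claimed formula; the projection is automatic because both sides, after subtracting the pulled-back \(S(\omega_B)\), already lie in the image of \(p\). The main technical obstacle I anticipate is the careful bookkeeping of the fourth-order operator \(\mathcal{D}_k^*\mathcal{D}_k\) in the adiabatic expansion — specifically, verifying that for \(\psi\in T_\varphi\K_E\) there are no \(k^0\) contributions from mixed horizontal–vertical terms, so that the vertical Lichnerowicz \(\mathcal{R}_\varphi^*\mathcal{R}_\varphi\) really is the leading-order term. That this holds is precisely the geometric content of the assumption that \(\psi\) is fibrewise a holomorphy potential, so that \(\nabla^{1,0}_\V\psi\) is already in the kernel of the fibrewise \(\db\).
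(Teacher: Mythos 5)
Your proposal is correct and follows essentially the same route as the paper's proof: both differentiate the adiabatic expansion \(S(\omega_k+k^{-1}i\d\db\psi_R)=S(\omega_b)+k^{-1}(S(\omega_B)+p(\theta))+\O(k^{-2})\) using the classical formula for the linearised scalar curvature, kill the horizontal part of the gradient term via the two extra factors of \(k^{-1}\) (one from \(\d_\H S(\omega_k')\) being \(\O(k^{-1})\), one from the horizontal inverse metric), and identify the \(k^{-1}\)-coefficient of the Lichnerowicz term as \(\mathcal{R}_\varphi^*\mathcal{R}_\varphi\psi\) using that \(\nabla^{1,0}_\V\psi\) is fibrewise holomorphic (the paper simply cites \cite[Section 4.4]{DS_osc} for this step). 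The only point of divergence is cosmetic: you track the variation of \(\psi_R\) explicitly, though strictly its contribution \(k^{-1}\mathcal{D}_\V^*\mathcal{D}_\V\dot{\psi}_R\) sits at the relevant order and is discarded because its image is fibrewise \(L^2\)-orthogonal to \(C^\infty(B)\oplus C^\infty(E)\) rather than by pure order-counting, a subtlety the paper's proof suppresses entirely.
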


\begin{proof}
	For ease of notation, we fix \(\omega_{X,\varphi}\) as our new reference metric \(\omega_X\), so that \(\varphi=0\). Consider the expansion \begin{equation}\label{dummy4}
	S(\omega_k+k^{-1}i\d\db\psi_R)=S(\omega_b)+k^{-1}(S(\omega_B)+p(\theta))+\O(k^{-2}),
	\end{equation} from Lemma \ref{lem:expansion}, and write \(\omega_k':=\omega_k+k^{-1}i\d\db\psi_R\). Suppose we vary \(\omega_X\) by \(ti\d\db \psi\), where \(\psi\in C^\infty(B)\oplus C^\infty(E)\), and differentiate at \(t=0\). Then the linearisation of the left hand side of equation \eqref{dummy4} is \begin{equation}\label{dummy?}
	-(\mathcal{D}'_k)^*\mathcal{D}'_k\psi+\langle\d S(\omega_k'),\d\psi\rangle_{\omega_k'}
	\end{equation} where \((\mathcal{D}_k')^*\mathcal{D}_k'\) is the Lichnerowicz operator of \(\omega_k'\); see \cite[Lemma 4.4]{Sze14}.
	
	Consider the term \(\langle\d S(\omega_k'),\d\psi\rangle_{\omega_k'}\). We claim there is an expansion \(\langle\,,\rangle_{\omega_k'}=\langle\,,\rangle_{\omega_k}+\O(k^{-1})\) of metrics on \(T^*X\) as \(k\to\infty\). To see this, note the metric on the cotangent bundle determined by \(\omega_k'\) is given locally by inverting the matrix of \(\omega_k'\) in local coordinates. Thus, at a point \(x\in X\), let us choose holomorphic coordinates \(z_1,\ldots,z_n,w_1,\ldots,w_m\) compatible with the fibration, meaning that the projection \(\pi:X\to B\) is given by \((z_1,\ldots,z_n,w_1,\ldots,w_m)\mapsto(z_1,\ldots,z_n)\) in these coordinates. By making a linear change of coordinates, we may further assume that the \(\d/\d z_j\) span the horizontal distribution \(\mathcal{H}\subset TX\) at the point \(x\). Thus, the metric \(\omega_X+k\omega_B\) on \(TX\) takes the form \[\begin{pmatrix}
	A & 0 \\
	0 & B+kC
	\end{pmatrix}\] at the point \(x\). Here \(A\) is the restriction of \(\omega_X\) to the vertical tangent bundle, \(B\) the restriction of \(\omega_X\) to \(\mathcal{H}\), and \(C\) the matrix of \(\omega_B\) in the coordinates \(z_1,\ldots,z_n\). The inverse \(\omega_k^{-1}\) in local coordinates then has an expansion \begin{align*}
	\begin{pmatrix}
		A & 0 \\
		0 & B+kC
	\end{pmatrix}^{-1}
	&=\begin{pmatrix}
		A^{-1} & 0 \\
		0 & (B+kC)^{-1}
    \end{pmatrix} \\
    &=\begin{pmatrix}
		A^{-1} & 0 \\
		0 & (kC)^{-1}(I+k^{-1}BC^{-1})^{-1}
   	\end{pmatrix} \\
   	&=\begin{pmatrix}
   		A^{-1} & 0 \\
   		0 & (kC)^{-1}(I+\O(k^{-1}))
   	\end{pmatrix} \\
   	&= M+\O(k^{-1}),
	\end{align*} where \[M:=\begin{pmatrix}
	   		A^{-1} & 0 \\
	   		0 & 0
	   	\end{pmatrix}.\] We use this to see \begin{align*}
	(\omega_k')^{-1} &= (\omega_k+k^{-1}i\d\db\psi_R)^{-1} \\
	&= \omega_k^{-1}(I+k^{-1}(i\d\db\psi_R)\omega_k^{-1})^{-1} \\
	&= \omega_k^{-1}(I+k^{-1}(i\d\db\psi_R)(M+\O(k^{-1})))^{-1} \\
	&= \omega_k^{-1}(I+\O(k^{-1}))^{-1} \\
	&= \omega_k^{-1}+\O(k^{-1}),
	\end{align*} as claimed. 
	
	Since \(\d S(\omega_k')\) is also of order \(k^{-1}\) by \eqref{dummy4}, we may instead consider \(\langle\d S(\omega_k'),\d\psi\rangle_{\omega_k}\) in place of \(\langle\d S(\omega_k'),\d\psi\rangle_{\omega_k'}\) in \eqref{dummy?}. Splitting this into its horizontal and vertical components with respect to \(\omega_k\) gives \[\langle\d_{\mathcal{H}}S(\omega_k'),\d_{\mathcal{H}}\psi\rangle_{\omega_k}+\langle\d_{\mathcal{V}}S(\omega_k'),\d_{\mathcal{V}}\psi\rangle_{\omega_k}.\] The horizontal term is uniformly of order \(k^{-2}\); the leading coefficient of \(S(\omega_k)\) is constant, so \(\d_\H S(\omega_k)\) is order \(k^{-1}\), and another \(k^{-1}\) factor comes from the inverse of \(\omega_k\) in the horizontal direction, as we see from the above calculation in local coordinates. From the expansion of \(S(\omega_k')\), the order \(k^{-1}\) part of the vertical term is given by \[\langle\d_{\mathcal{V}}p(\theta),\d_{\mathcal{V}}\psi\rangle_{\mathcal{V}}.\] The order \(k^{-1}\) term of \((\mathcal{D}_k')^*\mathcal{D}_k'\psi\) is given by \(\mathcal{R}^*\mathcal{R}\psi\); see \cite[Section 4.4]{DS_osc}. The linearisation of the optimal symplectic connection operator in the direction \(\psi\) is then \[-\mathcal{R}^*\mathcal{R}\psi+\langle\d_{\mathcal{V}}p(\theta),\d_{\mathcal{V}}\psi\rangle_{\mathcal{V}}.\qedhere\]
\end{proof}

 \begin{proof}[Proof of Theorem \ref{thm:fib_convexity}] We compute \begin{align*}
-\frac{d}{dt}\int_X \dot{\varphi}_t\,p_t(\theta_t)\,\omega_t^m\wedge\omega_B^n 
&= -\int_X [\ddot{\varphi}_t\,p_t(\theta_t)+\dot{\varphi}_t(-\mathcal{R}_t^*\mathcal{R}_t\dot{\varphi}_t+\langle\d_{\mathcal{V}}p_t(\theta_t),\d_{\mathcal{V}}\dot{\varphi}_t\rangle_{\mathcal{V},t}) \\ &\quad\quad\quad\quad\quad\quad\quad\quad\quad\quad\quad\quad\quad +\dot{\varphi}_tp_t(\theta_t)\Delta_{\mathcal{V},t}\dot{\varphi}_t]\,\omega_t^m\wedge\omega_B^n .
\end{align*} Breaking this up, we have \begin{align*}
\int_X\dot{\varphi}_t\langle\d_{\mathcal{V}}p_t(\theta_t),\d_{\mathcal{V}}\dot{\varphi}_t\rangle_{\mathcal{V},t}\,\omega_t^m\wedge\omega_B^n &= \int_B\left(\int_{X/B}\frac{1}{2}\langle\d_{\mathcal{V}}p_t(\theta_t),\d_{\mathcal{V}}(\dot{\varphi}_t^2)\rangle_{\mathcal{V},t}\,\omega_t^m\right)\omega_B^n \\
&= \int_B\left(\int_{X/B}-\frac{1}{2}p_t(\theta_t)\Delta_{\mathcal{V},t}(\dot{\varphi}_t^2)\,\omega_t^m\right)\omega_B^n \\
&= \int_X-\frac{1}{2}p_t(\theta_t)\Delta_{\mathcal{V},t}(\dot{\varphi}_t^2)\,\omega_t^m\wedge\omega_B^n.
\end{align*} Also, \begin{align*}
\int_X \dot{\varphi}_tp_t(\theta_t)\Delta_{\mathcal{V},t}\dot{\varphi}_t\,\omega_t^m\wedge\omega_B^n &= \int_X p_t(\theta_t)\left(\frac{1}{2}\Delta_{\mathcal{V},t}(\dot{\varphi}_t^2)-\langle\d_{\mathcal{V}}\dot{\varphi}_t,\d_{\mathcal{V}}\dot{\varphi}_t\rangle_{\mathcal{V},t}\right)\,\omega_t^m\wedge\omega_B^n.
\end{align*} Hence the second variation of \(\mathcal{N}\) is equal to \begin{align*}
\int_X|\mathcal{R}_t\dot{\varphi}_t|^2\omega_t^m\wedge\omega_B^n-\int_X(\ddot{\varphi}_t-|\d_{\mathcal{V}}\dot{\varphi}_t|^2_{\mathcal{V},t})\,p_t(\theta_t)\,\omega_t^m\wedge\omega_B^n.
\end{align*} If \(\varphi_t\) satisfies the geodesic equation then this is non-negative, and the functional is convex. It is strictly convex unless \(\mathcal{R}_t\dot{\varphi}_t=0\), which holds if and only if the fibrewise-holomorphic vector field \(\nabla^{1,0}_t\dot{\varphi}_t\) is globally holomorphic on \(X\). \end{proof}

\subsection{Applications of convexity to uniqueness and boundedness}

We recall from Definition \ref{def:pi_aut} the group \(\Aut_0(\pi)\), which is the identity component of the group of biholomorphisms \(g:X\to X\) satisfying \(\pi\circ g=\pi\). Using our convexity result, we can give a new proof of Theorem \ref{thm:OSC_uniqueness}, which is due to \cite{DS_uniqueness}.

\begin{proof}[Proof of Theorem \ref{thm:OSC_uniqueness}]
	Suppose \(\omega_X'=\omega_X+i\d\db \varphi_1\) and let \(\varphi:[0,1]\to\K_E\) be the unique geodesic joining \(\varphi_0=0\) to \(\varphi_1\). Let \[\varphi_t=\psi_t+tf\] be the decomposition so that \(\dot{\psi}_t\in C^\infty(E_t)\) for all \(t\) and \(f\in C^\infty(B)\). Writing \(v:=\nabla^{1,0}_{\V}\dot{\psi}_0\), we have \[\psi_t=\int_0^t\exp(sv)^*\dot{\psi}_0\,ds,\] and \(\omega_{X,t}=\exp(sv)^*\omega_X\) where \(\omega_{X,t}:=\omega_X+i\d\db\psi_t\). Since \(\omega_X\) and \(\omega_{X,1}\) are optimal symplectic connections, \[\left.\frac{d}{dt}\right|_{t=0}\N(\psi_t)=\left.\frac{d}{dt}\right|_{t=1}\N(\psi_t)=0.\] By convexity along geodesics, this implies \(\N(\psi_t)=0\) for all \(t\). Hence \(\mathcal{R}_t\dot{\psi}_t=0\) for all \(t\), and in particular \(\mathcal{R}_0\dot{\psi}_0=0\) so that \(v\) is globally holomorphic. Finally, we get \(\omega_X'=\exp(v)^*\omega_X+i\d\db\pi^*f\).
\end{proof}

The following result generalises the boundedness result \ref{thm:DS_boundedness} of Dervan--Sektnan \cite{DS_uniqueness}.

\begin{theorem}[Boundedness]\label{thm:boundedness}
	If \(X\to B\) admits an optimal symplectic connection \(\omega_X'\), then the functional \(\mathcal{N}:\mathcal{K}_E\to\R\) is bounded (with respect to any choice of reference metric \(\omega_X\)).
\end{theorem}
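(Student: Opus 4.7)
The plan is to reduce the boundedness claim to the one-variable convexity inequality supplied by Theorem \ref{thm:fib_convexity}, using the fact that the optimal symplectic connection $\omega_X'$ is, by definition, a critical point of $\N$. Concretely, write $\omega_X'=\omega_X+i\d\db\psi$ for some $\psi\in\K_E$, so that $\N(\psi)$ is a fixed real number depending only on the reference metric and the OSC. The goal is to show $\N(\varphi)\ge\N(\psi)$ for every $\varphi\in\K_E$.

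The steps are as follows. First, given an arbitrary $\varphi\in\K_E$, invoke Theorem \ref{thm:geodesic_existence} to produce the unique smooth geodesic $\varphi:[0,1]\to\K_E$ with $\varphi_0=\psi$ and $\varphi_1=\varphi$. Next, evaluate the first derivative of $\N$ along this geodesic at $t=0$:
\[
\frac{d}{dt}\bigg|_{t=0}\N(\varphi_t)=-\int_X\dot{\varphi}_0\,p_0(\theta_0)\,\omega_0^m\wedge\omega_B^n.
\]
Because $\omega_0=\omega_X'$ is an optimal symplectic connection, the integrand vanishes identically ($p_0(\theta_0)=0$), so this derivative is zero. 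Finally, apply Theorem \ref{thm:fib_convexity} to the convex function $t\mapsto\N(\varphi_t)$: convexity together with vanishing of the derivative at the left endpoint gives $\N(\varphi_t)\ge\N(\varphi_0)$ for all $t\in[0,1]$, and in particular
\[
\N(\varphi)=\N(\varphi_1)\ge\N(\psi).
\]
This lower bound is uniform in $\varphi\in\K_E$, yielding the desired boundedness from below.

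There is essentially no technical obstacle here, since the two substantial ingredients, existence of the smooth geodesic joining $\psi$ to $\varphi$ and convexity of $\N$ along it, have both been established earlier in the paper. The only points requiring a brief remark are that the geodesic $\varphi_t$ actually lies in $\K_E$ (so the OSC operator and the first variation formula for $\N$ are defined at every time), and that the critical point equation for $\N$ reduces exactly to the optimal symplectic connection equation \eqref{eq:OSC} at $t=0$, which is precisely the statement that $\omega_X'$ is an OSC. The result should be understood as a direct generalisation of Theorem \ref{thm:DS_boundedness}: the isometry-invariance hypothesis of Dervan--Sektnan was only needed to reduce to a setting where a twisted version of the argument applied, whereas the geodesic method bypasses this restriction entirely and handles every $\varphi\in\K_E$ uniformly.
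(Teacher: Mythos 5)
Your proposal is correct and is essentially identical to the paper's own proof: both take the unique smooth geodesic from the optimal symplectic connection to an arbitrary \(\varphi\in\K_E\) (Theorem \ref{thm:geodesic_existence}), observe that the first variation of \(\N\) vanishes at \(t=0\) because \(p_0(\theta_0)=0\), and conclude \(\N(\varphi)\geq\N(\varphi_0)\) from convexity (Theorem \ref{thm:fib_convexity}) together with geodesic connectedness of \(\K_E\). Your closing remark that this bypasses the isometry-invariance hypothesis of Theorem \ref{thm:DS_boundedness} matches the paper's intent exactly.
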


\begin{proof}
	Let \(\varphi:[0,1]\to\K_E\) be a geodesic with \(\omega_X'=\omega_X+i\d\db\varphi_0\). Then \[\left.\frac{d}{dt}\right|_{t=0}\N(\varphi_t)=0,\] and by convexity of \(\N\) along \(\varphi\) we have \[\N(\varphi_1)\geq\mathcal{N}(\varphi_0).\] By geodesic connectedness of \(\K_E\), we have \(\mathcal{N}(\varphi)\geq\mathcal{N}(\varphi_0)\) for all \(\varphi\in\K_E\).
\end{proof}

\section{A Chen--Tian style formula}\label{sec:Chen--Tian}

Recall that for a compact K{\"a}hler manifold \((Y,\omega)\) of dimension \(d\), the Mabuchi functional is defined on the space of K{\"a}hler potentials \(\K\) with respect to \(\omega\), and its derivative along a smooth path \(\varphi_t\) in \(\K\) is \[\frac{d}{dt}\mathcal{M}(\varphi_t):=\int_Y\dot{\varphi}_t(\hat{S}-S(\omega_\varphi))\,\omega_\varphi^d.\] The Mabuchi functional has an explicit formula, which may be derived by taking a straight line path \(\varphi_t:=t\varphi\) from the origin to \(\varphi\): \begin{proposition}[Chen--Tian formula,  {\cite{Che00b,Tia00}}]\label{prop:Chen-Tian}
	For any \(\varphi\in\K\), \begin{align*}
	\mathcal{M}(\varphi)=\int_Y\log\left(\frac{\omega_\varphi^d}{\omega^d}\right)\omega_\varphi^d+&\sum_{j=0}^{d-1}\int_Y\varphi\,\Ric(\omega)\wedge\omega^j\wedge\omega_\varphi^{d-j}\\+&\frac{1}{d+1}\mu(Y)\sum_{j=0}^d\int_Y\varphi\,\omega^j\wedge\omega_\varphi^{d-j},
	\end{align*} where \[\mu(Y):=\frac{d\,c_1(Y)\cdot [\omega]^{d-1}}{[\omega]^d}.\]
\end{proposition}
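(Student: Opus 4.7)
The plan is to verify the identity by differentiating both sides along an arbitrary smooth path $\varphi_t : [0,1] \to \mathcal{K}$, checking the derivatives agree, and invoking the fact that $\mathcal{M}$ is uniquely determined by its variational formula \eqref{eq:Mabuchi} together with the normalisation $\mathcal{M}(0)=0$. Denote the right-hand side by $F(\varphi)$ and decompose it as $F = H + R + \hat{S}\cdot I$, where
\begin{align*}
H(\varphi) &:= \int_X \log\!\left(\frac{\omega_\varphi^n}{\omega^n}\right)\omega_\varphi^n, \qquad I(\varphi) := \frac{1}{n+1}\sum_{i=0}^n\int_X\varphi\,\omega^i\wedge\omega_\varphi^{n-i},
\end{align*}
and $R(\varphi)$ is the Ricci energy appearing as the middle sum (using $\hat{S} = n\mu(X)$). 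Each piece visibly vanishes at $\varphi = 0$, so $F(0)=0=\mathcal{M}(0)$, and the proof reduces to matching $\frac{d}{dt}F(\varphi_t)$ with $\int_X\dot{\varphi}_t(\hat{S}-S(\omega_{\varphi_t}))\,\omega_{\varphi_t}^n$.

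The three variations to compute are standard. Using $\frac{d}{dt}\omega_{\varphi_t}^n = n\,i\d\db\dot{\varphi}_t \wedge \omega_{\varphi_t}^{n-1}$, one differentiates each summand of $I$, integrates by parts against the closed forms $\omega$ and $\omega_{\varphi_t}$, and substitutes $i\d\db\varphi_t = \omega_{\varphi_t}-\omega$ to obtain a telescoping sum yielding $\frac{d}{dt}I(\varphi_t) = \int_X \dot{\varphi}_t\,\omega_{\varphi_t}^n$; this supplies the $\hat{S}$ contribution. For the entropy, integration by parts gives
\[
\frac{d}{dt}H(\varphi_t) = \int_X \dot{\varphi}_t\,\Delta_{\varphi_t}\log\!\left(\frac{\omega_{\varphi_t}^n}{\omega^n}\right)\,\omega_{\varphi_t}^n,
\]
and the identity $i\d\db \log(\omega_{\varphi_t}^n/\omega^n) = -2\pi\bigl(\Ric(\omega_{\varphi_t}) - \Ric(\omega)\bigr)$ rewrites the integrand as a combination of $S(\omega_{\varphi_t})$ and $\Lambda_{\omega_{\varphi_t}}\Ric(\omega)$, producing terms proportional to $\int_X\dot{\varphi}_t\,S(\omega_{\varphi_t})\,\omega_{\varphi_t}^n$ and $n\int_X\dot{\varphi}_t\,\Ric(\omega)\wedge\omega_{\varphi_t}^{n-1}$. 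For the Ricci energy, I would differentiate each summand, transfer the resulting $i\d\db\dot{\varphi}_t$ onto $\varphi_t$ by integration by parts (permissible because $\Ric(\omega)$, $\omega$, and $\omega_{\varphi_t}$ are all closed), and substitute $i\d\db\varphi_t = \omega_{\varphi_t}-\omega$. After collecting like terms, the sum telescopes and the surviving net contribution is a single multiple of $\int_X\dot{\varphi}_t\,\Ric(\omega)\wedge\omega_{\varphi_t}^{n-1}$.

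Adding the three pieces, the Ricci-type terms from $\frac{d}{dt}H(\varphi_t)$ and $\frac{d}{dt}R(\varphi_t)$ are engineered to cancel, leaving only $\int_X\dot{\varphi}_t(\hat{S}-S(\omega_{\varphi_t}))\,\omega_{\varphi_t}^n$; this matches \eqref{eq:Mabuchi}, so $F$ and $\mathcal{M}$ have the same variation along every smooth path and agree at $\varphi=0$, whence $F = \mathcal{M}$. The main obstacle is the combinatorial bookkeeping in the telescoping sum arising from $\frac{d}{dt}R(\varphi_t)$, together with verifying that the $2\pi$ and sign conventions flowing from $\Ric(\omega) = -(i/2\pi)\,\d\db\log\omega^n$ line up to precisely cancel the Ricci contribution from $\frac{d}{dt}H(\varphi_t)$; otherwise, the individual steps are routine first-variation calculations.
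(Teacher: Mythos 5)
Your overall strategy --- differentiate the candidate right-hand side along an arbitrary smooth path, match the result with the defining variation \eqref{eq:Mabuchi}, and invoke uniqueness of \(\M\) given the normalisation \(\M(0)=0\) --- is sound, and it is the verification-mode counterpart of what the paper gestures at (the paper gives no proof, citing Chen and Tian and remarking only that the formula may be derived by integrating \eqref{eq:Mabuchi} along the straight line \(t\mapsto t\varphi\); both routes rest on the same integration-by-parts and telescoping identities). Your individual variational identities are also correct as far as they go: \(\frac{d}{dt}I(\varphi_t)=\int_X\dot{\varphi}_t\,\omega_{\varphi_t}^n\), the entropy variation \(\frac{d}{dt}H(\varphi_t)=\int_X\dot{\varphi}_t\,\Delta_{\varphi_t}\log(\omega_{\varphi_t}^n/\omega^n)\,\omega_{\varphi_t}^n\), and the telescoping of the Ricci energy to a single multiple of \(\int_X\dot{\varphi}_t\,\Ric(\omega)\wedge\omega_{\varphi_t}^{n-1}\). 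The gap is that the one step you explicitly defer --- checking that the \(2\pi\) and sign conventions ``line up to precisely cancel'' --- is exactly where a literal verification fails, so your assertion that the Ricci-type terms ``are engineered to cancel'' is unproven and, for the proposition as printed, false. Note first a degree problem you should have caught when setting up \(R\): the middle sum as stated, \(\sum_{i=0}^{n-1}\int_X\varphi\,\Ric(\omega)\wedge\omega^i\wedge\omega_\varphi^{n-i}\), has integrand of bidegree \((n+1,n+1)\) on an \(n\)-dimensional manifold and is therefore identically zero; the exponent must be \(n-1-i\) for your telescoping computation even to parse. You silently differentiate the corrected Ricci energy while claiming to prove the stated identity.

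Second, with the corrected exponents the signs still do not cancel under the paper's conventions. Using \(\Ric(\omega)=-\frac{i}{2\pi}\d\db\log\omega^n\), your own identity \(i\d\db\log(\omega_{\varphi_t}^n/\omega^n)=-2\pi(\Ric(\omega_{\varphi_t})-\Ric(\omega))\) gives
\[\frac{d}{dt}H(\varphi_t)=-2\pi\int_X\dot{\varphi}_t\,S(\omega_{\varphi_t})\,\omega_{\varphi_t}^n+2\pi n\int_X\dot{\varphi}_t\,\Ric(\omega)\wedge\omega_{\varphi_t}^{n-1},\]
while the corrected middle sum, taken with the stated plus sign, contributes \(+n\int_X\dot{\varphi}_t\,\Ric(\omega)\wedge\omega_{\varphi_t}^{n-1}\): the two Ricci-type terms carry the \emph{same} sign, so they add rather than cancel, and the scalar-curvature term comes with a spurious factor of \(2\pi\). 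The variations match \eqref{eq:Mabuchi} only for
\[\frac{1}{2\pi}H(\varphi)\;-\;\sum_{i=0}^{n-1}\int_X\varphi\,\Ric(\omega)\wedge\omega^i\wedge\omega_\varphi^{n-1-i}\;+\;\hat{S}\,I(\varphi),\]
i.e.\ the Ricci energy must enter with a minus sign (as in Berman--Berndtsson and Boucksom--Hisamoto--Jonsson), and the entropy must be rescaled by \(1/2\pi\) --- equivalently one must use the convention \(\Ric(\omega)=-i\d\db\log\omega^n\) throughout. Carried out honestly, your plan proves the standard Chen--Tian formula and simultaneously reveals that the proposition as printed needs these corrections; as written, your proof stops short at its only non-routine point and asserts a cancellation that does not occur in the stated form.
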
 It is often useful to define the functionals appearing in the formula: \begin{align*}
H(\varphi)&:=\int_Y\log\left(\frac{\omega_\varphi^d}{\omega^d}\right)\omega_\varphi^d, \\
R(\varphi)&:=\sum_{j=0}^{d-1}\int_Y\varphi\,\Ric(\omega)\wedge\omega^j\wedge\omega_\varphi^{d-j}, \\
I(\varphi)&:=\frac{1}{d+1}\sum_{j=0}^d\int_Y\varphi\,\omega^j\wedge\omega_\varphi^{d-j}.
\end{align*} The Chen--Tian formula then states \[\mathcal{M}(\varphi)=H(\varphi)+R(\varphi)+\mu(Y)I(\varphi).\]

Now, as usual suppose we have a fibration \(X\to B\) where \(B\) has dimension \(n\) and the fibres \(X_b\) have dimension \(m\). We assume a relatively cscK metric \(\omega_X\) on \(X\) and fix a K{\"a}hler metric \(\omega_B\) on \(B\). The purpose of this section is to prove an analogue of the Chen--Tian formula for the functional \(\mathcal{N}\). This will be achieved through the following result:

\begin{proposition}[{\cite[Lemma 4.26]{DS_uniqueness}}]\label{prop:Mabuchi_expansion}
	For \(k\gg0\), write \(\omega_k:=\omega_X+k\omega_B\), and let \(\mathcal{M}_k\) be the corresponding Mabuchi functional restricted to \(\K_E\). Then there is an expansion \[\mathcal{M}_k=k^n\mathcal{F}+k^{n-1}\mathcal{N}+\O(k^{n-2}),\] where: \begin{enumerate}
	\item \(\mathcal{F}\) is the functional defined by \[\mathcal{F}(\varphi):=\int_B \mathcal{M}_{\V}(\varphi)\,\omega_B^n,\] where \(\mathcal{M}_{\V}\) is the fibrewise Mabuchi functional defined by \(\mathcal{M}_{\V}(\varphi)(b):=\mathcal{M}_{X_b}(\varphi|_{X_b})\), for \(\mathcal{M}_{X_b}\) the Mabuchi functional of the fibre \((X_b,\omega_b)\), and
	\item \(\mathcal{N}\) is the log-norm functional of Proposition \ref{prop:log-norm}.
	\end{enumerate} 
\end{proposition} The proof in \cite{DS_uniqueness} is strictly speaking for the twisted Mabuchi functional, however an entirely analogous proof yields the above result. We also remark that the functional \(\mathcal{F}\) is constant along \(\K_E\), since the Mabuchi functional itself is constant along cscK potentials.

Our goal is now to expand the usual Chen--Tian formula in \(k\) and identify the \(k^{n-1}\)-coefficient to produce a similar formula for the functional \(\mathcal{N}\). Recall the functionals \(H\), \(R\) and \(I\) from the Chen--Tian formula; we will denote by \(H_k\), \(R_k\) and \(I_k\) their counterparts for \(\omega_k\), and expand each of these separately. We remark that the slope \[\mu_k(X):=\frac{(m+n)c_1(X)\cdot[\omega_X+k\omega_B]^{m+n-1}}{[\omega_X+k\omega_B]^{m+n}}\] has an expansion \[\mu_k(X)=A_0(X)+A_1(X)k^{-1}+\O(k^{-2})\] which we will not write again explicitly, but is the same as \eqref{eq:slope_expansion}, with \([\omega_B]\) in place of \(L\) and \([\omega_X]\) in place of \(H\).

\begin{proposition}
	The following expansions hold: \begin{align*}
	I_k(\varphi)=&k^n\frac{1}{m+1}\binom{m+n}{n}\sum_{j=0}^m\int_X\varphi\,\omega_B^n\wedge\omega_X^j\wedge\omega_{X,\varphi}^{m-j} \\
	+& k^{n-1}\frac{1}{m+2}\binom{m+n}{n-1}\sum_{j=0}^{m+1}\int_X\varphi\,\omega_B^{n-1}\wedge\omega_X^j\wedge\omega_{X,\varphi}^{m+1-j} \\
	+&\O(k^{n-2}),
	\end{align*} \begin{align*}
	R_k(\varphi)=&k^n\binom{m+n}{n}\sum_{j=0}^{m-1}\int_X\varphi\,\rho\wedge\omega_B^n\wedge\omega_X^j\wedge\omega_{X,\varphi}^{m-1-j} \\
	+& k^{n-1}\binom{m+n}{n-1}\sum_{j=0}^m\int_X\varphi\,(\rho+\Ric(\omega_B))\wedge\omega_B^{n-1}\wedge\omega_X^{m-j}\wedge\omega_{X,\varphi}^j \\
	+& k^{n-1}\binom{m+n}{n}\sum_{j=0}^{m-1}\int_X\varphi\,i\d\db(\Lambda_{\omega_B}\omega_X)\wedge\omega_B^n\wedge\omega_X^{m-1-j}\wedge\omega_{X,\varphi}^j \\
	+&\O(k^{n-2}),
	\end{align*} 
	and \begin{align*}
	H_k(\varphi)=&k^n\binom{m+n}{n}\int_X\log\left(\frac{\omega_B^n\wedge\omega_{X,\varphi}^m}{\omega_B^n\wedge\omega_X^m}\right)\omega_B^n\wedge\omega_{X,\varphi}^m \\
	+&k^{n-1}\binom{m+n}{n-1}\int_X \log\left(\frac{\omega_B^n\wedge\omega_{X,\varphi}^m}{\omega_B^n\wedge\omega_X^m}\right)\omega_B^{n-1}\wedge\omega_{X,\varphi}^{m+1}  \\
	-&k^{n-1}\binom{m+n}{n}\sum_{j=0}^{m-1}\int_X\varphi\,i\d\db(\Lambda_{\omega_B}\omega_X)\wedge\omega_B^n\wedge\omega_X^{m-1-j}\wedge\omega_{X,\varphi}^j \\
	+&\O(k^{n-2}).
	\end{align*}
\end{proposition}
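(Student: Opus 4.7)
The proof is a systematic asymptotic expansion in \(k\to\infty\). The main tools are the binomial expansion
\[
(\omega_X+k\omega_B)^j=\sum_{l=0}^{\min(j,n)}\binom{j}{l}\,k^l\,\omega_B^l\wedge\omega_X^{j-l}
\]
together with the Chu--Vandermonde identity \(\sum_{i=0}^{N}\binom{i}{s}\binom{N-i}{r}=\binom{N+1}{s+r+1}\). Since \(\omega_B^{n+1}=0\), only contributions with total \(\omega_B\)-power equal to \(n\) or \(n-1\) produce the \(k^n\) and \(k^{n-1}\) terms. For every integrand appearing in \(I_k, R_k, H_k\), expanding \(\omega_k^j\wedge\omega_{k,\varphi}^{m+n-j}\) and distributing the \(k\omega_B\)-factors between the two products reduces the internal \(j\)-sum, by Vandermonde, to a single binomial \(\binom{m+n+1}{m+1}\) or \(\binom{m+n+1}{m+2}\); after absorbing the \(\tfrac{1}{m+n+1}\)-prefactor in \(I_k\) (and the analogous bookkeeping for \(R_k,H_k\)) these become the \(\binom{m+n}{n}\) and \(\binom{m+n}{n-1}\) appearing in the statement.

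For \(I_k\) the scheme applies directly. For \(R_k\) and \(H_k\) one first needs the asymptotics of \(\Ric(\omega_k)\) and of \(\log(\omega_{k,\varphi}^{m+n}/\omega_k^{m+n})\). The key pointwise identity, which follows from the splitting \(TX=\V\oplus\H\) and the definition of \(\Lambda_{\omega_B}\), is
\[
\omega_B^{n-1}\wedge\omega_X^{m+1}=\tfrac{m+1}{n}\,\Lambda_{\omega_B}\omega_X\cdot\omega_B^n\wedge\omega_X^m
\]
in top degree. It yields
\[
\omega_k^{m+n}=\binom{m+n}{n}k^n\,\omega_B^n\wedge\omega_X^m\bigl(1+k^{-1}\Lambda_{\omega_B}\omega_X+O(k^{-2})\bigr),
\]
and, combining this with \(-\tfrac{i}{2\pi}\d\db\log(\omega_B^n\wedge\omega_X^m)=\Ric(\omega_B)+\rho\), one obtains
\[
\Ric(\omega_k)=\Ric(\omega_B)+\rho+c\,k^{-1}i\d\db\Lambda_{\omega_B}\omega_X+O(k^{-2})
\]
for an explicit constant \(c\). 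Substituting into \(R_k\) and applying the binomial/Vandermonde scheme yields the three stated terms: at order \(k^n\) only \(\rho\) survives (as \(\Ric(\omega_B)\wedge\omega_B^n=0\)); at order \(k^{n-1}\) the \((\rho+\Ric(\omega_B))\)-term appears once \(\omega_B^n\) is relaxed to \(\omega_B^{n-1}\); and the \(i\d\db\Lambda_{\omega_B}\omega_X\)-term comes from pairing the \(k^{-1}\) correction to \(\Ric(\omega_k)\) with the leading \(k^n\) contribution of \(\omega_k^j\wedge\omega_{k,\varphi}^{m+n-1-j}\).

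For \(H_k\), the analogous expansion gives
\[
\log\frac{\omega_{k,\varphi}^{m+n}}{\omega_k^{m+n}}=\log\frac{\omega_B^n\wedge\omega_{X,\varphi}^m}{\omega_B^n\wedge\omega_X^m}+k^{-1}\Lambda_{\omega_B}(i\d\db\varphi)+O(k^{-2}).
\]
Multiplying by the expansion of \(\omega_{k,\varphi}^{m+n}\) and integrating, the \(k^0\) part of the logarithm paired with the two leading binomial contributions of \(\omega_{k,\varphi}^{m+n}\) yields the first two lines of the stated formula. The third line arises from the \(k^{-1}\) log-correction paired with the leading \(k^n\)-volume form, producing \(k^{n-1}\binom{m+n}{n}\int\Lambda_{\omega_B}(i\d\db\varphi)\,\omega_B^n\wedge\omega_{X,\varphi}^m\). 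To rewrite this in the target form one integrates by parts, using \(\int\Lambda_{\omega_B}\omega_X\cdot i\d\db\varphi\wedge\alpha=\int\varphi\cdot i\d\db\Lambda_{\omega_B}\omega_X\wedge\alpha\) for closed horizontal \(\alpha\), combined with the telescoping identity
\[
\omega_{X,\varphi}^m-\omega_X^m=i\d\db\varphi\wedge\sum_{i=0}^{m-1}\omega_X^{m-1-i}\wedge\omega_{X,\varphi}^i.
\]

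The main technical obstacle is precisely this last matching step for \(H_k\): converting the naive integral \(\int\Lambda_{\omega_B}(i\d\db\varphi)\,\omega_B^n\wedge\omega_{X,\varphi}^m\) into the telescoped target expression requires careful tracking of the horizontal/vertical decomposition of \(i\d\db\varphi\), since \(\Lambda_{\omega_B}\) selects only the horizontal component and the mixed \((\V,\H)\)-components of \(i\d\db\varphi\) contribute nontrivially after wedging with \(\omega_B^{n-1}\wedge\omega_{X,\varphi}^m\) in top degree. Everything else is routine binomial bookkeeping; as a consistency check, the \(i\d\db\Lambda_{\omega_B}\omega_X\)-terms appear with opposite signs in \(R_k\) and \(H_k\) and must cancel in \(\M_k=H_k+R_k+(m+n)\mu_k(X)I_k\), in agreement with Proposition \ref{prop:Mabuchi_expansion}.
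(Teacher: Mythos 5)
Your treatment of \(I_k\) and \(R_k\) is sound and amounts to the same bookkeeping as the paper: the paper organises the expansion via the hockey-stick identity \(\binom{n+1}{p+1}=\sum_{j\geq p}\binom{j}{p}\) where you use Chu--Vandermonde, and these are interchangeable; your derivation of the Ricci expansion from \(\omega_k^{m+n}=\binom{m+n}{n}k^n\,\omega_B^n\wedge\omega_X^m\bigl(1+k^{-1}\Lambda_{\omega_B}\omega_X+\O(k^{-2})\bigr)\) is exactly the content of the cited \([\)DS\_osc, Lemma 4.6\(]\). Your consistency check on the cancellation of the \(i\d\db(\Lambda_{\omega_B}\omega_X)\)-terms is also correct, and is remarked on in the paper.

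However, there is a genuine gap in your \(H_k\) argument, and it sits exactly where you flag your ``main technical obstacle''. The \(k^{-1}\)-coefficient of \(\log(\omega_{k,\varphi}^{m+n}/\omega_k^{m+n})\) is not \(\Lambda_{\omega_B}(i\d\db\varphi)\); it is the exact difference of nonlinear ratios
\[
\frac{n}{m+1}\left(\frac{\omega_B^{n-1}\wedge\omega_{X,\varphi}^{m+1}}{\omega_B^n\wedge\omega_{X,\varphi}^m}-\frac{\omega_B^{n-1}\wedge\omega_{X}^{m+1}}{\omega_B^n\wedge\omega_{X}^m}\right).
\]
Each ratio equals \(\Lambda_{\omega_B}\) of the \emph{horizontal component} of the respective metric, but taken with respect to \emph{two different} splittings of \(TX\) (the \(\omega_X\)- and \(\omega_{X,\varphi}\)-splittings), and the map \(\alpha\mapsto\omega_B^{n-1}\wedge\alpha^{m+1}/\omega_B^n\wedge\alpha^m\) is not linear in \(\alpha\); so your pointwise formula fails in general, and the subsequent integration by parts onto the telescoped expression is not justified. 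You correctly sense the difficulty but leave it unresolved, whereas this matching is precisely the nontrivial content of the \(H_k\)-expansion. The paper's proof avoids any decomposition of \(i\d\db\varphi\) altogether: after multiplying the difference of ratios by \(\omega_B^n\wedge\omega_{X,\varphi}^m\), the first term integrates to \(\int_X\omega_B^{n-1}\wedge\omega_{X,\varphi}^{m+1}\), which by the cohomological identity \([\omega_{X,\varphi}]=[\omega_X]\) may be replaced by \(\int_X\omega_B^{n-1}\wedge\omega_X^{m+1}\). This reduces the integrand to \(\bigl(1-\tfrac{\omega_B^n\wedge\omega_{X,\varphi}^m}{\omega_B^n\wedge\omega_X^m}\bigr)\omega_B^{n-1}\wedge\omega_X^{m+1}\), where one now factors out \(\Lambda_{\omega_B}\omega_X\) using the pointwise identity \(\omega_B^{n-1}\wedge\omega_X^{m+1}=\tfrac{m+1}{n}(\Lambda_{\omega_B}\omega_X)\,\omega_B^n\wedge\omega_X^m\) with respect to the \emph{fixed} metric only, telescopes \(\omega_B^n\wedge(\omega_X^m-\omega_{X,\varphi}^m)\), and integrates \(i\d\db\) by parts onto \(\Lambda_{\omega_B}\omega_X\). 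The essential move you are missing is this global cohomological replacement, which converts the problem into one involving only the reference splitting; without it (or an equivalent device) your derivation of the third line of the \(H_k\)-expansion does not go through.
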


\begin{proof}
	We will not provide the full proofs for all of these; instead we will content ourselves with calculating the leading order term of \(I_k\) to give an idea of how these calculations can be done for \(I_k\) and \(R_k\), and also calculate the expansion of \(H_k\) since the \(k^{n-1}\) term is rather difficult. For proofs of the remaining expansions, see the author's PhD thesis \cite[Chapter 4.3]{Hal22}.
	
	An important tool in the proofs is the combinatorial formula \begin{equation}\label{eq:comb}
	\binom{d+1}{p+1}=\binom{d}{p}+\binom{d-1}{p}+\cdots+\binom{p}{p}.
	\end{equation} This, for example, lets one write the functional \(I\) on a manifold \((Y,\omega)\) of dimension \(d\) instead as \[I(\varphi)=\frac{1}{d+1}\sum_{p=0}^d\binom{d+1}{p+1}\int_Y\varphi\,\omega^{d-p}\wedge(i\d\db\varphi)^p.\] This is achieved by expanding \(\omega_\varphi^j=(\omega+i\d\db\varphi)^j\), and reindexing the sum over powers of \(i\d\db\varphi\): \begin{align*}
	I(\varphi)&=\frac{1}{d+1}\sum_{j=0}^d\int_Y\varphi\,\omega_\varphi^j\wedge\omega^{d-j} \\
	&=\frac{1}{d+1}\sum_{j=0}^d\int_Y\varphi\sum_{\ell=0}^j\binom{j}{\ell}(i\d\db\varphi)^\ell\wedge\omega^{d-\ell} \\
	&= \frac{1}{d+1}\sum_{p=0}^d\left(\binom{d}{p}+\binom{d-1}{p}+\cdots+\binom{p}{p}\right)\int_Y\varphi(i\d\db\varphi)^p\wedge\omega^{d-p}\\
	&=\frac{1}{d+1}\sum_{p=0}^d\binom{d+1}{p+1}\int_Y\varphi\,\omega^{d-p}\wedge(i\d\db\varphi)^p.
	\end{align*} Conversely, if one obtains a formula of the form in the final line, then one can factorise it using exactly the same process.
	
	First, for the expansion of \(I_k\) we have \[I_k(\varphi)=\frac{1}{m+n+1}\sum_{p=0}^{m+n}\binom{m+n+1}{p+1}\int_X\varphi\,\omega_k^{m+n-p}\wedge(i\d\db\varphi)^{p}.\] Expanding this expression in \(k\), we find the leading coefficient of \(k^n\) is \[\frac{1}{m+n+1}\sum_{p=0}^{m}\binom{m+n+1}{p+1}\int_X\varphi\binom{m+n-p}{n}\omega_B^n\wedge\omega_X^{m-p}\wedge(i\d\db\varphi)^{p}.\] Now note that \begin{align*}
	\frac{1}{m+n+1}\binom{m+n+1}{p+1}\binom{m+n-p}{n} &= \frac{(m+n)!}{(p+1)!(m+n-p)!}\,\frac{(m+n-p)!}{n!(m-p)!} \\
	&=\frac{(m+n)!}{(m+1)!\,n!}\,\frac{(m+1)!}{(p+1)!(m-p)!} \\
	&= \frac{1}{m+1}\binom{m+n}{n}\binom{m+1}{p+1}.
	\end{align*} Hence the leading coefficient of \(I_k(\varphi)\) is \[\frac{1}{m+1}\binom{m+n}{n}\sum_{p=0}^m\binom{m+1}{p+1}\int_X\varphi\,\omega_B^n\wedge\omega_X^{m-p}\wedge(i\d\db\varphi)^p\] which by the combinatorial formula \eqref{eq:comb} can be factorised as \[\frac{1}{m+1}\binom{m+n}{n}\sum_{j=0}^m\int_X\varphi\,\omega_B^n\wedge\omega_X^j\wedge\omega_{X,\varphi}^{m-j}.\]
	
	The calculation for the subleading order term is similar. For the \(R\)-functional, one must also use the expansion of the Ricci curvature given by \[\Ric(\omega_k)=\rho+\Ric(\omega_B)+k^{-1}i\d\db(\Lambda_{\omega_B}\omega_X)+\O(k^{-2});\] see \cite[Lemma 4.6]{DS_osc}.
	
	Finally we expand the \(H\)-functional, \[H_k(\varphi)=\int_X\log\left(\frac{\omega_{k,\varphi}^{m+n}}{\omega_k^{m+n}}\right)\omega_{k,\varphi}^{m+n}.\] We first expand the logarithm. Towards this, we have the expansion \begin{align*}
	\omega_{k}^{m+n} &=k^n\binom{m+n}{n}\omega_B^n\wedge\omega_X^m+k^{n-1}\binom{m+n}{n-1}\omega_B^{n-1}\wedge\omega_X^{m+1}+\O(k^{n-2})\\
	&=k^n\binom{m+n}{n}\omega_B^n\wedge\omega_X^m\left(1+k^{-1}\frac{n}{m+1}\frac{\omega_B^{n-1}\wedge\omega_X^{m+1}}{\omega_B^n\wedge\omega_X^m}+\O(k^{-2})\right).
	\end{align*} Applying this same expansion to \(\omega_{k,\varphi}^{m+n}\) gives \begin{align*}
	\frac{\omega_{k,\varphi}^{m+n}}{\omega_{k}^{m+n}} &= \frac{\omega_B^n\wedge\omega_{X,\varphi}^m}{\omega_B^n\wedge\omega_X^m}\left(1+k^{-1}\frac{n}{m+1}\left(\frac{\omega_B^{n-1}\wedge\omega_{X,\varphi}^{m+1}}{\omega_B^n\wedge\omega_{X,\varphi}^m}-\frac{\omega_B^{n-1}\wedge\omega_X^{m+1}}{\omega_B^n\wedge\omega_X^m}\right)+\O(k^{-2})\right)
	\end{align*} where we have used \[\frac{1+k^{-1}a+\O(k^{-2})}{1+k^{-1}b+\O(k^{-2})}=1+k^{-1}(a-b)+\O(k^{-2})\] for real numbers \(a,b\). Applying the logarithm to \(\omega_{k,\varphi}^{m+n}/\omega_k^{m+n}\) then gives \[\log\left(\frac{\omega_B^n\wedge\omega_{X,\varphi}^m}{\omega_B^n\wedge\omega_X^m}\right)+\log\left(1+k^{-1}\frac{n}{m+1}\left(\frac{\omega_B^{n-1}\wedge\omega_{X,\varphi}^{m+1}}{\omega_B^n\wedge\omega_{X,\varphi}^m}-\frac{\omega_B^{n-1}\wedge\omega_{X}^{m+1}}{\omega_B^n\wedge\omega_{X}^m}\right)+\O(k^{-2})\right).\] Using the power series formula \[\log(1+x)=\sum_{i=1}^\infty(-1)^{i+1}\frac{x^i}{i}\] we get \[\log\left(\frac{\omega_{k,\varphi}^{m+n}}{\omega_k^{m+n}}\right)=\log\left(\frac{\omega_B^n\wedge\omega_{X,\varphi}^m}{\omega_B^n\wedge\omega_X^m}\right)+k^{-1}\frac{n}{m+1}\left(\frac{\omega_B^{n-1}\wedge\omega_{X,\varphi}^{m+1}}{\omega_B^n\wedge\omega_{X,\varphi}^m}-\frac{\omega_B^{n-1}\wedge\omega_{X}^{m+1}}{\omega_B^n\wedge\omega_{X}^m}\right)+\O(k^{-2}).\] The \(k^n\)-coefficient of \(H_k(\varphi)\) is therefore \[\binom{m+n}{n}\int_X\log\left(\frac{\omega_B^n\wedge\omega_{X,\varphi}^m}{\omega_B^n\wedge\omega_X^m}\right)\omega_B^n\wedge\omega_{X,\varphi}^m.\] The \(k^{n-1}\)-coefficient is \begin{align}\label{eq:dummy1}
	&\nonumber\binom{m+n}{n-1}\int_X\left(\frac{\omega_B^{n-1}\wedge\omega_{X,\varphi}^{m+1}}{\omega_B^n\wedge\omega_{X,\varphi}^m}-\frac{\omega_B^{n-1}\wedge\omega_{X}^{m+1}}{\omega_B^n\wedge\omega_{X}^m}\right)\omega_B^n\wedge\omega_{X,\varphi}^m \\
	&+\binom{m+n}{n-1}\int_X \log\left(\frac{\omega_B^n\wedge\omega_{X,\varphi}^m}{\omega_B^n\wedge\omega_X^m}\right)\omega_B^{n-1}\wedge\omega_{X,\varphi}^{m+1}.
	\end{align} We can reduce the first term of \eqref{eq:dummy1} as follows: \begin{align*}
	&\int_X\left(\frac{\omega_B^{n-1}\wedge\omega_{X,\varphi}^{m+1}}{\omega_B^n\wedge\omega_{X,\varphi}^m}-\frac{\omega_B^{n-1}\wedge\omega_{X}^{m+1}}{\omega_B^n\wedge\omega_{X}^m}\right)\omega_B^n\wedge\omega_{X,\varphi}^m \\
	=& \int_X\left(\omega_B^{n-1}\wedge\omega_{X,\varphi}^{m+1}-\frac{\omega_B^n\wedge\omega_{X,\varphi}^m}{\omega_B^n\wedge\omega_X^m}\,\omega_B^{n-1}\wedge\omega_X^{m+1}\right)  \\
	=&
	\int_X\left(1-\frac{\omega_B^n\wedge\omega_{X,\varphi}^m}{\omega_B^n\wedge\omega_X^m}\right)\omega_B^{n-1}\wedge\omega_X^{m+1} \\
	=&
	\int_X\frac{\omega_B^n\wedge\omega_X^m-\omega_B^n\wedge\omega_{X,\varphi}^m}{\omega_B^n\wedge\omega_X^m}\,\omega_B^{n-1}\wedge\omega_X^{m+1} \\
	=&
		\int_X\frac{(m+1)\omega_B^{n-1}\wedge\omega_{X,\mathcal{H}}\wedge\omega_{X,\mathcal{V}}^m}{\omega_B^n\wedge\omega_{X,\V}^m}(\omega_B^n\wedge\omega_X^m-\omega_B^n\wedge\omega_{X,\varphi}^m) \\
	=&
	\frac{m+1}{n}\int_X(\Lambda_{\omega_B}\omega_X)(\omega_B^n\wedge\omega_X^m-\omega_B^n\wedge\omega_{X,\varphi}^m) \\
	=&
	-\frac{m+1}{n}\sum_{p=1}^m\binom{m}{p}\int_X(\Lambda_{\omega_B}\omega_X)\omega_B^n\wedge\omega_X^{m-p}\wedge(i\d\db\varphi)^p \\
	=&
	-\frac{m+1}{n}\sum_{p=1}^m\binom{m}{p}\int_X\varphi\,i\d\db(\Lambda_{\omega_B}\omega_X)\wedge\omega_B^n\wedge\omega_X^{m-p}\wedge(i\d\db\varphi)^{p-1} \\
	=&
	-\frac{m+1}{n}\sum_{p=0}^{m-1}\binom{m}{p+1}\int_X\varphi\,i\d\db(\Lambda_{\omega_B}\omega_X)\wedge\omega_B^n\wedge\omega_X^{m-1-p}\wedge(i\d\db\varphi)^p \\
	=&
	-\frac{m+1}{n}\sum_{j=0}^{m-1}\int_X\varphi\,i\d\db(\Lambda_{\omega_B}\omega_X)\wedge\omega_B^n\wedge\omega_X^{m-1-j}\wedge\omega_{X,\varphi}^j,
	\end{align*} where we used \[\omega_B^{n-1}\wedge\omega_X^{m+1}=\omega_B^{n-1}\wedge(\omega_{X,\mathcal{H}}+\omega_{X,\mathcal{V}})^{m+1}=(m+1)\omega_B^{n-1}\wedge\omega_{X,\mathcal{H}}\wedge\omega_{X,\V}^m\] in the fifth line, and in the final line we have again used the identity \eqref{eq:comb}.
\end{proof}

Combining all of these expansions together with Proposition \ref{prop:Mabuchi_expansion}, we get the following. Note there is a miraculous cancellation that occurs between the \(k^{n-1}\)-coefficients of \(H_k\) and \(R_k\), following from the lengthy calculation at the end of the last proof.

\begin{theorem}\label{prop:Chen-Tian_fib}
	The functional \(\N\) satisfies the formula \begin{align*}
	\N(\varphi)  = \binom{m+n}{n-1}&\left[\, \int_X\log\left(\frac{\omega_B^n\wedge\omega_{X,\varphi}^m}{\omega_B^n\wedge\omega_X^m}\right)\,\omega_B^{n-1}\wedge\omega_{X,\varphi}^{m+1} \right. \\
	&+\sum_{j=0}^m\int_X\varphi\,(\rho+\Ric(\omega_B))\wedge\omega_B^{n-1}\wedge\omega_X^{m-j}\wedge\omega_{X,\varphi}^j \\
	&+\frac{1}{m+2}A_0(X)\sum_{j=0}^{m+1}\int_X\varphi\,\omega_B^{n-1}\wedge\omega_X^j\wedge\omega_{X,\varphi}^{m+1-j}\\
	&\left.+\frac{1}{n}A_1(X)\sum_{j=0}^m\int_X\varphi\,\omega_B^n\wedge\omega_X^j\wedge\omega_{X,\varphi}^{m-j}\,\right],
	\end{align*} where \(\rho\) is the curvature of the metric on \(\Lambda^m\V\) determined by \(\omega_X\), and \(A_0(X)\) and \(A_1(X)\) are the topological constants appearing in the expansion \[\mu_k(X)=A_0(X)+k^{-1}A_1(X)+\O(k^{-2}).\]
\end{theorem}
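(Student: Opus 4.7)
The plan is to combine the asymptotic expansion \(\mathcal{M}_k = k^n\mathcal{F} + k^{n-1}\mathcal{N} + O(k^{n-2})\) from Proposition \ref{prop:Mabuchi_expansion} with the Chen--Tian formula (Proposition \ref{prop:Chen-Tian}) applied to the K\"ahler form \(\omega_k=\omega_X+k\omega_B\) on the total space \(X\), which has complex dimension \(m+n\). Substituting the expansions of \(H_k\), \(R_k\), \(I_k\) computed in the preceding proposition, and the expansion of the slope \(\mu_k(X)\), into the identity
\[
\mathcal{M}_k(\varphi) \;=\; H_k(\varphi) + R_k(\varphi) + (m+n)\,\mu_k(X)\,I_k(\varphi),
\]
my task reduces to identifying the coefficient of \(k^{n-1}\) on the right-hand side and matching it with \(\mathcal{N}(\varphi)\).

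The first step is to note the consistency check at leading order \(k^n\): all four leading terms must assemble into the constant functional \(\mathcal{F}\). This is a formal matter; one needs only verify that the \(k^n\)-coefficients from \(H_k\), \(R_k\) and from \((m+n)\cdot\frac{m}{m+n}A_0(X)\cdot I_k\) combine into the usual fibrewise Mabuchi functional (which is constant in \(\varphi\in\K_E\) because each fibre carries a cscK metric). Once this is checked, I can forget the \(k^n\) part and focus exclusively on the \(k^{n-1}\) coefficient.

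At order \(k^{n-1}\), the contributions come from three sources: the \(k^{n-1}\) parts of \(H_k\), \(R_k\), \(I_k\), together with the \(k^{-1}\) part of \(\mu_k(X)\) paired with the \(k^n\) part of \(I_k\). The key observation, and the one piece of real content in the proof, is that the term
\[
-\binom{m+n}{n}\sum_{i=0}^{m-1}\int_X\varphi\,i\partial\bar{\partial}(\Lambda_{\omega_B}\omega_X)\wedge\omega_B^n\wedge\omega_X^{m-1-i}\wedge\omega_{X,\varphi}^i
\]
appearing in the expansion of \(H_k\) is exactly cancelled by the corresponding term in the expansion of \(R_k\) arising from the subleading term \(k^{-1}i\partial\bar{\partial}(\Lambda_{\omega_B}\omega_X)\) in \(\mathrm{Ric}(\omega_k)\). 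This cancellation is already built into the expansion formulae, so the proof just involves invoking it. The remaining logarithmic term from \(H_k\) and the remaining Ricci-curvature term from \(R_k\) are precisely the first two lines of the claimed formula.

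Finally, for the \(I_k\) contributions, I would use the binomial identity \(\frac{1}{m+1}\binom{m+n}{n} = \frac{1}{n}\binom{m+n}{n-1}\). Multiplying the leading coefficient of \(I_k\) by \((m+n)\cdot\frac{n}{m+n}A_1(X)k^{-1} = nA_1(X)k^{-1}\) produces precisely the \(A_1(X)\)-term in the claimed formula, while multiplying the subleading coefficient of \(I_k\) by \((m+n)\cdot\frac{m}{m+n}A_0(X) = mA_0(X)\) produces the \(\frac{m}{m+2}A_0(X)\)-term. Since the expansion of \(\mathcal{M}_k\) is unique and forced to equal \(k^{n-1}\mathcal{N}+O(k^{n-2})\) at this order, matching coefficients yields the formula. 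No step is genuinely hard; the only risk is bookkeeping, particularly keeping track of the binomial factors and confirming the cancellation of the \(i\partial\bar{\partial}(\Lambda_{\omega_B}\omega_X)\) terms between \(H_k\) and \(R_k\).
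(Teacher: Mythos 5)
Your proposal is correct and follows essentially the same route as the paper's own proof: both substitute the expansions of \(H_k\), \(R_k\), \(I_k\) and of the slope \(\mu_k(X)\) into the Chen--Tian formula for \(\omega_k=\omega_X+k\omega_B\), invoke the cancellation of the \(i\d\db(\Lambda_{\omega_B}\omega_X)\) terms between the \(k^{n-1}\)-coefficients of \(H_k\) and \(R_k\) (the ``miraculous cancellation'' the paper highlights), and extract the \(k^{n-1}\)-coefficient by uniqueness of the expansion in Proposition \ref{prop:Mabuchi_expansion}. Your bookkeeping, including the identity \(\tfrac{1}{m+1}\binom{m+n}{n}=\tfrac{1}{n}\binom{m+n}{n-1}\) and the pairing of the \(k^{-1}\)-term of \((m+n)\mu_k(X)\) with the leading term of \(I_k\), checks out.
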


As with the standard Chen--Tian formula, it will be useful to name the functionals appearing in the formula for later use: \begin{align}
\widetilde{H}(\varphi):=&\int_X\log\left(\frac{\omega_B^n\wedge\omega_{X,\varphi}^m}{\omega_B^n\wedge\omega_X^m}\right)\,\omega_B^{n-1}\wedge\omega_{X,\varphi}^{m+1} \nonumber\\
\widetilde{R}(\varphi):=&\sum_{j=0}^m\int_X\varphi\,(\rho+\Ric(\omega_B))\wedge\omega_B^{n-1}\wedge\omega_X^{m-j}\wedge\omega_{X,\varphi}^j \nonumber\\
\widetilde{I}(\varphi):=&\sum_{j=0}^m\int_X\varphi\,\omega_B^n\wedge\omega_X^j\wedge\omega_{X,\varphi}^{m-j} \nonumber \\
\widetilde{J}(\varphi):=&\sum_{j=0}^{m+1}\int_X\varphi\,\omega_B^{n-1}\wedge\omega_X^j\wedge\omega_{X,\varphi}^{m+1-j} \label{eq:fibration_functionals}
\end{align} The formula then takes the more notationally compact form: \[\N(\varphi)=\binom{m+n}{n-1}\left(\widetilde{H}(\varphi)+\widetilde{R}(\varphi)+\frac{1}{n}A_1(X)\widetilde{I}(\varphi)+\frac{1}{m+2}A_0(X)\widetilde{J}(\varphi)\right).\]

\section{Geodesics associated to fibration degenerations and polystability}\label{polystability}

In this section, we associate to certain fibration degenerations \((\X,\H)\to(B,L)\) whose fibres over \(B\) are all product test configurations, a unique geodesic ray in \(\K_E\). Calculating the limiting slope of the functional \(\N\) along this ray, we recover the invariant \(W_1(\X,\H)\) determining stability of the fibration. Applying convexity of the functional \(\N\) along geodesics, this yields polystability of fibrations admitting optimal symplectic connections, with respect to fibration degenerations that are locally products over \(B\times\C\). For isotrivial fibrations, meaning those whose fibres are all isomorphic as complex manifolds, this implies polystability with respect to fibration degenerations that are smooth over \(B\times\C\). In the case of varieties, the ideas used here go back to \cite{Tia97}, and have been carried out by various authors in \cite{PS10}, \cite{PRS08}, \cite{BHJ19}, with extensions to the non-algebraic K{\"a}hler case in \cite{DR17} and \cite{Sjo18,Sjo20}.

These results will apply under the following assumptions: we consider a smooth projective morphism \(\pi:X\to B\) of non-singular projective varieties, together with a choice of ample line bundle \(L\to B\) and relatively ample line bundle \(H\to X\). We further assume that \(\dim H^0(X_b,TX_b)\) is independent of \(b\), which by the partial Cartan decomposition is equivalent to assuming that \(\dim E_b\) is independent of \(b\), and assume that each fibre \(X_b\) admits a cscK metric in the class \(c_1(H_b)\). Even though we have chosen to work in the algebraic setting with line bundles, the results will carry over easily to the transcendental K{\"a}hler setting, as we comment on in Remark \ref{rem:Kahler_case}.

\subsection{Deligne Pairings}

Towards calculating the limiting slope of the functional \(\N\), we first review the theory of Deligne pairings; see \cite[Section 1.2]{BHJ19} for further information. Let \(Y\) and \(T\) be smooth varieties, and let \(\pi:Y\to T\) be a flat projective morphism of relative dimension \(m\). Given line bundles \(L_1,\ldots,L_{m+1}\) on \(Y\), integration over the fibres gives a well-defined cohomology class \[\int_{Y/T}c_1(L_1)\cdots c_1(L_{m+1})\] on the base \(T\). The theory of Deligne pairings gives a canonical choice of line bundle \(\langle L_1,\ldots,L_{m+1}\rangle_{Y/T}\) whose first Chern class is this cohomology class. Moreover, if \(h_1,\ldots,h_{m+1}\) are smooth hermitian metrics on \(L_1,\ldots,L_{m+1}\) respectively, then the theory also produces a canonical \emph{continuous} hermitian metric \(\langle h_1,\ldots,h_{m+1}\rangle_{Y/T}\) on \(\langle L_1,\ldots,L_{m+1}\rangle_{Y/T}\) that is smooth over the smooth locus of \(\pi\). The Deligne pairings are symmetric in the factors, multilinear, and functorial. Most importantly, they satisfy: \begin{enumerate}
	\item The curvature of \(\langle h_1,\ldots,h_{m+1}\rangle_{Y/T}\) is given by \[\int_{Y/T}\omega_1\wedge\cdots\wedge\omega_{m+1},\] where \(\omega_j\) is the curvature of \(h_j\).
	\item If \(h_1'\) is another smooth hermitian metric on \(L_1\), then \begin{equation}\label{eq:change_of_metric}
	\log\left(\frac{\langle h_1',h_2,\ldots,h_{m+1}\rangle_{Y/T}}{\langle h_1,h_2,\ldots,h_{m+1}\rangle_{Y/T}}\right)=\int_{Y/T}\phi\,\omega_2\wedge\cdots\wedge\omega_{m+1},
	\end{equation} where \(\phi:=\log(h_1'/h_1)\).
\end{enumerate}

To simplify notation, we shall abusively write \begin{equation}\label{eq:abuse}
\langle h_1',h_2,\ldots,h_{m+1}\rangle_{Y/T}-\langle h_1,h_2,\ldots,h_{m+1}\rangle_{Y/T}
\end{equation} for what is really \[\log\left(\frac{\langle h_1',h_2,\ldots,h_{m+1}\rangle_{Y/T}}{\langle h_1,h_2,\ldots,h_{m+1}\rangle_{Y/T}}\right).\] This is in line with using additive notation for line bundles and metrics rather than multiplicative notation. Furthermore, when \(T\) is a point we shall simply write \(\langle\, ,\rangle_Y\) in place of \(\langle \, ,\rangle_{Y/\pt}\).

Using the change of metric formula \eqref{eq:change_of_metric}, we can rewrite the functionals appearing in the Chen--Tian style formula \ref{prop:Chen-Tian_fib} in terms of Deligne pairings. Let \(\pi:X\to B\) be a smooth projective morphism of projective varieties, let \(L\to B\) be ample and \(H\to X\) be relatively ample, let \(h_X\) be a smooth metric on \(H\) with curvature a relatively K{\"a}hler form \(\omega_X\), and let \(h_B\) be a smooth metric on \(L\) with curvature a K{\"a}hler form \(\omega_B\). Denote by \(\eta_X\) the metric on \(-K_X\) corresponding to the volume form \(\omega_X^m\wedge\omega_B^n\). Lastly, write \(f=\Lambda_{\omega_B}\omega_X\) which we consider as a hermitian metric on the trivial line bundle on \(X\) with curvature \(i\d\db(\Lambda_{\omega_B}\omega_X)\).

\begin{proposition}\label{prop:Deligne_pairings}
	Let \(h_{X,\varphi}\) be another smooth metric on \(H\) with relatively K{\"a}hler curvature \(\omega_{X,\varphi}\), where \(\varphi=\log(h_{X,\varphi}/h)\), and let \(\eta_{X,\varphi}\) be the metric on \(-K_X\) corresponding to the volume form \(\omega_{X,\varphi}^m\wedge\omega_B^n\). Then: \begin{enumerate}[label=(\roman*)]
		\item \(\widetilde{H}(\varphi)
		= \langle \eta_{X,\varphi},h_B^{n-1},h_{X,\varphi}^{m+1}\rangle_{X}-\langle \eta_X,h_B^{n-1},h_{X,\varphi}^{m+1}\rangle_{X}\),
		\item \(\widetilde{R}(\varphi) = \langle \eta_X,h_B^{n-1},h_{X,\varphi}^{m+1}\rangle_{X}-\langle \eta_X,h_B^{n-1},h_{X}^{m+1}\rangle_{X}\),
		\item \(\widetilde{I}(\varphi) =
		\langle h_B^{n},h_{X,\varphi}^{m+1}\rangle_{X}-\langle h_B^{n},h_{X}^{m+1}\rangle_{X}\),
		\item \(\widetilde{J}(\varphi) =
		\langle h_B^{n-1},h_{X,\varphi}^{m+2}\rangle_{X}-\langle h_B^{n-1},h_{X}^{m+2}\rangle_{X}\),
	\end{enumerate} where the functionals \(\widetilde{H},\widetilde{R},\widetilde{I},\widetilde{J}\) are defined in \eqref{eq:fibration_functionals}, and we abuse notation as in \eqref{eq:abuse}.
\end{proposition}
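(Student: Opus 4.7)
The plan is to deduce all four identities from the change of metric formula \eqref{eq:change_of_metric}, together with the symmetry and multilinearity of Deligne pairings. In each case, I would write the right-hand side as a telescoping difference of Deligne pairings in which consecutive terms differ in exactly one slot, and then invoke \eqref{eq:change_of_metric} on each elementary step.

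For parts (iii) and (iv), only the slots \(h_X\) versus \(h_{X,\varphi}\) play a role. I would telescope
\[
\langle h_B^n, h_{X,\varphi}^{m+1}\rangle_X - \langle h_B^n, h_X^{m+1}\rangle_X = \sum_{j=0}^{m} \Bigl( \langle h_B^n, h_{X,\varphi}^{j+1}, h_X^{m-j}\rangle_X - \langle h_B^n, h_{X,\varphi}^j, h_X^{m+1-j}\rangle_X \Bigr),
\]
and apply \eqref{eq:change_of_metric} to each summand, noting \(\log(h_{X,\varphi}/h_X)=\varphi\) and that the curvatures of the remaining slots are \(\omega_B\), \(\omega_{X,\varphi}\) and \(\omega_X\). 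After a reindexing \(j\mapsto m-j\) this reproduces \(\widetilde{I}(\varphi)\), and the same argument with \(h_B^{n-1}\) and one extra \(h_X/h_{X,\varphi}\) slot yields \(\widetilde{J}(\varphi)\).

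For part (ii), I would run the identical telescoping but now with \(\eta_X\) and \(h_B^{n-1}\) held fixed while the \(m+1\) copies of \(h_X\) are converted one at a time to \(h_{X,\varphi}\). Each elementary step produces an integral of \(\varphi\) against \(\mathrm{curv}(\eta_X) \wedge \omega_B^{n-1}\) wedged with an appropriate power of \(\omega_X\) and \(\omega_{X,\varphi}\). The crucial input is the identification \(\mathrm{curv}(\eta_X) = \rho + \mathrm{Ric}(\omega_B)\): the short exact sequence \(0 \to \V \to TX \to \pi^*TB \to 0\) gives a smooth isomorphism \(-K_X \cong \Lambda^m \V \otimes \pi^*(-K_B)\), and under this isomorphism the volume form \(\omega_X^m\wedge\omega_B^n\) decomposes as the fibrewise volume metric on \(\Lambda^m \V\) (whose curvature is \(\rho\) by definition) tensored with the pullback of the metric on \(-K_B\) induced by \(\omega_B^n\) (whose curvature is \(\mathrm{Ric}(\omega_B)\)). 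Summing the telescoping contributions recovers \(\widetilde{R}(\varphi)\).

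For part (i), only one slot distinguishes the two pairings, so a single application of \eqref{eq:change_of_metric} in the \(\eta\)-slot produces
\[
\int_X \log\bigl(\eta_{X,\varphi}/\eta_X\bigr)\,\omega_B^{n-1}\wedge\omega_{X,\varphi}^{m+1}.
\]
Since metrics on \(-K_X\) are in bijection with volume forms on \(X\), and this bijection takes ratios to ratios, the log-integrand equals \(\log\bigl((\omega_{X,\varphi}^m\wedge\omega_B^n)/(\omega_X^m\wedge\omega_B^n)\bigr)\), matching \(\widetilde{H}(\varphi)\) exactly. The main (minor) obstacle is securing the curvature identification and the ratio identity for \(\eta_X, \eta_{X,\varphi}\); once these conventions are pinned down, everything reduces to a mechanical application of \eqref{eq:change_of_metric}.
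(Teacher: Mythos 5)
Your proof is correct and is essentially the paper's own argument: the paper disposes of all four identities as immediate from the change of metric formula \eqref{eq:change_of_metric}, noting only the ratio identity \(\log(\eta_{X,\varphi}/\eta_X)=\log\bigl((\omega_{X,\varphi}^m\wedge\omega_B^n)/(\omega_X^m\wedge\omega_B^n)\bigr)\) needed for (i), and your telescoping through one-slot changes is exactly what that appeal amounts to once symmetry and multilinearity of the pairing are invoked. Your identification \(\mathrm{curv}(\eta_X)=\rho+\Ric(\omega_B)\) via the splitting \(-K_X\cong\Lambda^m\V\otimes\pi^*(-K_B)\) is also correct and simply fills in a step the paper leaves implicit in (ii).
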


\begin{proof}
	This is immediate from the change of metric formula \eqref{eq:change_of_metric} for Deligne pairings; we only note that \[\log(\eta_{X,\varphi}/\eta_X)=\log\left(\frac{\omega_{X,\varphi}^m\wedge\omega_B^n}{\omega_X^m\wedge\omega_B^n}\right),\] which is the integrand appearing in the functional \(\widetilde{H}\).
\end{proof}

\subsection{Geodesics associated to fibration degenerations and the Wess--Zumino--Witten equation}

Now, let us turn to fibration degenerations. First, notice that any fibration degeneration with \(W_0(\X,\H)>0\) may be ignored, as these by definition do not destabilise the fibration. We may therefore assume \(W_0(\X,\H)=0\), which by Proposition \ref{prop:flat_locus} means that the general fibre \((\X_b,\H_b)\) normalises to a product test configuration for \((X_b,H_b)\). If we normalise \((\X,\H)\), then the general fibre \((\X_b,\H_b)\) of \((\X,\H)\) over \(B\) will also be normal, and hence will be a genuine product test configuration for \((X_b,H_b)\) by K-polystability of the fibres.

It follows that it suffices to consider fibration degenerations whose general fibre is a genuine product test configuration. In what follows, we will make a slightly more restrictive assumption that implies all fibres are product test configurations: \begin{definition}
	A fibration degeneration \((\X,\H)\) of \((X,H)\) is called \emph{product-type} if for every \(b\in B\) there is an open neighbourhood \(U\) of \(b\) and a \(\C^*\)-equivariant holomorphic isomorphism \[(\X|_{U\times\C},\H|_{U\times\C})\cong (X|_U\times\C,p^*H|_U),\] where \(p:X|_U\times\C\to X|_U\) is the projection, and the \(\C^*\)-action on the right-hand side is the standard multiplication on \(\C\), but may be non-trivial on \((X|_U,H|_U)\). 
\end{definition}

Restricting attention to product-type fibration degenerations is analogous to considering sub-vector bundles instead of subsheaves when studying stability of vector bundles; see Examples \ref{ex:projectivised_vector_bundle} and \ref{ex:principal_bundle_fibration}. Thus, when the base \(B\) of the fibration is a curve, we expect such fibration degenerations to play an important role, perhaps determining stability completely.

A product-type fibration degenerations has all fibres \(\X_b\) product test configurations. When the fibration is isotrivial, the converse holds: \begin{lemma}\label{lem:a}
	Suppose that all fibres \(X_b\) are isomorphic as complex manifolds, and let \(\X\) be a fibration degeneration of \(X\). If all fibres \(\X_b\) are product test configurations, then \(\X\) is product-type.
\end{lemma}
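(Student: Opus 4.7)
The plan is to show that, locally over $B$, the $\mathbb{C}^*$-action on $\X$ can be untwisted so as to identify $\X$ with a product-type fibration degeneration.

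The key input is local triviality of $\X \to B\times\mathbb{C}$. Since each $\X_b$ is a product test configuration, its central fibre is again $X_b$, so every fibre of $\X\to B\times\mathbb{C}$ is isomorphic to the common fibre $F$ of $\pi$. The morphism $\X\to B\times\mathbb{C}$ is proper with smooth fibres of constant dimension, so by the Fischer--Grauert theorem it is analytically locally trivial as a holomorphic fibre bundle with fibre $F$. For any $b_0\in B$, I would choose a $\mathbb{C}^*$-invariant analytic neighbourhood of the form $U\times\mathbb{C}$ on which $\X|_{U\times\mathbb{C}} \cong F\times U\times\mathbb{C}$ as holomorphic bundles.

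Using this local trivialisation I would then extract a one-parameter subgroup $\sigma$ of $\Aut(\pi|_{X|_U})$ by restricting the $\mathbb{C}^*$-action to the central fibre $\X|_{U\times\{0\}}$ and transporting it to $X|_U$ via the canonical identification $\X|_{U\times\{0\}} \cong X|_U$ coming from the trivialisation. By construction the fibrewise restriction $\sigma|_{X_b}$ is precisely the one-parameter subgroup $\rho_b$ giving the product test configuration structure on $\X_b$, because the product structures $\X_b \cong X_b\times\mathbb{C}$ are just the fibrewise restrictions of the local trivialisation of $\X|_{U\times\mathbb{C}}$. Setting $\mathcal{Y} := X|_U\times\mathbb{C}$ with $\mathbb{C}^*$-action $\mu\cdot(x,\tau) = (\sigma(\mu,x),\mu\tau)$ and polarisation $p^*H|_U$ with its induced linearisation produces a product-type fibration degeneration of $(X|_U,H|_U)$.

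Finally, I would construct a $\mathbb{C}^*$-equivariant polarised isomorphism $\Phi:\mathcal{Y}\to\X|_{U\times\mathbb{C}}$ by the formula
\[\Phi(x,\tau) := \tau\cdot_{\X}\sigma(\tau^{-1}, x)\]
on $U\times\mathbb{C}^*$, where $x\in X|_U = \X|_{U\times\{1\}}$ and $\cdot_{\X}$ is the given action on $\X$. A direct calculation shows $\Phi$ is $\mathbb{C}^*$-equivariant, and in the local trivialisation of $\X|_{U\times\mathbb{C}}$ the map $\Phi$ becomes literally the identity $F\times U\times\mathbb{C}^*\to F\times U\times\mathbb{C}^*$. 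Hence it extends across $\tau = 0$ to the identity biholomorphism and intertwines the line bundles. The main obstacle is the appeal to Fischer--Grauert, for which one must verify that $\X\to B\times\mathbb{C}$ is smooth in the required sense; once this is in hand the rest of the proof is essentially explicit, with the identity-on-fibres form of $\Phi$ in local coordinates supplying the global extension for free.
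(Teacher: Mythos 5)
Your overall route---Fischer--Grauert plus untwisting by the \(\C^*\)-action---is the same as the paper's, but two steps in your execution have genuine gaps. The first concerns the extension of \(\Phi\) across \(\tau=0\), which is the actual content of the lemma. Your argument rests on the claim that the product structures \(\X_b\cong X_b\times\C\) ``are just the fibrewise restrictions of the local trivialisation of \(\X|_{U\times\C}\)''. There is no reason for this: Fischer--Grauert produces \emph{some} holomorphic trivialisation, a priori only over a neighbourhood \(U\times D_\epsilon\) of \((b,0)\) rather than over all of \(U\times\C\) (spreading it out already uses the action, which the paper signals with ``applying \(\C^*\)-equivariance''), and it carries no compatibility with the \(\C^*\)-action, whereas the product structure on \(\X_b\) is by definition an \emph{equivariant} trivialisation. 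For a single product test configuration in its equivariant trivialisation your map \(\Phi(x,\tau)=\tau\cdot_\X\sigma(\tau^{-1},x)\) is indeed the identity; but in an arbitrary trivialisation the action reads \(\mu\cdot(f,b,\tau)=(a_\mu(f,b,\tau),b,\mu\tau)\) with \(a_\mu\) genuinely depending on the fibre coordinate \(\tau\), and then \(\Phi\) in coordinates becomes \(a_\tau\bigl(a_{\tau^{-1}}(f,b,0),b,1\bigr)\), which the cocycle identity does \emph{not} collapse to \(f\) (it would if the last slot were \(0\) rather than \(1\)). So ``\(\Phi\) is literally the identity, hence extends'' is unjustified, and with it your mechanism for extending across the central fibre; one needs either an equivariant refinement of the trivialisation or a separate argument that the twisting can be removed.

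The second gap is the polarisation. The paper treats the isomorphism \(\H|_{U\times\C}\cong p^*H|_U\) as a separate, nontrivial step, citing \cite[Lemma 5.10]{New78} (a see-saw--type statement that a line bundle appropriately trivial on the fibres of a projection is a pullback, extended to the holomorphic setting). You assert it comes for free (``with its induced linearisation'', ``intertwines the line bundles''), but a biholomorphism of total spaces does not by itself carry \(\H\) to \(p^*H|_U\); moreover, transporting the central-fibre \(\C^*\)-action to a linearisation of \(p^*H|_U\) already presupposes an identification \(\H|_{U\times\{0\}}\cong H^r|_U\), so your construction is circular at this point. Via the action one only gets \(\H\cong p^*H^r\) equivariantly over \(U\times\C^*\); extending this over \(\tau=0\) (up to a character of \(\C^*\)) is exactly what the cited lemma supplies. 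Your closing caveat about verifying smoothness of \(\X\to B\times\C\) before invoking Fischer--Grauert is a fair observation---in the paper the lemma is applied in tandem with Lemma \ref{lem:b}, where smoothness over \(B\times\C\) is a hypothesis---but it is the two gaps above, not this, that break the proof as written.
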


\begin{proof}
	This is a consequence of the Fischer--Grauert theorem \cite{FG65}, which states that if \(Y\to T\) is a proper holomorphic surjective submersion with all fibres \(Y_t\) isomorphic as complex manifolds, then the fibration is holomorphically locally trivial. Applying this to \(\X\) in a neighbourhood of \((b,0)\in U\times\C\) and applying \(\C^*\)-equivariance gives that \(\X|_U\cong X|_U\times\C\). 
	
	It remains to show the isomorphism of line bundles \(\H_{U\times\C}\cong p^*H|_U\). This kind of result is well established, see for example \cite[Lemma 5.10]{New78} which applies in the algebraic case but extends equally well to the holomorphic setting.
\end{proof}

We also have the following, which tells us that for isotrivial fibrations, our result applies to all fibration degenerations that are smooth over \(B\times\C\).

\begin{lemma}\label{lem:b}
	Let \((\X,\H)\) be a fibration degeneration of \((X,H)\) that is smooth over \(B\times\C\), with \(W_0(\X,\H)=0\). Then all fibres \((\X_b,\H_b)\) are product test configurations.
\end{lemma}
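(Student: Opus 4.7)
The plan is to prove the statement by first showing that $\DF(\X_b,\H_b)$ is in fact independent of $b\in B$, as a direct consequence of smoothness, and then combining this with Proposition \ref{prop:flat_locus}, the hypothesis $W_0(\X,\H)=0$, and K-polystability of each fibre $(X_b,H_b)$ to conclude directly. This sidesteps any continuity or extension argument on $B$.

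The key claim is that under the smoothness hypothesis, $\DF(\X_b,\H_b)$ is independent of $b$. Smoothness of $\X\to B\times\C$ implies flatness, so the restriction $\X_0:=\X|_{B\times\{0\}}\to B$ is flat; hence the Hilbert polynomial $k\mapsto\dim H^0(\X_{b,0},\H_{b,0}^k)$ is independent of $b$, and in particular the coefficients $a_0,a_1$ in its asymptotic expansion are constant in $b$. Similarly, $\C^*$-equivariance of $\H$ makes the pushforward of $\H_0^k$ from $\X_0$ to $B$ a $\C^*$-equivariant vector bundle on $B$ for $k\gg0$; such a bundle decomposes globally into weight subbundles of locally constant rank, and hence the weight sum $w_k(b)=\tr A_k$ on $H^0(\X_{b,0},\H_{b,0}^k)$ is also independent of $b$, as are its expansion coefficients $b_0,b_1$. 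Combining, $\DF(\X_b,\H_b)=(a_1b_0-a_0b_1)/a_0^2$ is constant in $b$.

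Now by Proposition \ref{prop:flat_locus}, for general $b\in B$ one has $W_0(\X,\H)=\binom{m+n}{n}L^n\cdot\DF(\X_b,\H_b)$; combined with $W_0(\X,\H)=0$ and the constancy of $\DF(\X_b,\H_b)$ in $b$, we conclude $\DF(\X_b,\H_b)=0$ for \emph{every} $b\in B$. Smoothness of $\X\to B\times\C$ also gives by base change that each $\X_b\to\C$ is smooth, so in particular $\X_b$ is a normal test configuration for $(X_b,H_b^r)$. Since $(X_b,H_b)$ admits a cscK metric it is K-polystable, and K-polystability combined with vanishing of the Donaldson--Futaki invariant and normality of $\X_b$ forces $\X_b$ to be a product test configuration for every $b$.

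The main technical point is establishing constancy of the weight sum $w_k(b)$, which rests on the global decomposition of a $\C^*$-equivariant vector bundle on the (connected) base $B$ into weight subbundles of locally constant, hence constant, rank together with the integrality of weights. Once that is in hand, the remainder is a direct application of Proposition \ref{prop:flat_locus}, the definition of K-polystability, and the preservation of smoothness (and thus normality) under base change to $\{b\}\times\C$.
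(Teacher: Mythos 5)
Your proposal is correct and follows essentially the same route as the paper: deduce normality of every fibre \(\X_b\) from smoothness, use Proposition \ref{prop:flat_locus} and \(W_0(\X,\H)=0\) to get \(\DF(\X_b,\H_b)=0\) generically, propagate this to all \(b\) by constancy of the Donaldson--Futaki invariant in flat \(\C^*\)-equivariant families, and conclude by K-polystability of the fibres. The only difference is that where the paper cites constancy of \(\DF\) in flat families as a known fact, you prove it directly via constancy of the Hilbert polynomial and the decomposition of the equivariant pushforward into weight subbundles of constant rank, which is a correct and standard justification.
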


Here smoothness is in the sense of scheme theory, so that \(\X\to B\times\C\) is a holomorphic submersion.

\begin{proof}
	Since the map \(\X\to B\times\C\) is smooth, all fibres \(\X_b\) will be normal varieties. Since \(W_0(\X,\H)=0\), by Proposition \ref{prop:flat_locus} the general fibre \((\X_b,\H_b)\) satisfies \(\DF(\X_b,\H_b)=0\). Now, since the morphism \(\X\to B\times\C\) is smooth it is in particular flat. The Donaldson--Futaki invariant is constant in flat families, and therefore every fibre satisfies \(\DF(\X_b,\H_b)=0\). Since every fibre is also normal, it follows that all fibres \((\X_b,\H_b)\) are product test configurations, by K-polystability of the fibres \((X_b,H_b)\).
\end{proof}

Our aim is now to associate to any product-type fibration degeneration a unique geodesic ray \(\varphi:[0,\infty)\to\K_E\) that is compatible with this fibration degeneration in a suitable sense. Firstly, let \(D\subset\C\) be the closed unit disc, and write \(D^\times:=D\backslash\{0\}\). Given a fibration degeneration \((\X,\H)\), we will denote by \(\X|_{D}\) the restriction of \(\X\) to \(D\subset\C\), and similarly for \(\H|_D\). Using the \(\C^*\)-action on \(\X\), we consider \((X\times D^\times,p^*H)\subset(\X|_{D},\H|_D)\) as an open subset, again writing \(p:X\times D^\times\to X\) for the projection.

\begin{definition}
	Let \((\X,\H)\) be a product-type fibration degeneration of \((X,H)\), and let \(h_X\) be a fixed hermitian metric on \(H\) with curvature \(\omega_X\) a relatively cscK metric on \(X\). A smooth geodesic ray \(\varphi:[0,\infty)\to\K_E\) is said to be \emph{compatible} with \((\X,\H)\) if there exists a smooth \(S^1\)-invariant hermitian metric \(h_\X\) on \(\H|_D\) such that the function \(\Phi:X\times D^\times\to\R\) defined by \[h_{\X}|_{D^\times}=e^\Phi p^*h_X\] satisfies: \begin{enumerate}[label=(\roman*)]
		\item \(\Phi|_{\d\X}=0\), and 
		\item \(\Phi(x,\tau):=\varphi_{-\log|\tau|}(x)\) for all \(\tau\in D^\times\).
	\end{enumerate}
\end{definition}

\begin{remark}In the non-relative case, there is already a suitable notion of a geodesic compatible with a given test configuration, see for example \cite{PS10}. Since such a geodesic solves a degenerate Monge-Amp{\`e}re equation with boundary condition, the geodesic is necessarily unique. For arbitrary test configurations, one must first resolve singularities on the total space, and even then the geodesic is not guaranteed to be smooth but only \(C^{1,1}\) in general. When the test configuration is a product however, we shall see that the unique associated geodesic ray is smooth. Moreover if the initial condition \(h_X\) is a cscK metric, then the geodesic ray will also be through cscK metrics. 
\end{remark}

We wish to associate a unique smooth geodesic ray to any product-type fibration degeneration. As for the geodesic equation in Section \ref{sec:geodesic_equation}, the notion of geodesic ray compatible with a fibration degeneration is simply the usual notion of compatibility with test configurations but fibrewise over \(B\). So we will again construct the geodesics fibrewise and show they vary smoothly with the base point.

First, the fibrewise geodesics must be shown to exist, be smooth, and be through
cscK potentials. This is the content of the following proposition, which is a basic
consequence of results by Donaldson (\cite[pp. 100-101]{Don92}, \cite[Propositions 3, 4]{Don99}) as explained below.

\begin{proposition}
	Let \((X\times\C,p^*L)_\rho\) be a product test configuration for \((X,L)\), and let \(\omega\) be a cscK metric in \(c_1(L)\). Then there exists a unique geodesic ray compatible with the product test configuration having initial condition \(0\in\mathcal{K}_\omega\).
\end{proposition}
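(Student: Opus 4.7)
The plan is to construct the compatible geodesic ray explicitly from the generating one-parameter subgroup $\rho$, to build a matching metric $h_\X$ from the lift of $\rho$, and to deduce uniqueness from the ODE nature of the smooth geodesic equation.

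For existence, I would extract from $\rho$ a real holomorphy potential. The subgroup $\rho: \C^* \to \Aut(X,L)$ together with its lift $\tilde\rho: \C^* \to \Aut(L)$ produces a holomorphic vector field in $\mathfrak{h}$. After invoking the Matsushima--Lichnerowicz theorem — replacing $\omega$ by an $\Aut_0(X)$-pullback if needed to arrange $\rho(S^1) \subset \Isom_0(\omega)$ — there is a unique real $u \in E$ with $v := \nabla^{1,0}u$ generating the $\R^+$-flow $\rho(e^t)$. Proposition \ref{prop:holomorphy_geodesics} then yields a smooth geodesic ray $\phi_t := \int_0^t \exp(sv)^*u \, ds$ with $\phi_0 = 0$, which as a bonus passes through the cscK metrics $\omega_t = \exp(tv)^*\omega$.

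Next I would verify compatibility by constructing $h_\X$ directly from $\tilde\rho$. On $\X = X \times \C$ with the diagonal $\C^*$-action, define $h_\X$ to be the unique smooth $\C^*$-equivariant hermitian metric on $\H = p^*L$ whose restriction to the fibre $\X_1 \cong X$ equals $h_X$. Since the $\C^*$-action on $\X$ is algebraic, $h_\X$ is smooth on all of $\X|_D$, including the central fibre. Under the equivariant trivialization $\X|_{\C^*} \cong X \times \C^*$ that converts the action into the trivial one on the $X$-factor, a direct computation using $\exp(tv)^* h_X = e^{\phi_t} h_X$ identifies $h_\X|_{D^\times}$ with $e^\Phi p^*h_X$ for $\Phi(x,\tau) = \phi_{-\log|\tau|}(x)$; the boundary condition $\Phi|_{\d\X} = 0$ follows from $\phi_0 = 0$, and $S^1$-invariance is manifest. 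For uniqueness, any smooth geodesic ray in $\K$ is determined by the initial data $(\phi_0, \dot\phi_0)$ via the second-order ODE $\ddot\phi_t = |\d\dot\phi_t|^2_t$. Here $\phi_0 = 0$ is given, and compatibility with $(\X,\H)$ forces $\dot\phi_0 = u$: smoothness of $h_\X$ at the central fibre, combined with $h_\X|_{D^\times} = e^\Phi p^*h_X$, pins down the first-order asymptotics of $\Phi$ near $\tau = 1$, which by $\Phi(x,\tau) = \phi_{-\log|\tau|}(x)$ determines $\dot\phi_0$ uniquely as the holomorphy potential of the generator of $\rho$.

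The main obstacle, and the reason the product structure is essential, will be the smoothness of $h_\X$ across the central fibre $\tau = 0$. In the trivialization used above, the potential $\Phi(x,\tau) = \phi_{-\log|\tau|}(x)$ typically grows linearly in $-\log|\tau|$, so $e^\Phi p^*h_X$ blows up as $\tau \to 0$; smoothness must instead be realized on the original total space $\X$ via the nontrivial algebraic $\C^*$-action, as in Donaldson's Wess--Zumino--Witten reduction \cite{Don99}. It is precisely the algebraicity of the $\C^*$-action that allows the singularities of $e^\Phi$ and of the trivialization to cancel, and for general (non-product) test configurations this cancellation fails, yielding only a $C^{1,1}$-geodesic rather than a smooth one.
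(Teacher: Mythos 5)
Your existence argument proves the proposition only in the special case \(\rho(S^1)\subset\Isom_0(\omega)\), and the device you use to reduce to that case --- ``replacing \(\omega\) by an \(\Aut_0(X)\)-pullback if needed'' --- is not available: the proposition fixes \(\omega\) and requires the ray to start at \(0\in\mathcal{K}_\omega\), and pulling back by \(g\in\Aut_0(X)\) produces a ray compatible with the test configuration only after re-marking \(\X_1\cong X\) by \(g\), i.e.\ a ray with the wrong initial condition \(g^*\omega\). Moreover, for non-invariant \(\omega\) the compatible ray is genuinely \emph{not} generated by the holomorphy potential of the generator of \(\rho\): already in the finite-dimensional model of hermitian metrics on \(\C^2\), with \(\rho(\tau)=\mathrm{diag}(\tau,1)\) and \(H_0\) not diagonal, the path \(t\mapsto\rho(e^{-t})^*H_0\) is not even a geodesic; the compatible geodesic is the flow of a \emph{unipotently corrected} generator \(v'=qvq^{-1}\), with \(q\) in the unipotent radical of the parabolic attached to \(\rho\), chosen so that the corrected circle preserves \(\omega\) --- and one must then also verify that the corrected one-parameter subgroup yields a test configuration isomorphic to the given one by an isomorphism restricting to the \emph{identity} on \(\X_1\). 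None of this appears in your argument. The paper avoids it entirely: it poses the Dirichlet problem for Donaldson's WZW equation for maps \(f:D\to G/K\) with boundary values the (in general non-constant) loop \(f(e^{i\theta})=K\cdot\rho(e^{i\theta})\), and quotes the existence, uniqueness and smoothness of solutions from \cite{Don92}; your invariant case is exactly the constant-boundary case, where the solution is the constant map and unwinds to the ray of Proposition \ref{prop:holomorphy_geodesics}.

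Two further steps fail as written. First, the ``unique smooth \(\C^*\)-equivariant hermitian metric \(h_\X\) restricting to \(h_X\) on \(\X_1\)'' does not exist as a smooth metric across the central fibre: full \(\C^*\)-invariance forces \(h_\X|_{\X_\tau}=\tilde{\rho}(\tau)_*h_X\), which degenerates as \(\tau\to0\) whenever \(\rho\) is nontrivial, and in the equivariant trivialisation its potential is \(\Phi\equiv0\) --- it corresponds to the \emph{constant} ray, not your geodesic. (Your own closing paragraph contradicts the smoothness claim of your second paragraph.) The correct object in the invariant case is the metric pulled back under the product projection \(X\times D\to X\) in the \emph{untrivialised} coordinates of \(\X=X\times\C\): it is smooth for trivial reasons, is \(S^1\)-invariant precisely because \(\rho(S^1)\subset\Isom_0(\omega)\) (with a normalised lift), and its trivialised potential is \(\phi_{-\log|\tau|}\); algebraicity of the action plays no role. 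Second, your uniqueness argument runs through the initial value problem: but compatibility does not pin down \(\dot{\varphi}_0\) --- smoothness of \(h_\X\) at the central fibre is a condition at \(\tau=0\) and says nothing about first-order data at \(\tau=1\), and for non-invariant \(\omega\) the correct initial velocity is the potential of \(v'\), not of the generator of \(\rho\) --- and in any case the Cauchy problem for the homogeneous Monge--Amp\`ere geodesic equation is ill-posed, so ODE-style uniqueness from \((\varphi_0,\dot{\varphi}_0)\) cannot simply be invoked. The uniqueness in the paper is boundary-value uniqueness: the WZW Dirichlet problem on the disc with prescribed boundary loop has exactly one smooth solution, which simultaneously delivers existence, uniqueness, smoothness, and the fact that the ray stays in the cscK locus, uniformly in \(\omega\).
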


\begin{proof}
This essentially follows from Donaldson's reduction of the geodesic equation on the disc to a so-called ``Wess--Zumino--Witten" (WZW) equation. Denote by \(D\) the disc and let \(G/K\) be the symmetric space \(\Aut_{\red}(X)/(\Isom_0(\omega)\cap\Aut_{\red}(X))\), where \(\omega\) is cscK. Then \(G/K\) is a non-positively curved symmetric space, embedded in the space \(\H\) of K{\"a}hler potentials as a totally geodesic submanifold. We wish to construct a smooth map \(f:D\to G/K\) satisfying the geodesic equation together with the boundary condition given by \(f(e^{i\theta})=K\cdot\rho(e^{i\theta})\), where \(\rho:\C^*\to\Aut(X,L)\) is the action determining the product test configuration. 

By \cite[Propositions 3, 4]{Don99}, \(f:D\to G/K\) defines a geodesic if and only if \(f\) solves the WZW equation \begin{equation}\label{eq:WZW}
\tilde{\tau}(f):=\tau(f)+[D_uf,D_vf]=0.
\end{equation} Here \(\tau(f):=\tr(\nabla df)\) is the usual tension field appearing in the harmonic map equation, and \([\,,]:T(G/K)\otimes T(G/K)\to T(G/K)\) is the Lie bracket given by the canonical isomorphism of the tangent space with the Lie algebra of \(K\). Also, \(u+iv\) is the standard complex coordinate on \(D\subset\C\), and \(D_uf:=df(\d_u)\), \(D_vf:=df(\d_v)\). As is proven in \cite[pp. 100-101]{Don92} (see also the comments in Section 7 of \cite{Don99}), the WZW equation has a unique smooth solution for any fixed boundary condition. It follows that the geodesic equation on a product test configuration is uniquely soluble, and the solution is smooth and lies within the space of cscK metrics.
\end{proof}

Hence, on a product-type fibration degeneration, each fibre \((\X_b,\H_b)\) has a unique geodesic ray \(\varphi_b:[0,\infty)\to\K_{E,b}\) associated to it. It once again falls to proving these geodesic rays depend smoothly on the base point \(b\). This will be done by linearising the WZW equation at a solution, and showing the derivative is an isomorphism. This has been carried out by Goldstein \cite{Gol72} for the standard harmonic map equation; adapting his proof, we show:

\begin{proposition}\label{thm:WZW_linearisation}
	For \(r\geq2\) and \(\alpha\in[0,1]\), let \[F:C^{r,\alpha}(D,G/K)\to C^{r-2,\alpha}(D,T(G/K))\times C^{r,\alpha}(\d D,G/K)\] be given by \[F(f)=(\tilde{\tau}(f),f|_{\d D}),\] where \(\tilde{\tau}(f):=\tr(\nabla df)+[D_uf,D_vf]\) is the WZW operator \eqref{eq:WZW}. If \(f:D\to G/K\) is a solution to the WZW equation, then \(dF|_f\) is an isomorphism.
\end{proposition}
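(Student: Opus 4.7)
The plan is to adapt Goldstein's argument for the standard harmonic map Dirichlet problem, treating the additional bracket in the WZW operator as a lower-order perturbation that preserves both ellipticity and the key energy inequality available on non-positively curved symmetric targets.

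First, I would compute the linearization explicitly. A tangent vector to $C^{r,\alpha}(D, G/K)$ at $f$ is a section $V$ of $f^{\ast}T(G/K)$, and differentiating $\tilde{\tau}$ along a one-parameter family of maps yields
\[
dF|_f(V) \;=\; \bigl(J(V) + \mathcal{B}_f(V),\; V|_{\partial D}\bigr),
\]
where $J(V) := \mathrm{tr}_g \nabla^2 V - \mathrm{tr}_g R^{G/K}(V, df)df$ is the standard Jacobi operator of the harmonic map equation, and $\mathcal{B}_f(V) := [D_u V, D_v f] + [D_u f, D_v V]$ is the linearization of the bracket term. The principal symbol of $dF|_f$ is that of the scalar Laplacian tensored with the identity on $f^{\ast}T(G/K)$, so the linearization is uniformly elliptic; combined with the Dirichlet boundary condition, standard Schauder theory on the disc makes $dF|_f$ Fredholm of index zero between the stated H\"{o}lder spaces.

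To conclude, I would show that $\ker dF|_f = 0$. If $V$ satisfies $J(V) + \mathcal{B}_f(V) = 0$ with $V|_{\partial D} = 0$, pair with $V$ in $L^2$ and integrate by parts:
\[
\int_D |\nabla V|^2 \, dA \;-\; \int_D \langle R^{G/K}(V, df)df,\, V\rangle \, dA \;+\; \int_D \langle \mathcal{B}_f(V),\, V\rangle \, dA \;=\; 0.
\]
The first term is non-negative and the second is non-negative because $G/K$ is non-positively curved. For the bracket term, one exploits the symmetric-space identity $R^{G/K}(X,Y)Z = -[[X,Y], Z]$ together with the $\mathrm{ad}$-invariance of the inner product, which makes $\langle [X,Y], Z\rangle$ totally antisymmetric. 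Integrating by parts and using antisymmetry rewrites the bracket contribution as a boundary integral, which vanishes by the Dirichlet condition, plus a non-negative quadratic form in $V$ and $df$. Summing the three non-negative contributions forces $\nabla V \equiv 0$, and the boundary condition then gives $V \equiv 0$. Combined with index zero, this yields the isomorphism.

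The main obstacle is handling the bracket term in the energy estimate: a priori $\langle \mathcal{B}_f(V), V\rangle$ has no definite sign, so one cannot simply invoke non-positive curvature. Instead one must use the specific Lie-algebraic structure of the symmetric space $G/K$, in particular the compatibility between the bracket on $T(G/K)$ and the Riemannian curvature, to recast the bracket contribution into a manifestly positive form. This is the step that genuinely uses the symmetric-space structure, going beyond merely the Riemannian geometry of $G/K$.
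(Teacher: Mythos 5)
Your linearisation and the Fredholm/index-zero framing are fine and essentially match the paper, which also gets surjectivity from Goldstein's index theory (the leading term agrees with the harmonic map operator). The genuine gap is in your treatment of the bracket term in the kernel argument. After the \(L^2\) pairing, total antisymmetry of \(\langle[\cdot,\cdot],\cdot\rangle\) gives
\[
\int_D\langle \mathcal{B}_f(V),V\rangle
=\int_D\bigl(\langle[D_vf,V],D_uV\rangle-\langle[D_uf,V],D_vV\rangle\bigr),
\]
which is an indefinite cross term, \emph{not} a boundary integral plus a non-negative quadratic form. Indeed, integration by parts here is vacuous: since \(\langle[X,V],V\rangle\equiv0\) by ad-invariance, moving the derivative off \(D_uV\) produces \(-\langle[D_uD_vf,V],V\rangle-\langle[D_vf,D_uV],V\rangle\), whose first term vanishes identically and whose second term equals \(+\langle[D_vf,V],D_uV\rangle\) again — the integral reproduces itself and no sign information is gained. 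Consequently your concluding step, ``summing the three non-negative contributions forces \(\nabla V\equiv0\),'' does not go through: the three contributions are not separately non-negative, and in fact the correct argument never yields \(\nabla V\equiv0\) directly.

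The mechanism in the paper (following Goldstein) is instead to complete the square \emph{across} all three pieces, using the exact symmetric-space normalisation \(R(U,V)W=-\tfrac14[W,[U,V]]\) (your ``\(R(X,Y)Z=-[[X,Y],Z]\)'' is missing the factor \(\tfrac14\), and with the wrong constant the square does not close). One obtains
\[
0=\left\|D_uV-\tfrac12[D_vf,V]\right\|^2+\left\|D_vV+\tfrac12[D_uf,V]\right\|^2,
\]
whence the first-order system \(D_uV=\tfrac12[D_vf,V]\), \(D_vV=-\tfrac12[D_uf,V]\). From this, \(\partial_u(V,V)=2(D_uV,V)=([D_vf,V],V)=0\) by ad-invariance, and similarly in \(v\), so \(|V|^2\) is constant; the Dirichlet condition then forces \(V\equiv0\). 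So your skeleton (index zero plus trivial kernel) is correct, but the kernel step needs this square completion and pointwise-constancy argument in place of the claimed term-by-term positivity.
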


\begin{proof}
	Following \cite{Gol72}, we show the derivative is both injective and surjective at a solution. In fact, surjectivity follows from abstract index theory, given that the leading order term in the linearisation of the WZW equation agrees with that of the harmonic map equation; see \cite[pp. 353--355]{Gol72}. Hence we only need to show that the derivative is injective at a solution.
	
	So, suppose that \(W\in T_fC^{r,\alpha}(D,G/K)=C^{r,\alpha}(D,f^*T(G/K))\) lies in the kernel of \(dF|_f\). Then \(W|_{\d D}=0\), and \(W\) arises from a variation \(f_t\) of \(f\) with \(f_0=f\) and \(f_t|_{\d D}=f|_{\d D}\) for all \(t\). Let \(u+iv\) be the standard complex coordinate on \(\C\) restricted to the closed unit disc \(D\subset\C\). In these coordinates, the Wess--Zumino--Witten equation takes the form \[\tilde{\tau}(f)=D_uD_uf+D_vD_vf+[D_uf,D_vf]=0,\] where \(D_uf:=f_*(\d_u)\), \(D_vf:=f_*(\d_v)\), and for a vector field \(V\) along \(f\), \(D_uV\) (resp. \(D_vV\)) is the covariant derivative of \(V\) along \(f_*(\d_u)\) (resp. \(f_*(\d_v)\)). In \cite[``Lemma"]{Gol72}, Goldstein shows that that \(W\) lying in the kernel of \(dF|_f\) is equivalent to the condition\footnote{It is here that one uses the condition that \(f\) solves the WZW equation.} \[0=D_t(\tau(f_t)+[D_uf_t,D_vf_t])|_{t=0}.\] Omitting the evaluation at \(t=0\), the WZW equation then differentiates to give \begin{align*}
	0=&D_tD_uD_uf+D_tD_vD_vf+D_t[D_uf,D_vf] \\
	=&D_uD_tD_uf+R(D_tf,D_uf)D_uf \\
	+&D_vD_tD_vf+R(D_tf,D_vf)D_vf+D_t[D_uf,D_vf],
	\end{align*} where \(R\) is the curvature tensor of \(G/K\). Next, we use that \([\,,]\) is covariantly constant on \(G/K\), so that \(D_t[D_uf,D_vf]=[D_tD_uf,D_vf]+[D_uf,D_tD_vf]\). Also, the connection on \(G/K\) is torsion-free, so we have \(D_tD_uf=D_uD_tf\). Lastly, the curvature operator \(R\) on \(G/K\) takes the explicit form \(R(U,V)Z=-\frac{1}{4}[Z,[U,V]]\). Putting this all together, we get \begin{align*}
	0=& D_uD_uW+D_vD_vW-\frac{1}{4}[D_uf,[W,D_uf]]-\frac{1}{4}[D_vf,[W,D_vf]] \\
	+&[D_uW,D_vf]+[D_uf,D_vW].
	\end{align*} Taking the \(L^2\)-inner product with \(-W\), \begin{align*}
	0=&-\langle W,D_uD_uW\rangle-\langle W,D_vD_vW\rangle\\
	&+\frac{1}{4}\langle W,[D_uf,[W,D_uf]]\rangle+\frac{1}{4}\langle W,[D_vf,[W,D_vf]]\rangle \\
	&-\langle W,[D_uW,D_vf]\rangle-\langle W,[D_uf,D_vW]\rangle.
	\end{align*} Since \(W|_{\d D}=0\), we can integrate \(D_u\) and \(D_v\) by parts without picking up boundary terms, hence \(-\langle W,D_uD_uW\rangle=\langle D_uW,D_uW\rangle\) and similarly for \(v\). Furthermore, the inner product on \(G/K\) is ad-invariant, so that \(\langle [U,V],W\rangle+\langle V,[U,W]\rangle=0\). Hence we get \begin{align*}
	0=&\langle D_uW,D_uW\rangle+\langle D_vW,D_vW\rangle\\
	+&\frac{1}{4}\langle [D_uf,W],[D_uf,W]\rangle+\frac{1}{4}\langle [D_vf,W],[D_vf,W]\rangle \\
	+&\langle [D_uf,W],D_vW\rangle-\langle [D_vf,W],D_uW\rangle\\
	=&\|D_uW-\frac{1}{2}[D_vf,W]\|^2+\|D_vW+\frac{1}{2}[D_uf,W]\|^2
	\end{align*} Since all terms in the final line are non-negative, they must vanish. Hence \(D_uW=\frac{1}{2}[D_vf,W]\) and \(D_vW=-\frac{1}{2}[D_uf,W]\). Now, letting \((\,,)\) denote the pointwise inner product on \(T(G/K)\), we have \[\d_u( W,W)=2(D_uW,W)=([D_vf,W],W)=(D_vf,[W,W])=0,\] and similarly \(\d_v(W,W)=0\). It follows that \((W,W)\) is constant, and since \(W|_{\d D}=0\) we have \(W=0\).
\end{proof}

\begin{theorem}
	Let \((\X,\H)\) be a product-type fibration degeneration of \((X,H)\). Given any relatively cscK metric \(\omega_X\) on \(X\), there exists a unique smooth geodesic ray \(\varphi:[0,\infty)\to\K_E\) compatible with \((\X,\H)\).
\end{theorem}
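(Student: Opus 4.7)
The strategy mirrors the proof of Theorem \ref{thm:geodesic_existence}: first construct the ray fibrewise using Donaldson's WZW reformulation, then upgrade to smooth dependence on $b \in B$ via a parametric implicit function theorem argument, with Proposition \ref{thm:WZW_linearisation} as the key input.

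For each $b \in B$, the product-type hypothesis gives that $(\X_b, \H_b)$ is a product test configuration determined by a one-parameter subgroup $\rho_b: \C^* \to \Aut(X_b, H_b)$, and the local trivialisations over $B$ ensure that $\rho_b$ varies smoothly with $b$. Donaldson's proposition then furnishes a unique smooth $f_b: D \to G_b/K_b$ solving the WZW equation \eqref{eq:WZW} with boundary values $K_b \cdot \rho_b(e^{i\theta})$, and hence a unique smooth fibrewise geodesic ray $\varphi_b: [0,\infty) \to \K_{E,b}$ starting at $0$.

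To establish smooth $b$-dependence of $f_b$, I would work on the Banach fibre bundle over $B$ whose fibre is $C^{r,\alpha}(D, H_b)$, where $H \to B$ is the smooth bundle of symmetric spaces constructed in Section \ref{sec:geodesic_equation}. The WZW operator together with the boundary restriction assembles into a smooth morphism of Banach bundles whose fibrewise derivative at $f_b$ is an isomorphism by Proposition \ref{thm:WZW_linearisation}. The parametric implicit function theorem then produces a smooth section $b \mapsto f_b$ of finite regularity, and elliptic bootstrapping on the WZW equation (with smooth boundary data) upgrades this to $C^\infty$. Composing with the universal embedding $H \hookrightarrow \K_0$ from Theorem \ref{prop:universal_property} yields the desired smooth function $\Phi(x,\tau) = \varphi_{-\log|\tau|}(x)$ on $X \times D^\times$, giving the geodesic ray $\varphi: [0,\infty) \to \K_E$.

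Compatibility with $(\X,\H)$ then reduces, via product-type trivialisations $(\X|_{U \times \C}, \H|_{U \times \C}) \cong (X|_U \times \C, p^* H|_U)$, to the classical statement that the metric of a product test configuration extends smoothly across the central fibre. For uniqueness, two compatible rays $\varphi, \varphi'$ must agree fibrewise by Donaldson's uniqueness, so their difference is pulled back from $B$; the geodesic equation $\ddot\varphi_t = |\d_\V \dot\varphi_t|^2_{\V,t}$ then forces this difference to have the form $tf + g$ with $f, g \in C^\infty(B)$, the boundary condition $\Phi|_{\d\X} = 0$ kills $g$, and the requirement that $h_\X$ extend smoothly across $\tau = 0$ --- in the face of the logarithmic growth $t = -\log|\tau|$ --- forces $f = 0$. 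The main obstacle is the parametric implicit function step: although Proposition \ref{thm:WZW_linearisation} provides pointwise invertibility of the linearisation, one needs uniform regularity of its inverse in $b$, which should follow from the smooth dependence of the data on $b$ together with elliptic estimates for the WZW operator.
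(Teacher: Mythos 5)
Your proposal is correct and takes essentially the same route as the paper: trivialise the product-type degeneration locally over \(B\), solve the WZW equation fibrewise using Donaldson's result, and obtain smooth dependence on \(b\) from Proposition \ref{thm:WZW_linearisation} combined with the implicit function theorem on Banach bundles of maps into the bundle of symmetric spaces from Section \ref{sec:geodesic_equation}. Your additional care with H{\"o}lder regularity and bootstrapping, and your explicit uniqueness argument (ruling out the \(tf+g\) ambiguity via the boundary condition and the smooth positive extension of \(h_\X\) across \(\tau=0\)), merely fill in details the paper's proof leaves implicit.
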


\begin{proof}
	Over an open subset \(U\subset B\), we trivialise the fibration degeneration, \[(\X_{U\times\C},\H|_{U\times\C})\cong(X|_U\times\C,p^*H).\] On \(U\), construct the fibre bundles: \begin{enumerate}
		\item \(\mathcal{D}:=U\times D\),
		\item \(\d\mathcal{D}:=U\times\d D\subset\mathcal{D}\),
		\item \(\mathcal{G}/\mathcal{K}\) the smooth fibre bundle whose fibre over \(b\in U\) is the homogeneous space \(G_b/K_b\) of cscK metrics in \(c_1(H_b)\), as in Section \ref{sec:geodesic_equation}, and
		\item \(T(\mathcal{G}/\mathcal{K})\) the vertical tangent bundle of the fibre bundle \(\mathcal{G}/\mathcal{K}\).
	\end{enumerate} For any of these fibre bundles \(\mathcal{F}_1,\mathcal{F}_2\), denote by \(C^\infty_U(\mathcal{F}_1,\mathcal{F}_2)\) the smooth morphisms \(\mathcal{F}_1\to\mathcal{F}_2\) compatible with the projections to \(U\), which is also a fibre bundle over \(U\). 

The fibrewise WZW operator is then a smooth map \[\tilde{\tau}:C^\infty_U(\mathcal{D},\mathcal{G}/\mathcal{K})\to C^\infty_U(\mathcal{D},T(\mathcal{G}/\mathcal{K}))\times_U C^\infty_U(\d\mathcal{D},\mathcal{G}/\mathcal{K}).\] The fibrewise \(S^1\)-action on \((X|_U\times\C,p^*H)\) determined by the fibration degeneration yields a smooth section \(s:U\to C^\infty_U(\d \mathcal{D},\mathcal{G}/\mathcal{K})\), by pulling back the initial relatively cscK metric by the action. For each point \(b\in U\) there is then a unique solution \(f_b:D\to G_b/K_b\) of the WZW equation with boundary value \(s(b)\). Proposition \ref{thm:WZW_linearisation} together with the implicit function theorem for Banach manifolds implies the solutions \(f_b\) vary smoothly with \(b\).
\end{proof}

\subsection{Polystability with respect to product-type fibration degenerations}

\begin{theorem}\label{thm:N_limit}
	Let \((\X,\H)\to(B,L)\times\C\) be a product-type fibration degeneration of \((X,H)\to(B,L)\). Let \(\omega_X\) be a fixed choice of relatively cscK metric on \(X\), and let \(\varphi:[0,\infty)\to\K_E\) be the unique geodesic ray associated to \((\X,\H)\). Then \[\lim_{t\to\infty}\frac{\N(\varphi_t)}{t}=W_1(\X,\H).\]
\end{theorem}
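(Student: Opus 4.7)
My approach combines the Chen--Tian style formula from Theorem \ref{prop:Chen-Tian_fib} with the Deligne pairing reformulation in Proposition \ref{prop:Deligne_pairings}, reducing the computation of $\lim_{t\to\infty}\N(\varphi_t)/t$ to slope computations of hermitian metrics on Deligne pairing line bundles. These slopes are in turn evaluated by extending the family over $\P^1$ and extracting intersection numbers on $\overline{\X}$. The template for these arguments is \cite{BHJ19,Sjo18} in the non-relative K\"ahler setting.

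First I would use Theorem \ref{prop:Chen-Tian_fib} to write $\N(\varphi_t)$ as a linear combination (with coefficients $1,1,A_1(X),\frac{m}{m+2}A_0(X)$) of $\widetilde{H}(\varphi_t)+\widetilde{R}(\varphi_t)$, $\widetilde{I}(\varphi_t)$ and $\widetilde{J}(\varphi_t)$. Comparing with the formula for $W_1(\X,\H)$ from Proposition \ref{prop:W_1_int_theory}, the theorem reduces to the three slope identities
$$\lim_{t\to\infty}\frac{\widetilde{I}(\varphi_t)}{t}=(L^n\cdot\overline{\H}^{\,m+1}),\quad \lim_{t\to\infty}\frac{\widetilde{J}(\varphi_t)}{t}=(L^{n-1}\cdot\overline{\H}^{\,m+2}),$$
$$\lim_{t\to\infty}\frac{\widetilde{H}(\varphi_t)+\widetilde{R}(\varphi_t)}{t}=(L^{n-1}\cdot\overline{\H}^{\,m+1}\cdot K_{\overline{\X}/\P^1}).$$
The grouping $\widetilde{H}+\widetilde{R}$ is natural because the middle Deligne pairing term cancels between them (by Proposition \ref{prop:Deligne_pairings}), leaving $\langle \eta_{X,\varphi_t},h_B^{n-1},h_{X,\varphi_t}^{m+1}\rangle_X-\langle \eta_X,h_B^{n-1},h_X^{m+1}\rangle_X$, whose extension across $\tau=0$ will pick up the intersection with $K_{\overline{\X}/\P^1}$ via the metric $\eta$ on $-K_{X/B}$ coming from the relative volume form $\omega_X^m\wedge\omega_B^n$.

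Next I would implement each of the three slope computations by a single unified argument. By Proposition \ref{prop:Deligne_pairings} each $\widetilde{F}(\varphi_t)$ is a log-ratio of two continuous hermitian metrics on a fixed line bundle over $X$. The compatibility of the geodesic ray with the product-type fibration degeneration furnishes a smooth $S^1$-invariant hermitian metric on $\H|_D$, which together with the pullback of $h_B$ and the volume-form metric $\eta$ promotes $\widetilde{F}(\varphi_t)$ to a rotationally invariant continuous function on $\P^1$ (depending on $\tau=e^{-t}$), realised as the log-ratio of two metrics on the relative Deligne pairing line bundle $\langle p^*L^\bullet,\overline{\H}^\bullet,\ldots\rangle_{\overline{\X}/\P^1}$. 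The asymptotic slope at $\tau=0$ of this function equals the degree of the relative Deligne pairing line bundle on $\P^1$, by standard arguments via the Poincar\'e--Lelong formula applied to a holomorphic trivialising section of the Deligne pairing coming from the reference metric; this degree is precisely the intersection number on $\overline{\X}$ obtained by pushing forward the relevant Chern class product along $\overline{\X}\to\P^1$.

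The main obstacle I anticipate is the canonical-bundle term $\widetilde{H}+\widetilde{R}$. Since $\eta_{X,\varphi_t}$ is defined through the time-dependent volume form $\omega_{X,\varphi_t}^m\wedge\omega_B^n$ rather than a fixed smooth reference metric, one must verify that its $\P^1$-extension defines a metric on $-K_{\overline{\X}/\P^1}$, up to boundary corrections that can be absorbed into the compactification. Here the product-type hypothesis is essential: it provides local holomorphic product models near $\tau=0$ in which the volume-form metric extends continuously across the central fibre, and one can transport fibrewise over $B$ the standard regularity and extension results of \cite{BHJ19,Sjo18}. A secondary concern is justifying the interchange of the $t\to\infty$ limit with the integration over $B$ that is implicit in the relative Deligne pairings; because $B$ is compact and the metrics we deal with are uniformly bounded in a suitable sense, this should follow from a standard dominated convergence argument.
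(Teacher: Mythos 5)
Your proposal is correct and follows essentially the same route as the paper's proof: apply the Chen--Tian style formula (Theorem \ref{prop:Chen-Tian_fib}), rewrite \(\widetilde{H}+\widetilde{R}\), \(\widetilde{I}\), \(\widetilde{J}\) via Deligne pairings (Proposition \ref{prop:Deligne_pairings}), compute the slopes as intersection numbers on \(\overline{\X}\), and match them term by term against the formula for \(W_1(\X,\H)\) in Proposition \ref{prop:W_1_int_theory}. The only difference is presentational: where you sketch the Poincar\'e--Lelong slope argument and worry about extending the volume-form metric across the central fibre, the paper simply cites Lemma \ref{lem:Deligne_limit} (\cite[Lemma 3.9]{BHJ19}) and notes that for product-type degenerations \(K_{\X/D}\) is a genuine line bundle carrying the smooth metric \(\eta_\X\) determined by \(h_\X\), so the limit is immediate.
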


To prove this, we will need the following lemma, which is a special case of a result by Boucksom--Hisamoto--Jonsson:

\begin{lemma}[{\cite[Lemma 3.9]{BHJ19}}]\label{lem:Deligne_limit}
	Let \((\X,\H)\) be a product-type fibration degeneration of \((X,H)\). Let \(\L_0,\ldots,\L_{m+n}\) be line bundles on \(\X\) with restrictions \(L_0,\ldots,L_{m+n}\) to \(X=\X_1\), and choose fixed hermitian metrics \(h_0,\ldots,h_{m+n}\) on \(L_0,\ldots,L_{m+n}\). We compactify \(\X\) and each of the \(\L_j\) trivially over \(\P^1\), and denote them by the \(\overline{\X}\) and \(\overline{\L}_j\). For each \(j\), let \(h_j\) be a smooth \(S^1\)-invariant metric on \(\L_j\) defined in a neighbourhood of \(0\in\C\), and write \(h_j^t\) for the corresponding ray of smooth metrics on \(L_j\). Then \[\langle h_0^t,\ldots,h_{m+n}^t\rangle_X-\langle h_0,\ldots,h_{m+n}\rangle_X=t\,(\overline{\L}_0\cdot\ldots\cdot\overline{\L}_{m+n})+\O(1).\]
\end{lemma}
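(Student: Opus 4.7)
The plan is to reinterpret the ray of Deligne pairings on \(X\) as the evaluation of a smooth hermitian metric on a line bundle over \(\P^1\), whose degree is exactly the intersection number asserted in the lemma.

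First, by the functoriality of Deligne pairings under base change, the family of fibrewise pairings assembles into a line bundle \(\mathcal{M} := \langle \overline{\L}_0, \ldots, \overline{\L}_{m+n}\rangle_{\overline{\X}/\P^1}\) on \(\P^1\), equipped with a continuous hermitian metric \(H_{\mathrm{Del}}\) induced by the \(H_i\); this metric is smooth on the smooth locus of \(\overline{\X} \to \P^1\), which includes a neighbourhood of \(0 \in \P^1\) by the product-type assumption. The curvature formula
\[c_1(\mathcal{M}, H_{\mathrm{Del}}) = (\pi_{\P^1})_*\bigl(c_1(\overline{\L}_0, H_0) \wedge \cdots \wedge c_1(\overline{\L}_{m+n}, H_{m+n})\bigr),\]
combined with integration over \(\P^1\), identifies \(\deg \mathcal{M} = \overline{\L}_0 \cdot \ldots \cdot \overline{\L}_{m+n}\). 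For each \(\tau \in \C^*\), the \(\C^*\)-action provides a canonical isomorphism \(X \xrightarrow{\sim} \X_\tau\) under which, by the \(S^1\)-invariance of \(H_i\), the pullback of \(H_i|_{\X_\tau}\) coincides with the ray metric \(h_i^t\) for \(t = -\log|\tau|\). Functoriality of Deligne pairings then yields a matching of \(\langle h_0^t, \ldots\rangle_X\) with \(H_{\mathrm{Del}}|_\tau\) under the induced isomorphism \(\mathcal{M}|_\tau \cong \langle L_0, \ldots, L_{m+n}\rangle_X\), so that tracking the ray of Deligne pairings on \(X\) amounts to tracking \(H_{\mathrm{Del}}\) along the \(\C^*\)-orbit, twisted by the \(\C^*\)-equivariant structure on \(\mathcal{M}\).

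Second, the asymptotic slope is computed by differentiating and integrating. The change-of-metric formula \eqref{eq:change_of_metric}, applied infinitesimally in each slot, yields
\[\frac{d}{dt}\log\langle h_0^t, \ldots, h_{m+n}^t\rangle_X = \sum_{i=0}^{m+n} \int_X \dot{\phi}_i^t \, \bigwedge_{j \ne i} c_1(L_j, h_j^t),\]
where \(\dot{\phi}_i^t = \frac{d}{dt} \log h_i^t\). Integrating in \(t\) from \(0\) to \(T\) and using \(S^1\)-invariance together with the change of variables \(\tau = e^{-t + i\theta}\) (so that \(dt \wedge d\theta = \tfrac{1}{2i}|\tau|^{-2}\, d\tau \wedge d\bar\tau\)), the iterated integral transforms into an integral over the annulus \(\X|_{\{e^{-T} \le |\tau| \le 1\}} \subset \X|_D\) of the top-degree form \(c_1(\L_0, H_0) \wedge \cdots \wedge c_1(\L_{m+n}, H_{m+n})\). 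As \(T \to \infty\) this exhausts \(\X|_D\); the complementary contribution from \(\overline{\X} \setminus \X|_D\) (including the trivially compactified fibre at \(\tau = \infty\)) is finite, and the total integral over \(\overline{\X}\) equals the intersection number \(\overline{\L}_0 \cdots \overline{\L}_{m+n}\) by the de Rham representation of Chern classes. Thus
\[\log\langle h_0^T, \ldots\rangle_X - \log\langle h_0, \ldots\rangle_X = T \cdot (\overline{\L}_0 \cdots \overline{\L}_{m+n}) + \O(1),\]
as desired.

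The main technical obstacle is making precise the change of variables and controlling the boundary contributions. The \(S^1\)-invariance of the \(H_i\) is essential here: it guarantees that \(\dot{\phi}_i^t\) depends only on \(|\tau|\), and that \(\dot{\phi}_i^t\, dt\) lifts naturally to a differential form on \(\X|_D\) compatible with the \(S^1\)-action, enabling the rewriting of the iterated integral as a surface integral on \(\X|_D\). The boundedness of the \(\O(1)\) remainder hinges on the smoothness of \(H_{\mathrm{Del}}\) near \(\tau = 0\) (ensured by the product-type hypothesis, which makes \(\overline{\X} \to \P^1\) smooth there) and near \(\tau = \infty\) (ensured by the trivial compactification). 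An alternative, more conceptual route expresses the asymptotic slope directly via the \(\C^*\)-weight of \(\mathcal{M}\) at the fixed point \(\tau = 0\), which by the degree-weight formula (with \(w_\infty = 0\) forcing \(|w_0| = \deg \mathcal{M}\)) yields the same leading coefficient, provided one is careful with the sign and normalisation conventions for the \(\C^*\)-action on Deligne pairings.
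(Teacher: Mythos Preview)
The paper does not supply its own proof of this lemma; it simply quotes \cite[Lemma~3.9]{BHJ19} as a black box. So there is no ``paper's proof'' to compare against, only the original BHJ argument.

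Your first paragraph and the ``alternative, more conceptual route'' you sketch at the end are correct and are precisely the BHJ argument: form the Deligne pairing line bundle \(\mathcal{M}=\langle\overline{\L}_0,\ldots,\overline{\L}_{m+n}\rangle_{\overline{\X}/\P^1}\), observe that \(\deg\mathcal{M}\) is the intersection number, trivialise \(\mathcal{M}\) over \(\C^*\) equivariantly using the trivial compactification at \(\infty\), and then use smoothness of \(H_{\mathrm{Del}}\) at \(0\) (product-type) and at \(\infty\) (trivial compactification) to see that the trivialising section vanishes to order \(\deg\mathcal{M}\) at \(0\), giving \(F(t)=t\cdot\deg\mathcal{M}+O(1)\). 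That is a complete proof.

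The second paragraph, however, contains a genuine error. You claim that the iterated integral
\[
F(T)=\int_0^T\sum_i\int_X\dot\phi_i^t\,\bigwedge_{j\neq i}c_1(L_j,h_j^t)\,dt
\]
transforms, under \(\tau=e^{-t+i\theta}\), into \(\int_{\X|_{\{e^{-T}\le|\tau|\le 1\}}}\Omega_0\wedge\cdots\wedge\Omega_{m+n}\). It does not. By the pushforward property of Deligne metrics, the annulus integral of \(\Omega_0\wedge\cdots\wedge\Omega_{m+n}\) equals \(\int_{\text{ann}(T)}c_1(\mathcal{M},H_{\mathrm{Del}})\), and in your \(\C^*\)-trivialisation this curvature is \(-i\partial\bar\partial F\). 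A Stokes computation (using \(S^1\)-invariance) shows that the annulus integral of \(i\partial\bar\partial F\) is proportional to \(\dot F(T)-\dot F(0)\), not to \(F(T)-F(0)\). In particular the annulus integral is \emph{bounded} as \(T\to\infty\) (it converges to \(\int_D c_1(\mathcal{M})\)), whereas \(F(T)\) grows linearly. So the chain of reasoning ``iterated integral \(=\) annulus integral \(\to\) finite number \(\Rightarrow F(T)=Td+O(1)\)'' is internally inconsistent: the middle quantity is bounded, yet you conclude linear growth.

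The fix is simply to drop the second paragraph and promote your ``alternative route'' to the main argument: the \(\C^*\)-equivariant section of \(\mathcal{M}\) that extends holomorphically and nonvanishingly over \(\infty\) must vanish to order \(\deg\mathcal{M}\) at \(0\); smoothness of \(H_{\mathrm{Del}}\) there gives \(\log|s|^2_{H_{\mathrm{Del}}}=2(\deg\mathcal{M})\log|\tau|+O(1)\), which is exactly the statement.
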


\begin{proof}[Proof of Theorem \ref{thm:N_limit}]
	Let \(\varphi:[0,\infty)\to\K_E\) be the unique geodesic ray associated to the fibration degeneration; this corresponds to a smooth hermitian metric \(h_\X\) on \(\H|_D\) whose restriction to \(\H|_{\d D}\) is isomorphic to \(p^*h_X\). Denote by \(h_X^t\) the ray of smooth metrics on \(X\) determined by \(h_\X\). Then \[\varphi_t=\log(h_X^t/h_X)\] is a smooth geodesic ray in \(\K_E\). The metric \(h_\X\) also determines a smooth metric \(\eta_\X\) on the relative canonical bundle \(K_{\X/D}\), which is a line bundle in the case of product-type fibration degenerations. We then obtain a ray of smooth metrics \(\eta_X^t\) on \(K_X\). Proposition \ref{prop:Deligne_pairings} then gives \begin{align*}
	\widetilde{H}(\varphi_t)+\widetilde{R}(\varphi_t) &= \langle \eta_{X}^t,h_B^{n-1},(h_{X}^t)^{m+1}\rangle_{X}-\langle \eta_X,h_B^{n-1},h_{X}^{m+1}\rangle_{X}, \\
	\widetilde{I}(\varphi_t) &= \langle h_B^{n},(h_{X}^t)^{m+1}\rangle_{X}-\langle h_B^{n},h_{X}^{m+1}\rangle_{X}, \\
	\widetilde{J}(\varphi_t) &=
	\langle h_B^{n-1},(h_{X}^t)^{m+2}\rangle_{X}-\langle h_B^{n-1},h_{X}^{m+2}\rangle_{X}.
	\end{align*} Lemma \ref{lem:Deligne_limit} together with the Chen--Tian style formula Theorem \ref{prop:Chen-Tian_fib} and the intersection theory formula Proposition \ref{prop:W_1_int_theory} for \(W_1(\X,\H)\) then immediately imply the result. For example, applying Lemma \ref{lem:Deligne_limit} to the formula for \(\tilde{J}(\varphi_t)\) in terms of Deligne pairings, we get \[\lim_{t\to0}\frac{\widetilde{J}(\varphi_t)}{t}=L^{n-1}\cdot \H^{m+2}.\] From the Chen--Tian formula for \(\mathcal{N}\), we have \[\N(\varphi_t)=\binom{m+n}{n-1}\left(\widetilde{H}(\varphi_t)+\widetilde{R}(\varphi_t)+\frac{1}{n}A_1(X,H)\widetilde{I}(\varphi_t)+\frac{1}{m+2}A_0(X,H)\widetilde{J}(\varphi_t)\right),\] and from Proposition \ref{prop:W_1_int_theory}, \[W_1(\X,\H)=\binom{m+n}{n-1}(C_1(\X,\H)+C_2(\X,\H)+C_3(\X,\H)),\] where \begin{align*}
		C_1(\X,\H) &= \frac{1}{m+2}A_0(X,H) (L^{n-1}\cdot\overline{\H}^{\,m+2}), \\
		C_2(\X,\H) &= \frac{1}{n}A_1(X,H)(L^n\cdot\overline{\H}^{\,m+1}), \\
		C_3(\X,\H) &= (L^{n-1}\cdot\overline{\H}^{\,m+1}\cdot K_{\overline{\X}/\P^1}).
	\end{align*} It follows that \[\frac{1}{m+2}A_0(X,H)\frac{\widetilde{J}(\varphi_t)}{t}\to C_1(\X,\H)\] as \(t\to\infty\). Applying this same argument to \(\widetilde{H}(\varphi_t)+\widetilde{R}(\varphi_t)\) and \(\widetilde{I}(\varphi_t)\), and comparing terms in the Chen--Tian formula to terms in \(W_1(\X,\H)\), we get \[\lim_{t\to\infty}\frac{\mathcal{N}(\varphi_t)}{t}=W_1(\X,\H).\qedhere\]
\end{proof}

\begin{theorem}
	Let \((X,H)\to(B,L)\) be a fibration that admits an optimal symplectic connection in the class \(c_1(H)\), and let \((\X,\H)\) be a fibration degeneration of \((X,H)\) that is product-type. Then \(W_0(\X,\H)\geq0\), and if \(W_0(\X,\H)=0\) then \(W_1(\X,\H)\geq0\) with equality if and only if \((\X,\H)\) is a product fibration degeneration.
\end{theorem}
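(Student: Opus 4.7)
The bound $W_0(\X,\H)\geq 0$ is immediate: each fibre $(X_b,H_b)$ admits a cscK metric by hypothesis and is hence K-polystable by the results cited at the start of the section, so Proposition \ref{prop:flat_locus} gives $W_0(\X,\H)=\binom{m+n}{n}L^n\cdot\DF(\X_b,\H_b)\geq 0$. For the remaining statements I fix the reference metric $\omega_X$ to be an optimal symplectic connection (which exists by hypothesis). By the boundedness theorem of Section \ref{functional}, $\N\geq 0$ on $\K_E$, with $\N(0)=0$. If $W_0(\X,\H)=0$, the combination of K-polystability of the fibres and the product-type hypothesis makes all fibres $(\X_b,\H_b)$ product test configurations, so I may associate to $(\X,\H)$ its unique smooth compatible geodesic ray $\varphi:[0,\infty)\to\K_E$, normalised by $\varphi_0=0$.

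The function $t\mapsto\N(\varphi_t)$ is convex by Theorem \ref{thm:fib_convexity}, vanishes at $t=0$, and is nonnegative. A standard convexity argument then shows the secant slope $\N(\varphi_t)/t$ is nondecreasing in $t$ and always nonnegative. Its limit as $t\to\infty$ equals $W_1(\X,\H)$ by Theorem \ref{thm:N_limit}, giving $W_1(\X,\H)\geq 0$. Now suppose $W_1(\X,\H)=0$. Since $\N(\varphi_t)/t$ is nondecreasing with limit $0$ and is always nonnegative, it must vanish identically, so $\N(\varphi_t)\equiv 0$ along the geodesic ray and $\N$ is (trivially) affine along $\varphi$. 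The equality clause of Theorem \ref{thm:fib_convexity} then supplies $u\in C^\infty(E)$ and $f\in C^\infty(B)$ with $v:=\nabla_\V^{1,0}u$ a globally holomorphic vector field on $X$, such that $\varphi_t=\psi_t+tf$ where $\psi_t=\int_0^t\exp(sv)^*u\,ds$.

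It remains to upgrade this to a statement about the fibration degeneration itself. The vector field $v$ automatically lies in $\mathrm{Lie}(\Aut_0(\pi))$ since $u$ is a fibrewise holomorphy potential, and it integrates to an honest $\C^*$-action on $(X,H)$ by matching it with the $S^1$-action inherited from the $\C^*$-action on $(\X,\H)$ via the compatibility of $\varphi$. This $\C^*$-action defines a product fibration degeneration $(X\times\C,p^*H)_v$, whose compatible geodesic ray is precisely $\psi_t+tf$ (the $tf$ summand being absorbed into a base-pullback modification of the hermitian metric on the compactification). By uniqueness of the compatible geodesic ray, $(\X,\H)$ must coincide with $(X\times\C,p^*H)_v$, i.e., it is a product fibration degeneration. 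The main obstacle in the argument is precisely this last identification: extracting a global $\C^*$-action on the total space $\X$ from the geodesic ray data on $X$, which a priori only recovers the action on $X=\X_1$ via the local product-type structure.
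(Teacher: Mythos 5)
Your treatment of $W_0(\X,\H)\geq0$ via K-polystability of the fibres and Proposition \ref{prop:flat_locus}, the nonnegativity $W_1(\X,\H)\geq0$ from convexity of $\N$ along the compatible geodesic ray together with criticality of the optimal symplectic connection and the slope formula of Theorem \ref{thm:N_limit}, and the deduction in the equality case that $\N$ is affine along the ray so that $\varphi_t=\psi_t+tf$ with $v=\nabla^{1,0}_\V u$ globally holomorphic, all match the paper's argument essentially step for step. The genuine gap is in the final identification — the very step you flag as the main obstacle. You conclude that $(\X,\H)$ coincides with the product fibration degeneration $(X\times\C,p^*H)_v$ ``by uniqueness of the compatible geodesic ray,'' but the uniqueness established in this paper runs in the opposite direction: each product-type fibration degeneration admits a \emph{unique} compatible geodesic ray. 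That makes the map from degenerations to rays well defined; it says nothing about injectivity. Nothing in your argument excludes two non-isomorphic degenerations sharing the same compatible ray, and ruling this out is precisely the content of the step you need.

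The paper closes this gap with an external input: Sj\"ostr\"om Dyrefelt's result \cite[Lemma 3.11]{Sjo20}. The geodesic ray determines a subgeodesic ray on the test configuration $(\X,kL+\H)$ for all $k$ sufficiently large, and that lemma asserts that at most one \emph{normal} test configuration is compatible with a given subgeodesic ray. Since both $(\X,kL+\H)$ and the product test configuration induced by $v$ are normal (a product-type degeneration has smooth, hence normal, total space) and are compatible with the same ray, they are isomorphic, whence $(\X,\H)$ is a product fibration degeneration. Your proposal needs this lemma, or an equivalent injectivity statement for the ray-to-degeneration correspondence, to go through; as written, ``ray determines degeneration'' is a non sequitur. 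A secondary and more minor point: your claim that $v$ integrates to a genuine $\C^*$-action (rather than merely an $\R$-flow in $\Aut_0(\pi)$) rests on matching it with the $S^1$-boundary data of the WZW solution coming from the degeneration; this is glossed at the same level of detail in the paper, so it is not a defect relative to the paper's proof, but it would merit a sentence in a complete write-up.
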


\begin{proof}
	We may assume that \(W_0(\X,\H)=0\), since the fibres \((X_b,H_b)\) are K-polystable. Noting that any optimal symplectic connection is a critical point of the functional \(\N\), the fact that \(W_1(\X,\H)\geq0\) follows from the previous theorem and convexity of the functional \(\N\) along geodesics.
	
	If \(W_1(\X,\H)=0\) then the geodesic ray \(\varphi_t\) must originate from a one-parameter subgroup of \(\Aut_0(\pi)\), by Theorem \ref{thm:fib_convexity}. The fibration degeneration is then isomorphic to the product fibration degeneration determined by the one-parameter subgroup. This can be seen by the result \cite[Lemma 3.11]{Sjo20}; note that in Sj{\"o}str{\"o}m Dyrefelt's terminology, our geodesic ray determines a subgeodesic ray on the test configuration \((\X,kL+\H)\) for all \(k\) sufficiently large. The result then says that there can be at most one normal test configuration compatible with our subgeodesic ray, and hence it must be the induced product test configuration.
\end{proof}

\begin{corollary}
	Let \((X,H)\to(B,L)\) be a fibration whose fibres \(X_b\) are all isomorphic as complex manifolds. If \((X,H)\to(B,L)\) admits an optimal symplectic connection in \(c_1(H)\), then \((X,H)\) is polystable with respect to fibration degenerations that are smooth over \(B\times\C\).
\end{corollary}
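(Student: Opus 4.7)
The plan is to reduce polystability with respect to fibration degenerations smooth over $B\times\C$ to the already established polystability with respect to product-type fibration degenerations (the preceding theorem), using Lemmas \ref{lem:a} and \ref{lem:b} as the bridge.

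Let $(\X,\H)$ be an arbitrary fibration degeneration of $(X,H)$ that is smooth over $B\times\C$. First I would dispose of the case $W_0(\X,\H)>0$, which is immediate from the definition of semistability. So assume $W_0(\X,\H)=0$. Under this hypothesis, Lemma \ref{lem:b} applies (the smoothness assumption guarantees that every fibre $\X_b$ is normal and flatness of $\X\to B\times\C$ together with constancy of the Donaldson--Futaki invariant in flat families forces $\DF(\X_b,\H_b)=0$ for \emph{every} $b\in B$, which by K-polystability of the fibres $(X_b,H_b)$ makes each $(\X_b,\H_b)$ a genuine product test configuration). Next, the isotriviality assumption is precisely what is needed to invoke Lemma \ref{lem:a}, which then upgrades the fibrewise product structure to a local product structure over $B\times\C$. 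Hence $(\X,\H)$ is a product-type fibration degeneration.

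Once $(\X,\H)$ is known to be product-type, the preceding theorem delivers the remaining conclusions of polystability for free: it gives $W_1(\X,\H)\ge 0$ in the case $W_0(\X,\H)=0$, and it characterises the equality case $W_1(\X,\H)=0$ as forcing $(\X,\H)$ to be a product fibration degeneration (in particular, to normalise to one). This matches the definition of polystability exactly, with the ``minimum norm zero'' escape clause not even needed here.

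I do not anticipate any serious obstacle: this corollary is essentially a bookkeeping consequence of the preceding theorem once the two structural lemmas are in hand. The one place I would double-check carefully is the application of Lemma \ref{lem:b}: it is important that smoothness over $B\times\C$ is being used both to ensure the normality of all fibres and to ensure flatness of the family, so that the Donaldson--Futaki invariant is genuinely constant. Beyond that, the argument is a direct citation chain through (previous theorem) $\Leftarrow$ (Lemma \ref{lem:a}) $\Leftarrow$ (Lemma \ref{lem:b}).
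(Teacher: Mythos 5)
Your proposal is correct and matches the paper's proof exactly: the paper disposes of this corollary as an immediate consequence of the preceding theorem together with Lemmas \ref{lem:a} and \ref{lem:b}, which is precisely the citation chain you spell out. Your expanded account of how smoothness over \(B\times\C\) feeds into Lemma \ref{lem:b} (normality of fibres plus flatness giving constancy of the Donaldson--Futaki invariant) is a faithful unpacking of what the paper leaves implicit, not a different route.
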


\begin{proof}
	This follows immediately from the previous theorem, together with Lemmas \ref{lem:a} and \ref{lem:b}.
\end{proof}

\begin{remark}\label{rem:Kahler_case}
	We have chosen here to work in the algebraic setting, however the results carry to the K{\"a}hler setting with minimal effort. Namely, in place of Lemma \ref{lem:Deligne_limit}, we instead use the result of Sj{\"o}str{\"o}m Dyrefelt \cite[Theorem 3.6]{Sjo18}, and work with K{\"a}hler forms instead of hermitian metrics.
\end{remark}

\subsection{Examples of product-type fibration degenerations}

We end by giving a large class of examples of product-type fibration degenerations. \begin{example}\label{ex:projectivised_vector_bundle}
	First, consider the case where \(X=\P(V)\) is a projectivised vector bundle over \(B\). If \(W\subset V\) is a sub-vector bundle, then there is a degeneration of vector bundles over \(B\) from \(V\) to \(W\oplus(V/W)\). That is, there is a sheaf \(\mathcal{V}\) on \(B\times\C\) that is flat over \(\C\), and whose general fibre over \(\C\) is the bundle \(V\) and whose central fibre is \(W\oplus(V/W)\). This degeneration of vector bundles naturally induces a fibration degeneration of \(X\) simply by taking \(\X:=\P(\mathcal{V})\) and \(\H:=\mathcal{O}_{\P(\mathcal{V})}(1)\). More generally, one could take a flag \(0=W_0\subset W_1\subset\cdots\subset W_k=V\) of vector subbundles, and this will similarly induce a product-type degeneration.
\end{example} 

\begin{example}[{McCarthy \cite[Chapter 7.2]{McC22}}]\label{ex:principal_bundle_fibration}
	Let \(Q\to B\) be a holomorphic principal \(G\)-bundle, where \(G\) is a reductive group, let \((Y,L_Y)\) be a polarised variety, and let \(\rho:G\to\Aut(Y,L_Y)\) be a representation. Letting \(X:=Q\times_\rho Y\) be the associated bundle with induced relatively ample line bundle \(H:=Q\times_\rho L_Y\), we have a fibration \((X,H)\to (B,L)\) all of whose fibres are isomorphic to \((Y,L_Y)\). 
	
	Recall a subgroup \(P\subset G\) is \emph{parabolic} if it contains a maximal solvable subgroup, called a \emph{Borel subgroup}. Parabolic subgroups can be described in the following way: a subgroup \(P\subset G\) is parabolic if and only if there exists a one-parameter subgroup \(\lambda:\C^*\to P\) such that \[P=\{g\in G:\lim_{t\to0}\lambda(t)g\lambda(t)^{-1}\text{ exists}\}.\] In concrete terms, we can always embed the reductive group \(G\) in some \(\GL(N,\C)\), such that \(P\) consists of matrices in \(G\) of the form \[
	\begin{pmatrix}
	A_1 & * & * & \cdots & * \\
	0 & A_2 & * & \cdots & * \\
	0 & 0 & A_3 & \cdots & * \\
	\vdots & \vdots & \vdots & \ddots & \vdots \\
	0 & 0 & 0 & \cdots & A_k
	\end{pmatrix}
	\] where the \(A_j\) are invertible square matrices of fixed size \(a_j\times a_j\), the \(*\)'s denote arbitrary matrices of the appropriate sizes, and \(\sum_{j=1}^ka_j=N\). The one-parameter subgroup \(\lambda\) can be taken to be of the form \[\lambda(t):=\begin{pmatrix}
	t^{r_1}I_{a_1} &0&0&\cdots&0\\
	0&t^{r_2}I_{a_2}&0&\cdots&0 \\
	0&0&t^{r_3}I_{a_3}&\cdots&0 \\
	\vdots&\vdots&\vdots&\ddots&\vdots \\
	0&0&0&\cdots&t^{r_k}I_{a_k}
	\end{pmatrix}\] where \(r_1>r_2>\cdots >r_k\) and \(I_{a_j}\) is the identity matrix of size \(a_j\). Writing \(L\) for the set of block-diagonal matrices with block sizes \((a_1,\ldots,a_k)\), and writing \(U\) for the set of strictly upper block triangular matrices in \(P\) with diagonal entries all \(1\), we have \(P=U\rtimes L\). The subset \(U\) is the \emph{unipotent radical} of \(P\), and is independent of the choice of \(\lambda\). The reductive subgroup \(L\) is called a \emph{Levi subgroup} of \(P\); it depends on the choice of \(\lambda\) but is unique up to conjugation.
	
	Given a parabolic subgroup \(P\subset G\), a \emph{reduction of the structure group} of \(Q\) to \(P\) is by definition a choice of open cover \(\{U_\alpha\}\) of \(B\) together with holomorphic transition functions \(g_{\alpha\beta}:U_{\alpha}\cap U_\beta\to P\) for the principal bundle \(Q\) which take values in \(P\). Such reductions notably arise in the definition of stability for principal bundles, given in \cite{AB01}. Given a reduction of structure group to \(P\), we can define a degeneration of the principal bundle as follows. On \(B\times\C\), take the open cover \(\{U_\alpha\times\C\}\), and define transition functions \(\tilde{g}_{\alpha\beta}:(U_\alpha\cap U_\beta)\times\C\to P\) by \[\tilde{g}_{\alpha\beta}(b,t):=\begin{cases}
	\lambda(t)g_{\alpha\beta}(b)\lambda(t)^{-1} & \text{ for } t\neq0, \\
	\lim_{t\to0}\lambda(t)g_{\alpha\beta}(b)\lambda(t)^{-1} & \text{ for }t=0.
	\end{cases}\] Let \(\mathcal{Q}\) be the principal \(P\)-bundle on \(B\times\C\) deterimined by these transition functions. The fibre of \(\mathcal{Q}\) over \(t\neq0\) isomorphic to the original principal bundle \(Q\), however the fibre over \(t=0\) is a principal \(P\)-bundle \(Q_0\) that may be different to \(Q\). Note that \(Q_0\) comes equipped with a reduction of the structure group from \(P\) to \(L\).  
	
	Taking the associated bundle \((\X,\H):=\mathcal{Q}\times_\rho(Y,L_Y)\), we obtain a fibration degeneration of \((X,H)\) that is of product-type. To write down the \(\C^*\)-action on this fibration degeneration, let us trivialise \(\mathcal{Q}\) over \(U_\alpha\times\C\), so that \(\mathcal{Q}|_{U_\alpha\times\C}\cong U_\alpha\times\C\times P\). An element of \((\X,\H)|_{U_\alpha\times\C}\) can then be written as \((b,t,[p,(y,\ell)])\), where \(b\in U_\alpha\), \(t\in\C\), and \([p,(y,\ell)]\) is an equivalence class in the Cartesian product \(P\times(Y,L_Y)\). The \(\C^*\)-action is defined by \[s\cdot(b,t,[p,(y,\ell)]):=(b,st,[\lambda(t)p\lambda(t)^{-1},\rho(\lambda(t))(y,\ell)]).\] It is straightforward to check that these actions respect the transition functions \(\tilde{g}_{\alpha\beta}\), and so glue to a well-defined \(\C^*\)-action.
\end{example}

We remark that any fibration whose fibres are all isomorphic arises from a holomorphic principal bundle; this is essentially a consequence of the Fischer--Grauert theorem \cite{FG65}, which states that a proper holomorphic fibration whose fibres are all isomorphic is locally trivial. For more information on fibrations associated to principal bundles, see the PhD thesis of J. McCarthy \cite{McC22}.

\bibliographystyle{plain}
\bibliography{polystability}

\end{document}